\newtheorem{theorem}{Theorem}[section]
\newtheorem{lemma}[theorem]{Lemma}
\newtheorem{definition}[theorem]{Definiton}
\newtheorem{corollary}[theorem]{Corollary}
\theoremstyle{definition}
\newenvironment{remark}
  {\pushQED{\qed}\remx}
  {\popQED\endremx}
\newcommand{\la}{\langle}
\newcommand{\ra}{\rangle}
\newcommand{\N}{\mathbb{N}}
\newcommand{\R}{\mathbb{R}}
\newcommand{\T}{\mathbb{T}}
\newcommand{\ld}{\lambda}
\newcommand{\vare}{\varepsilon}
\newcommand{\pt}{\partial}
\newcommand{\bg}{\Big}
\newcommand{\mH}{{E}}
\newcommand{\mK}{{Q}}
\newcommand{\mS}{{S}}
\newcommand{\mI}{{I}}
\newcommand{\Z}{{\mathbb{Z}}}
\newcommand{\mM}{{M}}
\newcommand{\dxy}{\Delta_{x,y}}
\newcommand{\wmM}{\widehat{{M}}}
\newcommand{\wmS}{\widehat{{S}}}
\newcommand{\wmH}{\widehat{{E}}}
\newcommand{\wmI}{\widehat{{I}}}
\newcommand{\wmK}{\widehat{{Q}}}
\newcommand{\wm}{\widehat{m}}
\newcommand{\ba}{\mathbf{a}}
\newcommand{\bb}{\mathbf{b}}
\newcommand{\br}{\mathbf{r}}
\newcommand{\bs}{\mathbf{s}}
\DeclareMathOperator*{\ima}{Im}
\DeclareMathOperator*{\rea}{Re}
\numberwithin{equation}{section}
\begin{document}

\address{Luigi Forcella
\newline \indent  Department of Mathematics
\newline \indent University of Pisa, Italy}
\email{luigi.forcella@unipi.it}

\address{Yongming Luo
\newline \indent
Faculty of Computational Mathematics and Cybernetics
\newline \indent Shenzhen MSU-BIT University, China}
\email{luo.yongming@smbu.edu.cn}

\address{Zehua Zhao
\newline \indent Department of Mathematics and Statistics, Beijing Institute of Technology, Beijing, China
\newline \indent Key Laboratory of Algebraic Lie Theory and Analysis of Ministry of Education, Beijing, China}
\email{zzh@bit.edu.cn}

\title[NLS with combined powers on $\R^d\times\T$]{Solitons, scattering and blow-up for the nonlinear Schr\"odinger equation with combined
power-type nonlinearities on $\R^d\times\T$}
\author{Luigi Forcella, Yongming Luo, and Zehua Zhao}

\begin{abstract}
We investigate the long time dynamics of the nonlinear Schr\"odinger equation (NLS) with combined powers on the waveguide manifold $\R^d\times\T$. Different from the previously studied NLS-models with single power on the waveguide manifolds, where the non-scale-invariance is mainly due to the mixed nature of the underlying domain, the non-scale-invariance of the present model is both geometrical and structural. By considering different combinations of the nonlinearities, we establish both qualitative and quantitative properties of the soliton, scattering and blow-up solutions. As one of the main novelties of the paper compared to the previous results for the NLS with single power, we particularly construct two different rescaled families of variational problems, which leads to an NLS with single power in different limiting profiles respectively, to establish the periodic dependence results.
\end{abstract}

\keywords{Nonlinear Schr\"odinger equation, waveguide manifolds, scattering, solitons, blow-up}
\subjclass[2020]{35Q55, 35R01, 37K06, 37L50}

\maketitle

\setcounter{tocdepth}{1}
\tableofcontents

\section{Introduction}
\subsection{Background and motivation}
In this paper, we consider the nonlinear Schr\"odinger equation (NLS)
\begin{align}\label{nls}
(i\pt_t+\Delta_{x,y})u=\mu |u|^p u- |u|^q u
\end{align}
and its corresponding stationary equation
\begin{align}\label{nls2}
-\Delta_{x,y}u+\omega u=-\mu|u|^p u+ |u|^q u
\end{align}
on the waveguide manifold $\R_x^d\times\T_y$ with $\mu\in\{-1,1\}$, $p,q\in(\frac{4}{d},\frac{4}{d-1})$ and $p<q$, where $\T=\T_y$ is the $2\pi$-periodic torus. The NLS arises as a fundamental model in numerous physical scenarios such as the Bose-Einstein condensates and nonlinear optics. For detailed physical background on \eqref{nls}, see e.g. \cite{waveguide_ref_1,waveguide_ref_2,waveguide_ref_3} and the references therein.

Because of its close connection to many mathematical areas, the NLS has also attracted much attention from the mathematical community in recent years. Different from the classical references, where people mainly focused on the NLS-models posed either on an infinite domain (such as the Euclidean space $\R^d$) or on a bounded domain with suitable boundary conditions (e.g. the periodic or Dirichlet boundary conditions), we study in this paper the NLS occupying the mixed semi-periodic domain $\R^d\times\T$, the so-called waveguide manifold. The main interest in studying such models on domains of mixed type is twofold: On the one hand, these models are of physical importance since they arise naturally in actual physical applications, where one or more confinements are applied in order to guide the Schr\"odinger wave to propagate in a designed way\footnote{This also explains where name ``waveguide'' comes from.}. On the other hand, the domain $\R^d\times\T$, or more generally $\R^d\times\T^m$, can be seen as an interpolation between the infinite Euclidean space and a compact torus, thus it is an interesting question whether properties of the Schr\"odinger wave on the sole domains could also be inherited to the waveguide manifolds. 

Recently, there has been a number of papers devoting to the study for NLS on waveguide manifolds, see e.g. \cite{TNCommPDE,TzvetkovVisciglia2016,TTVproduct2014,Ionescu2,HaniPausader,CubicR2T1Scattering,R1T1Scattering,Cheng_JMAA,ZhaoZheng2021,RmT1,YuYueZhao2021,Luo_inter,Luo_energy_crit,Luo_MathAnn_2024,Luo_SIMA_2024,Luo_JGA_2024,Luo_JFA_2024}. It is worth pointing out that only models with single nonlinearity on waveguide were studied in the previously mentioned papers. In many actual physical experiments, however, the NLS-models with combined powers were indeed used which serve as a correction for the theoretically preset models. As one of the most famous examples, the cubic-quintic NLS-model, which corrects the focusing cubic NLS where the predicted collapse phenomenon does not happen in actual experiments, plays a fundamental role in the study of nonlinear optics. Generalizing the cubic and quintic nonlinearities to nonlinearities of arbitrary order leads to the study of \eqref{nls}. For the readers' interest, we refer e.g. to \cite{TaoVisanZhang,MiaoDoubleCrit,killip_visan_soliton,Cheng2020,Carles_Sparber_2021,Murphy2021CPDE,
killip2020cubicquintic,carles2020soliton,Luo_JFA_2022,Luo_DoubleCritical,InteractionCombined} for some recent studies on the NLS with combined powers.

Back to our study, we restrict ourselves in this paper to the so-called \textsl{intercritical} (in other word, strictly larger than mass-critical and smaller than energy-critical) regime $p,q\in(\frac{4}{d},\frac{4}{d-1})$, where the nonlinear effects between the particles could either become weaker and weaker as time goes by and scattering of a solution (i.e. a solution becomes asymptotically linear in the sense of \eqref{def scattering}) might take place, or on the contrary become stronger and stronger which dominate the linear dispersive effects and consequently lead to a possible collapse of the particles. We consider the case where the nonlinearity of higher order $q$ is focusing, and the lower power of order $p$ can be tuned  to be either focusing or defocusing. For simplicity, we consider the case where the prefactor of the $q$-power is $-1$ and the one of the $p$-power is chosen from the set $\{\pm 1\}$.

\subsection{Main results}
We now state the main results of this paper. As our first concern, we shall investigate the existence of the soliton solutions which solve the stationary NLS \eqref{nls2}, since they might be the only observable quantities in actual physical experiments. From a mathematical point of view, the solitons can also be seen as a balance point between the linear and nonlinear effects and thus can be used to characterize a sharp threshold bifurcating the scattering and blow-up solutions, see Theorem \ref{thm-blowup} and Theorem \ref{thm-scattering} below for corresponding rigourous statements.

Mathematically, it is nowadays a standard routine to solve the stationary NLS \eqref{nls2} by appealing to suitable variational methods, namely, one formulates suitable variational problems and looks for optimizers of the corresponding energy functionals on either a Nehari mainifold or in a Sobolev space with given normalized mass. Using the Lagrange multiplier theorem one then immediately infers that those minimizers will also be solving \eqref{nls2}. In the context of the waveguide setting, however, the standard methods can not be directly applied in a straightforward way due to the non-scale-invariance of the torus. For NLS-models with single focusing powers, this issue was solved by the second author in a series of papers \cite{Luo_inter,Luo_energy_crit,Luo_MathAnn_2024,Luo_JGA_2024} by introducing the so-called {\it semivirial-vanishing geometry}. Following the same lines in the previous papers, we adapt the framework of the semivirial-vanishing geometry to the present setting, in order to derive the existence and periodic dependence results of the solitons solutions of the NLS \eqref{nls2} with combined powers.

Before introducing the main results concerning the soliton solutions, several variational quantities are defined in order. Firstly, the mass and energy are defined in the standard way as follows:
\begin{align*}
M(u)=\|u\|^2_{L^2(\R^d\times\T)},\quad
E(u)=\frac12\|\nabla_{x,y}u\|_{L^2(\R^d\times\T)}^2+\frac{\mu}{p+2}\|u\|_{L^{p+2}(\R^d\times\T)}^{p+2}-\frac{1}{q+2}\|u\|_{L^{q+2}(\R^d\times\T)}^{q+2}.
\end{align*}
For $\omega>0$ define the action energy function $S_\omega(u)$ by
\[S_\omega(u)=E(u)+\frac{\omega}{2}M(u).\]
We will also make use of the so-called \textit{semivirial} functional
\[Q(u)=\|\nabla_x u\|_{L^2(\R^d\times\T)}^2
+\frac{\mu p d}{2(p+2)}\|u\|_{L^{p+2}(\R^d\times\T)}^{p+2}
-\frac{q d}{2(q+2)}\|u\|_{L^{q+2}(\R^d\times\T)}^{q+2}.\]
Due to their different geometric nature, we shall define two different variational problems for $\mu=\mp 1$ respectively. More precisely, we define the variational problems $m_c$ and $\gamma_\omega$ by
\begin{align*}
m_c&:=\inf\{E(u):M(u)=c,Q(u)=0\},\\
\gamma_\omega&:=\inf\{S_\omega(u):Q(u)=0\}.
\end{align*}

Our first main result is concerned with the existence of ground states for \eqref{nls2} and reads as follows:
\begin{theorem}[Existence of ground states]\label{thm existence of ground state}
Let $d\geq 1$ and $\frac{4}{d}<p<q<\frac{4}{d-1}$. Then the following statements hold true:
\begin{itemize}
\item[(i)]Let $\mu=-1$. Then for any $c\in(0,\infty)$ the variational problem $m_c$ possesses an optimizer $u_c$ which also solves
    \eqref{nls2} with some $\omega=\omega_c>0$.
\item[(ii)]Let $\mu=1$. Then for any $\omega\in(0,\infty)$ the variational problem $\gamma_\omega$ possesses an optimizer $u_\omega$ which
    also solves \eqref{nls2} with the given $\omega$.
\end{itemize}
\end{theorem}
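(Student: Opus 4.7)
The strategy is to adapt the \emph{semivirial-vanishing geometry} of \cite{Luo_inter,Luo_energy_crit,Luo_JGA_2024} to the combined-power setting. The cornerstone is the $\mu$-independent pointwise identity
\begin{align*}
E(u) - \tfrac{2}{pd}\,Q(u) = \tfrac12\|\nabla_y u\|_{L^2}^2 + \tfrac{pd-4}{2pd}\|\nabla_x u\|_{L^2}^2 + \tfrac{q-p}{p(q+2)}\|u\|_{L^{q+2}}^{q+2},
\end{align*}
valid for every $u\in H^1(\R^d\times\T)$. Since $p,q>\tfrac{4}{d}$ and $p<q$, each coefficient on the right is strictly positive, so $E$ (and hence $S_\omega$, after incorporating $\tfrac{\omega}{2}M$) is $H^1$-coercive on $\{Q=0\}$ and $m_c,\gamma_\omega\in[0,\infty)$; a Gagliardo--Nirenberg inequality combined with $Q(u)=0$ upgrades this to $m_c,\gamma_\omega>0$.

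I next construct a fibering. For $u\neq 0$, the anisotropic dilation $u_\lambda(x,y):=\lambda^{d/2}u(\lambda x,y)$ preserves the mass and satisfies $\tfrac{d}{d\lambda}E(u_\lambda) = \tfrac{1}{\lambda}Q(u_\lambda)$. Since $dp/2,dq/2>2$ and the $q$-term dominates at large $\lambda$, the map $\lambda\mapsto Q(u_\lambda)$ is positive near $0$ and negative at $\infty$ and vanishes at a unique $\lambda_*(u)>0$, at which $E(u_\lambda)$ attains its strict maximum. This gives a canonical projection $u\mapsto u_{\lambda_*(u)}$ onto $\{Q=0\}$ and the minimax representations
\begin{align*}
m_c = \inf_{M(u)=c}\max_{\lambda>0}E(u_\lambda), \qquad \gamma_\omega = \inf_{u\neq 0}\max_{\lambda>0}S_\omega(u_\lambda).
\end{align*}

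A minimising sequence $(u_n)$ is then uniformly $H^1$-bounded. I apply a linear profile decomposition on $\R^d\times\T$ tailored to the intercritical regime, where only translations in the Euclidean variable produce losses of compactness (cf.\ \cite{TzvetkovVisciglia2016,HaniPausader,TTVproduct2014}):
\begin{align*}
u_n(x,y) = \sum_{j=1}^{J}\phi_j(x - x_n^j,y) + r_n^J, \qquad |x_n^j-x_n^k|\to\infty\ \text{for}\ j\ne k,
\end{align*}
with $\limsup_{n\to\infty}\|r_n^J\|_{L^{p+2}\cap L^{q+2}}\to 0$ as $J\to\infty$; Brezis--Lieb/almost-orthogonality propagates to $M$, $E$ and $Q$. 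Combining the fibering projection with a strict subadditivity statement for $c\mapsto m_c$ (respectively a Nehari-type comparison for $\gamma_\omega$), I rule out splitting: were two profiles $\phi_{j_1},\phi_{j_2}$ non-trivial, projecting each onto $\{Q=0\}$ and bounding below by the corresponding $m$-value would give $m_{M(\phi_{j_1})}+m_{M(\phi_{j_2})}\leq m_c + o(1)$, contradicting strict subadditivity. Hence a single profile $\phi_{j_0}$ survives and the remainder vanishes in $L^{p+2}\cap L^{q+2}$; coercivity upgrades this to strong $H^1$-convergence after the $x$-translation $x_n^{j_0}$, and $\phi_{j_0}$ is the sought optimizer. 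A standard Lagrange-multiplier argument --- in which the multiplier associated with the $Q$-constraint is eliminated by differentiating along the fibering at $\lambda=1$ and using $Q(\phi_{j_0})=0$ --- yields \eqref{nls2} with $\omega=\omega_c>0$ in case (i) (positivity follows from testing the equation against $\phi_{j_0}$ and invoking the coercivity identity) and with the prescribed $\omega$ in case (ii).

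The hard part is the strict subadditivity used to preclude splitting. On $\R^d$ alone this typically follows from isotropic dilations, but here the torus direction is rigid and the two nonlinearities scale with different exponents $dp/2\neq dq/2$, so the inequality must be established by an \emph{anisotropic} variational comparison along the fibering $\lambda\mapsto u_\lambda$ that simultaneously balances the $p$- and $q$-contributions against the fixed torus scale. A secondary difficulty is handling those profiles with $Q(\phi_j)>0$ in the decomposition, which requires carefully exploiting the monotonicity of $\lambda\mapsto u_\lambda$ and the positivity of the coercive functional on the right-hand side of the initial identity in order to retain the profile-wise inequalities inside the Brezis--Lieb splitting.
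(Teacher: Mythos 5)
Your proposal identifies the right variational framework (semivirial-vanishing geometry, the fibering $\lambda\mapsto u_\lambda$ and its minimax characterization, the coercive identity for $E-\tfrac{2}{pd}Q$) — these all match the paper. However, there are three genuine gaps where your outline diverges from what is actually needed, and in each case you either invoke an unproved (and arguably unavailable) tool or label a difficult step ``standard''.

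First, the splitting analysis. You propose a full linear profile decomposition and then rule out dichotomy via \emph{strict} subadditivity of $c\mapsto m_c$, which you yourself flag as ``the hard part''. The paper does not establish strict subadditivity; it only proves (Lemma \ref{monotone lemma}) that $c\mapsto m_c$ is non-increasing and lower semicontinuous, and combines this with the simpler concentration compactness statement of Lemma \ref{lemma non vanishing limit} (one nontrivial weak limit after $x$-translation, no multi-profile structure) and a Brezis--Lieb argument to show directly that the weak limit $u$ satisfies $\mK(u)=0$ and is a minimizer for $m_{c_1}$ with $c_1\leq c$. The equality $c_1=c$ is then recovered at the very end, \emph{after} positivity of $\omega$ has been established, by a contradiction that uses $\omega>0$. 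Your route requires a strict comparison that the anisotropic combined-power structure resists (the $p$ and $q$ terms scale with different exponents against the rigid torus direction), and the paper was evidently designed to avoid it.

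Second, you call the derivation of the Euler--Lagrange equation ``a standard Lagrange-multiplier argument''. This is precisely the step the paper singles out as nonstandard in the waveguide setting: one cannot a priori apply the Pohozaev identity because it is unclear whether the Lagrange multiplier attached to the constraint $Q(u)=0$ gives an elliptic operator when the partial Laplacians $-\Delta_x$ and $-\partial_y^2$ may enter with different signs. The paper resolves this through a mountain-pass characterization of $m_c$ (Definition \ref{definiton of mp geometry}, Lemma \ref{lemma existence mountain pass}) and a Ghoussoub-type deformation lemma (Lemma \ref{minimizer is solution}) showing that an optimizer of $m_c$ is a genuine critical point of $E|_{S(c)}$. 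Your ``differentiate along the fibering at $\lambda=1$'' shortcut does not substitute for this: it only yields one scalar relation, not the vanishing of the full constrained gradient.

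Third, you claim positivity of $\omega$ from ``testing the equation and invoking the coercivity identity''. Testing and coercivity yield $\omega\geq 0$ only. Excluding $\omega=0$ is a separate, substantive step: for $d\geq 2$ the paper bootstraps elliptic regularity to $C^2$ and invokes the Liouville-type theorem of \cite{Liouville2} (which, unlike the single-power version used in \cite{Luo_inter}, handles the sum $u^{p+1}+u^{q+1}$); for $d=1$ a different cutoff/integration argument is used. Without this, your proof of (i) is incomplete (and your (ii) would inherit the same gap at the ellipticity step, though there the frequency $\omega$ is prescribed rather than a multiplier, which is why the paper sets up the problem on the Nehari manifold rather than on $S(c)$).

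In short: the skeleton is right, but the three places you gloss over — ruling out dichotomy without strict subadditivity, the Pohozaev/ellipticity obstruction, and excluding $\omega=0$ — are exactly where the paper's argument does its real work, and as written your proposal does not close them.
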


We shall follow the same lines in \cite{Luo_inter} to prove Theorem \ref{thm existence of ground state} (i), where we replace the Liouville's theorem applied in \cite{Luo_inter} by the one deduced in \cite{Liouville2} that also works in the setting of combined powers. For Theorem \ref{thm existence of ground state} (ii), we point out that solving a variational problem involving the action energy functional is in general much easier in comparison to looking for normalized ground states (which is the case in Theorem \ref{thm existence of ground state} (i)) due to the presence of the frequency parameter $\omega$. However, in the setting of waveguide manifold, it is \textit{a priori} unclear whether the Lagrange multiplier equation is elliptic since the partial Laplacians $-\Delta_x $ and $-\pt_y^2$ might have different signs, and we note that the ellipticity of the total Laplacian is necessary for the proof of the Pohozaev's identity since certain elliptic regularity will be invoked in the proof. We then appeal to the mountain pass geometry on a suitable Nehari manifold in order to solve this problem. For details, we refer to Subsection \ref{sub nehari} below.

Another interesting problem is the periodic dependence of the soliton solutions, which is formulated as follows: notice that by the boundedness of a torus we may assume that \eqref{nls} and \eqref{nls2} are constant along the periodic direction. In this case the solitons, for example, will automatically reduce to the ones on $\R^d$. As a natural question, we may ask whether those solitons deduced in Theorem \ref{thm existence of ground state} coincide with the ones on $\R^d$ or not. We refer this kind of questions to as the {\it periodic dependence problems} of the solitons. Such problems were firstly studied by Terracini, Tzvetkov and Visciglia \cite{TTVproduct2014} where the NLS on a generalized product space with a single mass-subcritical\footnote{We note that  in \cite{TTVproduct2014} the exponent of the nonlinearity is mass-subcritical w.r.t. the whole space dimension, 
while in our paper the mass-criticality condition is defined only in term of the Euclidean dimension, therefore the mass-critical threshold exponent in our paper is strictly larger than the one in \cite{TTVproduct2014}.} nonlinearity was studied. By combining the rescaled variational problems introduced in \cite{TTVproduct2014} and the framework of the semivirial-vanishing geometry, similar periodic dependence results for the focusing NLS with at least mass-critical nonlinearity have been recently established by the second author, see \cite{Luo_inter,Luo_energy_crit,Luo_JGA_2024}.

We now state the periodic dependence of the ground state solutions deduced in Theorem \ref{thm existence of ground state}. To formulate the main result we shall still introduce some necessary notations. Let $\wmM(u),\wmH(u),\wmK(u),\wmS_\omega(u)$ be the quantities defined by \eqref{1.18}, \eqref{1.19} and \eqref{1.24}. Define also the variational problems
\begin{align*}
\wm_c&:=\inf\{\wmH(u):u\in H^1(\R^d\times\T),\wmM(u)=c,\wmK(u)=0\},\\
\widehat{\gamma}_\omega&:=\inf\{\wmS_\omega(u):u\in H^1(\R^d\times\T),\wmK(u)=0\}.
\end{align*}
The second main result about the periodic dependence of the ground state solutions given in Theorem \ref{thm existence of ground state} is stated as follows:

\begin{theorem}[Periodic dependence of the ground states]\label{thm threshold mass}
Let $d\geq 1$ and $\frac{4}{d}<p<q<\frac{4}{d-1}$.
\begin{itemize}
\item[(i)] Let additionally $\mu=-1$. Then there exist $0<c_*\leq c^*<\infty$ such that
\begin{itemize}
\item For all $c \in (0,c_*)$ we have $m_{c}<2\pi \wm_{(2\pi)^{-1}c}$ and any minimizer $u_c$ of $m_{c}$ satisfies $\pt_y u_c\neq 0$.

\item For all $c\in(c^*,\infty)$ we have $m_{c}=2\pi \wm_{(2\pi)^{-1}c}$ and any minimizer $u_c$ of $m_{c}$ satisfies $\pt_y u_c=0$.
\end{itemize}

\item[(ii)] Let additionally $\mu=1$. Then there exist $0<\omega_*\leq \omega^*<\infty$ such that
\begin{itemize}
\item For all $\omega \in (0,\omega_*)$ we have $\gamma_{\omega}=2\pi \widehat{\gamma}_{\omega}$ and any minimizer $u_\omega$ of $\gamma_{\omega}$ satisfies $\pt_y u_\omega= 0$.

\item For all $\omega\in(\omega^*,\infty)$ we have $\gamma_{\omega}<2\pi \widehat{\gamma}_{\omega}$ and any minimizer $u^\omega$ of $\gamma_{\omega}$ satisfies $\pt_y u^\omega\neq0$.
\end{itemize}
\end{itemize}
\end{theorem}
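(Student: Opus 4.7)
The plan is to compare the waveguide variational problems $m_c$ and $\gamma_\omega$ with their $\R^d$-counterparts $\widehat m_c$ and $\widehat\gamma_\omega$, and to locate each threshold by means of two distinct rescaled families of variational problems, one adapted to the $p$-power and the other to the $q$-power. First I would observe that lifting any $\R^d$-optimizer of $\widehat m_c$ (resp.\ $\widehat\gamma_\omega$) trivially to a $y$-independent function on $\R^d\times\T$ produces an admissible competitor, yielding the unconditional inequalities $m_C\le 2\pi\widehat m_{C/(2\pi)}$ and $\gamma_\omega\le 2\pi\widehat\gamma_\omega$. The whole task therefore reduces to identifying the parameter ranges in which these inequalities are strict (forcing nontrivial $y$-dependence of any minimizer) versus those in which they hold with equality (forcing $y$-independence).

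The central novelty, as announced in the abstract, is the construction of two families of rescalings $\{u^{(p)}_\lambda\}_\lambda$ and $\{u^{(q)}_\lambda\}_\lambda$ chosen so that sending $\lambda$ to the appropriate limit drives either the $p$- or the $q$-nonlinear term to dominate, reducing the variational problem in that limit to a \emph{single-power} analogue on $\R^d\times\T$, for which the periodic dependence theory developed in \cite{Luo_inter,Luo_energy_crit,Luo_JGA_2024} is directly available. Each rescaling must be tuned so that the defining constraint ($M(u)=c$ together with $Q(u)=0$ for Part (i), or the Nehari-type condition used to define $\gamma_\omega$ for Part (ii)) is preserved, typically by combining a spatial dilation in the $x$-variable with a suitable amplitude rescaling and a Lagrange-multiplier-type adjustment.

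For Part (i) ($\mu=-1$), in the small-mass regime one of the two rescalings produces a trial function based on a waveguide ground state of a single-power intercritical NLS which genuinely depends on $y$; this yields $m_c<2\pi\widehat m_{c/(2\pi)}$ for $c\in(0,c_*)$, and any minimizer $u_c$ must then satisfy $\partial_y u_c\ne 0$, since otherwise it would descend to an admissible $\R^d$-competitor of mass $c/(2\pi)$ and force equality. In the large-mass regime, the complementary rescaling sends the problem to a single-power limit where the corresponding waveguide ground state is known to be $y$-independent; this yields $m_c=2\pi\widehat m_{c/(2\pi)}$ and, together with a $y$-averaging / rigidity step, forces $\partial_y u_c=0$ for every minimizer. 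The thresholds $c_*:=\sup\{c:m_c<2\pi\widehat m_{c/(2\pi)}\}$ and $c^*:=\inf\{c:m_c=2\pi\widehat m_{c/(2\pi)}\}$ are then positive and finite, and $c_*\le c^*$ is trivial. Part (ii) ($\mu=1$) is treated analogously after reformulating $\gamma_\omega$ as a mountain-pass value over the Nehari manifold (cf.\ Subsection \ref{sub nehari}), with the small-$\omega$ and large-$\omega$ regimes playing the roles of the large-mass and small-mass regimes of Part (i) respectively, owing to the fact that small $\omega$ corresponds to spatially spread, low-amplitude ground states while large $\omega$ corresponds to concentrated, high-amplitude ones.

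The hard part will be establishing the matching lower bounds $m_c\ge 2\pi\widehat m_{c/(2\pi)}$ and $\gamma_\omega\ge 2\pi\widehat\gamma_\omega$ in the equality regimes, and simultaneously ruling out $y$-dependent minimizers there. Unlike the single-power setting, the two nonlinear terms can interact non-monotonically along the rescaling families, so neither a direct Gagliardo--Nirenberg comparison nor a naive rearrangement suffices. It is precisely this interaction that prevents us from proving $c_*=c^*$ (respectively $\omega_*=\omega^*$); closing this gap would most likely require a significantly finer quantitative analysis of the minimizers via Pohozaev identities combined with a Liouville-type rigidity argument adapted to the combined nonlinearity.
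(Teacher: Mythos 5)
Your high-level plan does track the paper's strategy --- two families of rescaled variational problems, $p$- and $q$-dominated limits, comparison with the $\R^d$ analogues, and a scaling relation to translate parameter-thresholds back into $c$- and $\omega$-thresholds. But several of the steps you describe are either misidentified or left as silent claims that are in fact the entire point of the argument, so I will flag them.

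\emph{Strict inequality regime.} You say that "one of the two rescalings produces a trial function based on a waveguide ground state of a single-power intercritical NLS which genuinely depends on $y$." That is not the paper's mechanism, and it is not clear it can be made to work directly: the waveguide ground state of a single-power problem does not satisfy the combined-power constraint $Q(u)=0$ and would need to be adjusted before it is even admissible. What the paper actually does, for the auxiliary problem $m_1^\ld$ as $\ld\to0$, is (a) show that $\wm_c^\ld\to\wm_c^0$, where $\wm_c^0$ is a \emph{pure $\R^d$} single-power problem, (b) exploit the strict monotonicity of $c\mapsto\wm_c^0$ to get a quantitative gap $\delta>0$, and (c) feed into this gap an explicitly constructed tensor-product competitor $\psi^\ld(x,y)=\rho(y)P^\ld(x)$, where $P^\ld$ is an $\R^d$ ground state and $\rho$ is a hand-built piecewise-linear profile tuned so that $\|\rho\|_{L_y^2}^2=\|\rho\|_{L_y^{p+2}}^{p+2}<\min\{2\pi,\|\rho\|_{L_y^{q+2}}^{q+2}\}$. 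This normalization of $\rho$ is precisely what makes the $y$-dependent competitor beat the $y$-independent one; the limit problem is on $\R^d$, not on the waveguide. For Part (ii) the analogue is genuinely different because the limit $\widehat\gamma_1^0$ is a defocusing problem with \emph{no} minimizer, and a separate direct estimate on $\psi^\ld$ is needed; your plan does not anticipate this obstruction.

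\emph{Equality regime and rigidity.} You describe the $y$-independence of minimizers at large parameter as following from "a $y$-averaging / rigidity step." This is not what is done, and in fact is the subtle part. The paper first proves that the rescaled ground states $u_\ld$ converge \emph{strongly} in $H^1_{x,y}$ to a $y$-independent $\R^d$ profile as $\ld\to\infty$, and only then invokes the implicit-function/spectral-gap argument of \cite[Lem.~3.5]{TTVproduct2014} to upgrade approximate $y$-independence into exact $y$-independence for $\ld$ large. Moreover, you implicitly assert that equality $m_c=2\pi\wm_{c/(2\pi)}$ forces every minimizer to be $y$-independent; this is \emph{not} automatic. The trivial direction is only that strict inequality forces $y$-dependence. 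The paper has to supply an additional monotonicity argument (comparing $\mH_\kappa$ and $\mH_\ld$ for $\ld_*<\kappa<\ld$ and using the characterization $\tilde m_c=m_c$) to rule out $y$-dependent minimizers in the equality range. As written, your proposal treats this as a definitional consequence, which is a real gap.

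\emph{Translating rescaled thresholds to $c$- and $\omega$-thresholds.} The bridge from the auxiliary lemma to the theorem is the identity $m_c=c^{\frac{d-4/q-2}{d-4/q}}m_{1,\kappa_{c,q}^2}=c^{\frac{d-4/p-2}{d-4/p}}m_1^{\kappa_{c,p}^2}$ together with the strict monotonicity of $c\mapsto\kappa_{c,\alpha}$; the analogous relation in $\omega$ uses $\gamma_\omega=\omega^{-\frac{d-4/q-2}{2}}\gamma_{1,\omega^{-1}}=\omega^{-\frac{d-4/p-2}{2}}\gamma_1^{\omega^{-1}}$ and the monotone decrease of $\omega\mapsto\omega^{-1}$. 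You allude to "a suitable amplitude rescaling and a Lagrange-multiplier-type adjustment" but never write any such relation; without it there is no way to read off the existence of $c_*,c^*$ (or $\omega_*,\omega^*$) from a statement about the rescaled problems. Relatedly, your proposed definitions $c_*:=\sup\{c:m_c<2\pi\widehat m_{c/(2\pi)}\}$ and $c^*:=\inf\{c:m_c=2\pi\widehat m_{c/(2\pi)}\}$ do not by themselves guarantee that the strict inequality (resp.\ equality) holds on the whole interval $(0,c_*)$ (resp.\ $(c^*,\infty)$); the paper's definitions are tailored so that this is preserved, which again requires the monotonicity arguments above.
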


In order to prove our results, we follow an approach similar to \cite{TTVproduct2014,Luo_inter,Luo_energy_crit,Luo_JGA_2024}, specifically we shall use suitable rescaled variational problems to prove Theorem \ref{thm threshold mass}. New difficulties however arise due to the presence of the combined powers. More precisely, unlike in \cite{TTVproduct2014} and \cite{Luo_inter,Luo_energy_crit,Luo_JGA_2024}, we are unable to establish the periodic dependence results by using only one single parameterized family of rescaled variational problems, since certain divergence happens when considering the limiting profile of the parameterized problems. We will design well-tailored rescaled variational problems, according to the orders of the nonlinearities, to overcome this problem. 

\begin{remark}
We notice that due to the combined powers we are unable to compare the rescaled variational problem with the unparameterized constant variational problem as in \cite{TTVproduct2014,Luo_inter,Luo_energy_crit,Luo_JGA_2024}, thus it remains an interesting and also challenging open problem whether the thresholds in Theorem \ref{thm threshold mass} (e.g. $c_*$ and $c^*$) will coincide or not.
\end{remark}

 Once the existence of ground states is proved, we have that $u(t)=e^{it}\tilde u$, where $\tilde u$ solves \eqref{nls2}, is a global, non-scattering solution to the time-dependent equation \eqref{nls}. Similar to the single nonlinearity case, it is worth wondering which conditions on the initial datum ensure existence of solution for all times, or conversely lead to formation of singularity in finite time. 

In next theorem, we  establish the existence of  blowing-up solutions for \eqref{nls} and the blow-up rate of such solutions under suitable assumptions. In what follows, and in the rest of the paper, $T_{\max}>0$ and $T_{\min}>0$ are the forward and backward maximal time of existence, respectively, namely the maximal time of existence of the solution $u(t)$ is $I_{\max}=(-T_{\min},T_{\max})$.

\begin{theorem}[Existence of blow-up solutions]\label{thm-blowup}
Let $d\geq 2$, $\frac{4}{d}<p<q<\frac{4}{d-1}$ and $u_0\in H^1(\R^d\times\T)$ satisfy
\[
E(u_0)<m_{M(u_0)}\quad\text{and}\quad Q(u_0)<0
\]
provided $\mu=-1$, or
\[
S_\omega(u_0)<\gamma_\omega\quad\text{and}\quad Q(u_0)<0
\]
provided $\mu=1$ and $\omega>0$. Assume moreover that the initial datum enjoys the following radiality assumption on the non-compact direction: $u_0(x,y)=u_0(|x|,y)$. Then the solution $u$ to \eqref{nls} with $u(0)=u_0$ blows-up in finite time, i.e., $I_{\max}$ is bounded.
\end{theorem}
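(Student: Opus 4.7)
I would adapt the classical Glassey-type localized-virial argument to the waveguide geometry $\R^d\times\T$, using that the cylindrical radiality $u_0(x,y)=u_0(|x|,y)$ is propagated by uniqueness of the flow. For $R\gg 1$ choose a smooth radial weight $\phi_R:\R^d\to\R$ with $\phi_R(x)=|x|^2$ on $|x|\leq R$, $\nabla^2\phi_R(x)\leq 2\,\id$ as quadratic forms globally, and $\|\nabla^k\phi_R\|_{L^\infty}\lesssim R^{2-k}$ for $k=2,3,4$. Setting
\[
V_R(t):=\int_{\R^d\times\T}\phi_R(x)\,|u(t,x,y)|^2\,dx\,dy,
\]
a direct computation from \eqref{nls} produces
\[
\ddot V_R(t)\leq 8\,Q(u(t))+\mathcal{E}_R(t),
\]
where the gradient-quadratic remainder $4\int(\partial_{ij}^2\phi_R-2\delta_{ij})\partial_{x_i}u\,\overline{\partial_{x_j}u}\leq 0$ has been dropped by the good-sign choice of $\phi_R$, and $\mathcal{E}_R(t)$ is supported in $|x|\geq R$ and reduces to a biharmonic zero-order piece of size $\lesssim R^{-2}M(u_0)$ plus nonlinear contributions dominated by $\int_{|x|\geq R}\int_\T (|u|^{p+2}+|u|^{q+2})\,dy\,dx$.

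\textbf{Quantitative coercivity of $Q$.} Conservation of $(M,E)$ (resp.\ $S_\omega$ when $\mu=1$) and the variational characterization of $m_{M(u_0)}$ (resp.\ $\gamma_\omega$) from Theorem \ref{thm existence of ground state} first propagate the sign $Q(u(t))<0$ on the whole existence interval $I_{\max}$: any crossing $Q(u(t_0))=0$ would render $u(t_0)$ admissible for the corresponding variational problem and force $E(u_0)=E(u(t_0))\geq m_{M(u_0)}$, contradicting the hypothesis. For the quantitative upgrade $Q(u(t))\leq -2\delta$ with $\delta=\delta(u_0)>0$, I would use the mass-preserving dilation $u_\lambda(x,y):=\lambda^{d/2}u(\lambda x,y)$, for which $\tfrac{d}{d\lambda}E(u_\lambda)=\lambda^{-1}Q(u_\lambda)$. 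The mass-supercriticality $p,q>4/d$ implies that $\lambda\mapsto E(u_\lambda)$ admits a unique maximum precisely at the zero $\lambda^*(u)$ of $Q(u_\lambda)$, and comparing $E(u(t))$ with $E(u_{\lambda^*(u(t))})\geq m_{M(u_0)}$ converts the strict energy gap $m_{M(u_0)}-E(u_0)>0$ into a uniform negative gap for $Q$. The same dilation handles the $\mu=1$ case with $S_\omega$ in place of $E$, since the action differs from $E$ only by the conserved mass term.

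\textbf{Error control and conclusion.} The radiality of $u(t)$ in $x$, the hypothesis $d\geq 2$, and the one-dimensional radial Strauss lemma yield the pointwise bound $|u(t,x,y)|\lesssim|x|^{-(d-1)/2}\|u(t,\cdot,y)\|_{H^1_x}$ for a.e.\ $y\in\T$. Integrating in $y\in\T$, interpolating with conservation of $M$, and invoking the energy-subcriticality $p,q<4/(d-1)$, one deduces
\[
\int_{|x|\geq R}\int_\T(|u|^{p+2}+|u|^{q+2})\,dy\,dx\leq C\,R^{-\alpha}\bigl(\|\nabla u(t)\|_{L^2}^{\beta_1}+\|\nabla u(t)\|_{L^2}^{\beta_2}\bigr)
\]
for some $\alpha>0$ and some exponents $\beta_1,\beta_2<2$. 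Young's inequality splits these as $\epsilon\,\|\nabla u(t)\|_{L^2}^{2}+C(\epsilon)R^{-\alpha'}$, and the identity $Q(u)=\|\nabla_x u\|^2+\ldots$ together with conservation of $E$ expresses $\|\nabla u(t)\|^2_{L^2}$ in terms of $Q(u(t))$ and the nonlinear $L^{p+2},L^{q+2}$ norms, enabling the bad $\|\nabla u\|^2$-term to be absorbed against the coercivity established above. This yields $\ddot V_R(t)\leq -8\delta$ on $I_{\max}$ for $R$ sufficiently large. Since $V_R\geq 0$, integrating twice forces $V_R(t)$ to become negative in finite time, hence $I_{\max}$ must be bounded.

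\textbf{Main obstacle.} The genuinely delicate step is the absorption in the error control: two nonlinearities at distinct intercritical powers must be simultaneously controlled in the exterior $|x|\geq R$, without the luxury of an a priori uniform $H^1$-bound, since we are in the blow-up regime. Both endpoints of the intercritical window enter essentially—mass-supercriticality $p,q>4/d$ guarantees the concavity/coercivity in the second step, while energy-subcriticality $p,q<4/(d-1)$ delivers the strict sub-quadratic dependence on $\|\nabla u\|$ in the Gagliardo–Nirenberg/Strauss bound of the third step—and the combined-powers structure forces a coercivity-absorption bootstrap strictly more involved than in the single-power setting of \cite{Luo_inter}.
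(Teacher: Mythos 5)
Your outline of the strategy, the localized virial setup, and the coercivity step (propagating $Q(u(t))<0$ by the conservation laws and upgrading to a gradient-quantitative bound via the mass-preserving dilation $u^\lambda$) all match the paper's approach, which proves $Q(u(t))\leq-\delta'\|\nabla_{x,y}u(t)\|_2^2$ in Lemma \ref{lem:Q-control} using the algebraic identities \eqref{Q-df}--\eqref{Q-ff}. The gap is in the error-control step, where you assert exterior nonlinear bounds with gradient exponents $\beta_1,\beta_2<2$ but give no mechanism that actually produces them, and the one you do invoke fails for $d=2$. A fiber-wise radial Strauss bound $|u(x,y)|\lesssim|x|^{-(d-1)/2}\|u(\cdot,y)\|_{L^2_x}^{1/2}\|\nabla_x u(\cdot,y)\|_{L^2_x}^{1/2}$ forces you, after integrating in $y$, to pay an $L^\infty_y L^2_x$ factor (by Agmon in the compact direction), and the net gradient power you obtain is $\|\nabla_{x,y}u\|_2^{q}$, not $\|\nabla_{x,y}u\|_2^{q/2}$. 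Since $q\in(2,4)$ when $d=2$, this is quadratic or worse and cannot be absorbed by $-\delta'\|\nabla_{x,y}u\|_2^2$ in the blow-up regime; taking $R$ large does not help because the error term then dominates precisely when $\|\nabla_{x,y}u\|_2$ is large. Energy-subcriticality $q<4/(d-1)$ gives $q<4$, not $q<2$, so the ``sub-quadratic'' claim is unjustified.

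The paper closes exactly this hole with the mean-plus-oscillation decomposition $u=\mathfrak{m}(u)+(u-\mathfrak{m}(u))$ in the $\T$-direction. The $y$-independent part $\mathfrak{m}(u)$ is handled by Jensen plus Strauss on $\R^d$ directly, yielding power $q/2$ on $\|\nabla_{x,y}u\|_2$ with only conserved mass factors left over. For the oscillating part, one uses the Sobolev embedding $\|u-\mathfrak{m}(u)\|_{L_y^{q+2}}\lesssim\|u\|_{\dot H_y^{q/(2(q+2))}}$, expands in Fourier modes in $y$, applies Minkowski to swap $\ell^2_j$ and $L^{q+2}_x$, applies Strauss mode-by-mode in $x$, and rebalances with H\"older. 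This again delivers exactly $\varrho^{-(d-1)q/2}\|\nabla_{x,y}u\|_2^{q/2}$, which is strictly sub-quadratic for all $d\geq 2$ since $q<4$, so absorption against \eqref{Q-better-control} goes through uniformly. This Fourier-in-$\T$ device is the genuine new ingredient in the waveguide setting and is precisely what is missing from your sketch; without it (or an equivalent mechanism) the argument does not close in dimension $d=2$, and even for $d\geq 3$ one needs a careful accounting to confirm $\beta<2$ rather than simply asserting it.
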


Our strategy classically combines variational estimates and a localization argument in the virial estimates. Nonetheless, due the anisotropy of the underlying domain, the proof is not straightforward as in the classical Euclidean case, and we make use of a Fourier expansion in the compact direction to carefully estimate the contribution of the localized potential energy terms jointly with the decay of radial Sobolev  functions.

\begin{remark} 
To the best of our knowledge, this paper is the first one dealing with formation of singularity in finite time without the assumption of finite variance in the context of waveguide manifolds. Indeed, as in the classical case of a Euclidean domain, for any $d\geq1$, provided  $u_0\in
H^1(\mathbb R^d\times\mathbb T)\cap L^2(\mathbb R^d\times\mathbb T, |x|^2dxdy)$, we have from \eqref{virial-identity2} the usual identity $V_{|x|^2}''(t)=8Q<0,$ so the Glassey's convexity argument gives a finite time blow-up result directly, see \cite{Luo_inter}.
\end{remark}

\begin{remark}
In the one-dimensional Euclidean component case (i.e. $d=1$), we can not hope to remove the assumption of finite variance, as otherwise we can not control the remainder in a localized virial estimate.
\end{remark}
 
In light of the results in Theorem \ref{thm-blowup} and  previous remarks, it is worth mentioning that if we remove both the symmetry assumption and the finiteness of the $L^2(\mathbb R\times\mathbb T, |x|^2dxdy)$-norm of the initial datum,  we can prove the so-called grow-up phenomenon, namely we can prove that if the solution is global forward in time. A rigorous and precise statement is given as follows.

\begin{theorem}[Grow-up solutions]\label{thm-growup} Under the hypothesis of Theorem \ref{thm-blowup}, without any restriction on  the space dimension and without  assuming any symmetry, we have the following dichotomy:
either $u(t)$ blows-up in finite time, or the $T_{\max}=+\infty$ (and similarly for $T_{\min}$) and satisfies
$
\displaystyle\limsup_{t\to+\infty}\|\nabla_{x,y} u\|_{ L^2(\R^d\times\T)}=+\infty.
$
\end{theorem}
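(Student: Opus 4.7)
The plan is to argue by contradiction. Local well-posedness in $H^1(\R^d\times\T)$ provides the standard blow-up alternative: if $T_{\max}<\infty$ then $\|\nabla_{x,y}u(t)\|_{L^2}\to\infty$ as $t\to T_{\max}^-$, so the first branch of the dichotomy already holds. It therefore suffices to rule out the scenario in which $T_{\max}=+\infty$ \emph{and} $\sup_{t\geq 0}\|\nabla_{x,y}u(t)\|_{L^2}<+\infty$. Under this standing assumption, mass conservation yields a uniform $H^1$-bound on the forward flow, and the intercritical Sobolev embedding (valid since $p,q<4/(d-1)$) gives in turn uniform bounds on $\|u(t)\|_{L^{p+2}}$ and $\|u(t)\|_{L^{q+2}}$. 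I would then appeal to exactly the variational trapping established in the proof of Theorem \ref{thm-blowup} — which uses only mass and energy conservation together with the Pohozaev characterization of $m_c$ and $\gamma_\omega$, and therefore does not depend on any symmetry — to conclude the uniform gap
\[Q(u(t))\leq -\delta,\qquad t\geq 0,\]
for some $\delta>0$ depending only on the initial datum.

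The contradiction is to be extracted via a localized virial adapted to the non-compact direction. Let $\phi:[0,\infty)\to\R$ be smooth and convex with $\phi(r)=r^2$ for $r\leq 1$, constant for large $r$, and Hessian $\leq 2I$; set $\phi_R(x):=R^2\phi(|x|/R)$ and
\[V_R(t):=\int_{\R^d\times\T}\phi_R(x)\,|u(t,x,y)|^2\,dx\,dy.\]
Then $0\leq V_R(t)\leq CR^2M(u_0)$, and the uniform $H^1$-bound supplies $|V_R'(t)|\leq CR$. A direct computation on the NLS produces the identity
\[V_R''(t)=8\,Q(u(t))+\mathcal{E}_R(t),\]
where the error $\mathcal{E}_R(t)$ is supported in $\{|x|>R\}$ and is dominated, up to a universal constant, by
\[\|\nabla_x u(t)\|_{L^2(\{|x|>R\}\times\T)}^2+\|u(t)\|_{L^{p+2}(\{|x|>R\}\times\T)}^{p+2}+\|u(t)\|_{L^{q+2}(\{|x|>R\}\times\T)}^{q+2}+R^{-2}M(u_0).\]
Once one shows $|\mathcal{E}_R(t)|\leq 4\delta$ uniformly in $t\geq 0$ for some $R$, the estimate $V_R''(t)\leq -4\delta$ follows, and integrating twice gives $V_R(t)\to -\infty$, which is incompatible with $V_R\geq 0$. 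The analogous statement for $T_{\min}$ follows by time-reversal of the NLS flow.

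The hard part is precisely this last step: securing the uniform-in-time estimate $|\mathcal{E}_R(t)|\leq 4\delta$. Without the Strauss-type radial decay exploited in Theorem \ref{thm-blowup}, the nonlinear tails are a priori only known to be finite, and the orbit $\{u(t):t\geq 0\}$ is not automatically tight under a mere $H^1$-bound. The plan is to import the Fourier expansion in $\T$ used for Theorem \ref{thm-blowup}: writing $u(t,x,y)=\sum_{k\in\Z}u_k(t,x)e^{iky}$ and invoking a modewise Gagliardo--Nirenberg inequality on $\R^d$, each $\|u_k\|_{L^{p+2}(\{|x|>R\}\times\T)}^{p+2}$ is dominated by products of $\|\nabla_x u_k\|_{L^2}$-factors and an $L^2$-tail factor $\|u_k\|_{L^2(|x|>R)}$, and similarly for the $L^{q+2}$ term; summing over $k$ against the control $\sum_k(1+k^2)\|u_k\|_{H^1_x}^2\sim\|u\|_{H^1}^2$ from the standing assumption reduces the problem to bounding the mass tail $\|u(t)\|_{L^2(\{|x|>R\}\times\T)}$ uniformly. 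This last quantity is the real obstacle; I expect it to be absorbed through the positive lower bound $\|u(t)\|_{L^{q+2}}\geq c>0$ forced by $Q(u(t))\leq -\delta$ together with the uniform $H^1$-bound, combined with a dyadic partition in $|x|$ and careful bookkeeping against the conserved mass $M(u_0)$. This bookkeeping is the technical crux; the rest of the argument is then a direct consequence of the virial identity above.
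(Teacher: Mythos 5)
Your reduction is correct up to the contradiction step: the blow-up alternative, the variational trapping $Q(u(t))\leq-\delta$ (which indeed uses only conservation laws and is symmetry-free), and the localized virial identity $V''_{\phi_\varrho}(t)=8Q(u(t))+\mathcal{E}_\varrho(t)$ with $\mathcal{E}_\varrho$ supported in $\{|x|>\varrho\}$ all match the paper. But the strategy you propose for closing the argument --- proving $|\mathcal{E}_\varrho(t)|\leq 4\delta$ \emph{uniformly in $t\geq0$} for a single fixed $\varrho$, so that $V''_{\phi_\varrho}\leq-4\delta$ for all time --- contains a genuine gap, and you correctly sense it yourself. Under only a uniform $H^1$ bound, the orbit is not tight in $L^2$; the mass tail $\|u(t)\|_{L^2(\{|x|>\varrho\}\times\T)}$ can be as large as $M(u_0)^{1/2}$ for any fixed $\varrho$ once $t$ is large (e.g.\ the bulk of the wave translating to infinity at bounded speed is consistent with all your a priori bounds). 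The lower bound $\|u(t)\|_{L^{q+2}}\geq c$ coming from $Q(u(t))\leq-\delta$ gives no localization information whatsoever, and no amount of dyadic bookkeeping against the conserved mass yields a $t$-independent rate of decay of the tail. So the crux you flag as ``the technical crux'' cannot be resolved the way you suggest.

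The paper's resolution (following \cite{DWZ}) is different and is worth absorbing: one does not attempt uniform-in-time smallness, but instead proves an \emph{almost finite speed of propagation} of the $L^2$-mass. Using the simple bound $|V'_{\vartheta_\varrho}(t)|\lesssim \varrho^{-1}$ (with $\vartheta_\varrho$ a bump supported in $\{|x|\geq\varrho\}$) one obtains
\[
\int_{\{|x|\geq\varrho\}\times\T}|u(t)|^2\,dxdy\leq o_\varrho(1)+\tilde\delta\qquad\text{for all }0\leq t\leq \tilde T:=C^{-1}\varrho\,\tilde\delta,
\]
i.e.\ the mass tail is small not for all $t$ but on a time window whose length grows \emph{linearly in} $\varrho$. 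Plugging this into the virial bound gives $V''_{\phi_\varrho}(t)\leq-4\delta$ only for $t\leq\tilde T$, which by itself is not a contradiction. The decisive point is quantitative: integrating twice over $[0,\tilde T]$ with $\tilde T\sim\varrho$ contributes a drop of order $\delta\,\tilde T^2\sim\delta\varrho^2$, while $V_{\phi_\varrho}(0)=o_\varrho(1)\varrho^2$ and $\tilde T\,V'_{\phi_\varrho}(0)=o_\varrho(1)\varrho^2$ (this needs the slightly more careful splitting of the initial virial into $\{|x|\leq\sqrt\varrho\}$ and $\{\sqrt\varrho\leq|x|\leq 2\varrho\}$, giving the $o_\varrho(1)$ gain). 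So the quadratic-in-time gain from the virial exactly matches the quadratic-in-$\varrho$ size of the weight, forcing $V_{\phi_\varrho}(\tilde T)<0$ for $\varrho$ large, a contradiction. This balance --- virial weight $\sim\varrho^2$, propagation time $\sim\varrho$, two time integrations --- is the missing idea in your proposal; without it, the argument as you set it up does not close.
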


In order to show Theorem \ref{thm-growup}, we can invoke the results by \cite{DWZ} on intercritical NLS equations posed on $\mathbb R^d$, which rely on the uniform in time control  of $Q(u(t))$ as in Lemma \ref{lem:Q-control}, virial estimates, and an almost finite  speed of propagation. As the proof is very similar to the purely Euclidean setting, we  sketch the main steps in Appendix \ref{App-grow}.

A natural question arising after the proof of the existence of blowing-up solutions would be whether one could describe the blowing-up solutions in a more quantitative way, or in other words whether the rate of the blow-up could be quantified. The last result concerning solutions with finite time singularity formation is the following theorem about an upper bound rate. The proof relies on the estimates established to prove the blow-up results of Theorem \ref{thm-blowup} and a well-known scheme by Merle, Rapha\"el, and Szeftel \cite{MRS}.

\begin{theorem}[An upper bound of the blow-up rate]\label{thm-blowup-rate}
Let the assumptions of Theorem \ref{thm-blowup} be retained. Then for the blow-up solution $u$ to \eqref{nls} with $u(0)=u_0$ given in
Theorem \ref{thm-blowup}, we have:
\medskip

(i) if $\mu=1$ and $\omega>0$, then
\begin{equation} \label{est-blow1}
\int_{t}^{T_{\max}} (T_{\max}-\tau) \|\nabla_{x,y}
u(\tau)\|^2_{L^2(\R^d\times\T)} d\tau\leq C(T_{\max}-t)^{\frac{2q(d-1)}{(d-2)q+4}} \hbox{\quad 		
for \quad } t\to T_{\max}^-.
\end{equation}
\medskip

(ii)  if $\mu=-1$, then
\begin{equation} \label{est-blow-1}
\int_{t}^{T_{\max}} (T_{\max}-\tau) \|\nabla_{x,y} u(\tau)\|^2_{L^2(\R^d\times\T)} d\tau\leq C(T_{\max}-t)^{\frac{2p(d-1)}{(d-2)p+4}} \hbox{\quad 		
for \quad } t\to T_{\max}^-.
\end{equation}
As a consequence, there exists a time sequence $t_n \to T_{\max}^-$ such that
\begin{equation} \label{blow-rate1}
\|\nabla_{x,y} u(t_n)\|_{L^2(\R^d\times\T)}  \leq C(T_{\max}-t_n)^{-\frac{4-p}{(d-2)q+4}}
\end{equation}
provided $\mu=-1$, and
\begin{equation} \label{blow-rate-1}
\|\nabla_{x,y} u(t_n)\|_{L^2(\R^d\times\T)}  \leq C(T_{\max}-t_n)^{-\frac{4-q}{(d-2)p+4}}
\end{equation}
provided $\mu=1$ and $\omega>0$. A similar result holds for $-T_{\min}$.
\end{theorem}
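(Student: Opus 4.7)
The plan is to adapt the Merle--Rapha\"el--Szeftel (MRS) scheme \cite{MRS} to the waveguide setting, relying on the localized radial virial machinery already developed for the proof of Theorem \ref{thm-blowup}. Let $\psi\in C^{\infty}([0,\infty))$ be a radial cutoff with $\psi(r)=r^2/2$ for $r\le 1$ and $\psi''\le 2$ everywhere, set $\psi_R(x):=R^2\psi(|x|/R)$, and define
\[
V_R(t):=\int_{\R^d\times\T}\psi_R(x)|u(t,x,y)|^2\,dx\,dy.
\]
Since the radial symmetry in $x$ is preserved by the flow, a direct computation yields
\[
V_R''(t)=8Q(u(t))+\mathcal R(u(t),R),
\]
where $\mathcal R$ collects contributions supported on $\{|x|>R\}\times\T$ involving $|\nabla_x u|^2$, $|u|^{p+2}$, $|u|^{q+2}$ and an $R^{-2}|u|^2$ error coming from $\Delta_x^2\psi_R$. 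Combining this with the variational lower bound $Q(u(t))\le -\delta\|\nabla_{x,y}u(t)\|_{L^2}^2$, valid under the hypotheses of Theorem \ref{thm-blowup} (cf.\ Lemma \ref{lem:Q-control}), is the starting point.

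I would then estimate $\mathcal R$ via the Fourier expansion $u(t,x,y)=\sum_{k\in\Z}u_k(t,x)e^{iky}$ in the periodic variable combined with a radial Sobolev/Strauss-type decay estimate on $\R^d$ (which is where the assumption $d\ge 2$ enters). For each focusing exterior power of order $\alpha\in\{p,q\}$ this produces
\[
\int_{\{|x|>R\}\times\T}|u|^{\alpha+2}\,dx\,dy\lesssim R^{-\frac{(d-1)\alpha}{2}}\|\nabla_{x,y}u\|_{L^2}^{\frac{\alpha d}{2}}M(u_0)^{\frac{\alpha+2}{2}-\frac{\alpha d}{4}},
\]
with an analogous control for the $|\nabla_x u|^2$ exterior term. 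In the case $\mu=-1$ both nonlinearities are focusing, both contribute to $\mathcal R$ with unfavourable sign, and since $p<q$ it is the slower-decaying $p$-exterior that is binding; in the case $\mu=1$ the defocusing $p$-term carries the favourable sign in $\mathcal R$ and can be discarded, so only the focusing $q$-exterior survives. Denoting by $\alpha_*$ the effective exponent ($\alpha_*=p$ when $\mu=-1$, $\alpha_*=q$ when $\mu=1$), we arrive at
\[
V_R''(t)\le -\delta\|\nabla_{x,y}u(t)\|_{L^2}^2+C R^{-\frac{(d-1)\alpha_*}{2}}\|\nabla_{x,y}u(t)\|_{L^2}^{\frac{\alpha_* d}{2}}+CR^{-2}.
\]

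At this stage I would apply the MRS time-dependent rescaling $R(t)=\lambda\|\nabla_{x,y}u(t)\|_{L^2}^{\gamma}$, with $\gamma$ tuned so that the remainder is absorbed by half of the main term, leaving $V_{R(t)}''(t)\le -\tfrac{\delta}{2}\|\nabla_{x,y}u(t)\|_{L^2}^2$. Integrating twice from $t$ to $T_{\max}$, applying the a priori bound $0\le V_{R(t)}(t)\le C R(t)^2 M(u_0)$, and running the Cauchy--Schwarz-in-time argument of \cite{MRS} produces the integrated estimates \eqref{est-blow1} and \eqref{est-blow-1} with the claimed exponent $\tfrac{2\alpha_*(d-1)}{(d-2)\alpha_*+4}$. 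The pointwise bounds \eqref{blow-rate1} and \eqref{blow-rate-1} then follow by the standard dyadic mean-value argument applied to the monotone quantity $g(t):=\int_t^{T_{\max}}(T_{\max}-\tau)\|\nabla_{x,y}u(\tau)\|_{L^2}^2\,d\tau$, yielding a sequence $t_n\to T_{\max}^-$ that realizes the advertised pointwise decay.

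The main obstacle is the bookkeeping in the second step. Because of the mixed geometry of $\R^d\times\T$, the radial Strauss decay applies only to the $x$-variable and the $y$-Fourier resummation must be carried out uniformly in the frequency $k$, which forces a careful interpolation between the $L^{p+2}$ and $L^{q+2}$ exterior estimates in which \emph{both} powers enter. In particular, the mismatch between the exponent governing the integrated bound (through $\alpha_*$ alone) and the exponent in the pointwise bounds \eqref{blow-rate1}--\eqref{blow-rate-1} (which mixes $p$ and $q$) reflects the fact that the choice of scaling $\gamma$ and the final dyadic extraction must be calibrated against both powers simultaneously rather than against $\alpha_*$ only; tracking this calibration precisely enough to recover the stated exponents is the delicate point of the proof.
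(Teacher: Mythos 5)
Your high-level plan (localized radial virial identity plus the coercivity $Q(u)\le-\delta\|\nabla_{x,y}u\|_2^2$, then integrate twice and extract a pointwise rate by a mean-value argument) is the correct skeleton and matches the paper in spirit, but two concrete steps in the middle are wrong and would make the argument collapse as written.

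First, the exterior estimate you write,
\[
\int_{\{|x|>R\}\times\T}|u|^{\alpha+2}\,dx\,dy\lesssim R^{-\frac{(d-1)\alpha}{2}}\|\nabla_{x,y}u\|_{L^2}^{\frac{\alpha d}{2}}M(u_0)^{\frac{\alpha+2}{2}-\frac{\alpha d}{4}},
\]
has the wrong power of the gradient. The Strauss/radial-Sobolev decay combined with the Fourier decomposition in $y$ (which is exactly what the paper carries out in the proof of Theorem~\ref{thm-blowup}) produces $\|\nabla_{x,y}u\|_2^{\alpha/2}$, not $\|\nabla_{x,y}u\|_2^{\alpha d/2}$; see \eqref{viri-est-rad-1}. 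This is not a cosmetic difference. The next step is to absorb that remainder into the coercive $\|\nabla_{x,y}u\|_2^2$ term via Young's inequality, which requires the gradient exponent to be strictly below $2$. Since $\alpha>4/d$ in the intercritical regime, $\alpha d/2>2$, so your remainder cannot be absorbed and the scheme fails. The paper's exponent $\alpha/2$ obeys $\alpha/2<2$ (because $\alpha<4/(d-1)\le 4$), which is precisely what makes the Young step work and ultimately gives the $\varrho^{-\frac{2(d-1)\alpha}{4-\alpha}}$ error term after absorption.

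Second, the time-dependent rescaling $R(t)=\lambda\|\nabla_{x,y}u(t)\|_2^{\gamma}$ is not what the paper does, and introducing it here creates a real gap. Differentiating $V_{R(t)}(t)$ in $t$ produces extra terms proportional to $R'(t)$ and $R''(t)$, i.e.\ to $\frac{d}{dt}\|\nabla_{x,y}u(t)\|_2$, for which you have no a priori control (indeed, bounding precisely that quantity is what one is trying to do). The MRS scheme the paper refers to can afford a time-dependent window in its critical setting, but in the present supercritical setting the correct implementation is: fix $\varrho$ independent of $t$, run the localized virial estimate to get $(dq-4)\|\nabla_{x,y}u\|_2^2+V_{\phi_\varrho}''(t)\le C\varrho^{-\frac{2(d-1)q}{4-q}}$ (after using \eqref{Q-df} or \eqref{Q-ff} to replace $Q$), integrate twice in time, and only \emph{then} optimize $\varrho$ in terms of $T_{\max}-t_0$. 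This gives \eqref{estimate-rho-opt}, which is then fed into the ODE inequality for $g(t)=\int_t^{T_{\max}}(T_{\max}-\tau)\|\nabla_{x,y}u(\tau)\|_2^2\,d\tau$.

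Finally, you attempt to explain the apparent mixing of $p$ and $q$ between the integrated bound and the pointwise bounds as a ``calibration against both powers simultaneously.'' That is not what happens. In the paper's proof, each case ($\mu=1$ or $\mu=-1$) uses a single effective exponent ($q$ or $p$, respectively), and the pointwise bound derived in the $\mu=1$ case reads $(T_{\max}-t_n)^{-\frac{4-q}{(d-2)q+4}}$, with only $q$ appearing. The $p$ in the denominator of \eqref{blow-rate-1} (and the $q$ in \eqref{blow-rate1}) is inconsistent with the paper's own derivation and is almost certainly a typo in the statement; there is no mechanism in the proof that would genuinely mix the two powers, and building your scaling parameter $\gamma$ around this supposed mixing would lead you astray.
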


At the end of the introductory section, we conclude the analysis concerning the dynamical properties of solutions to \eqref{nls} by considering global solutions, and in particular scattering solutions. As written above, the standing wave $u(t)=e^{it}\tilde u$ is a global non-scattering solution, in the sense that it is not decaying in time. It is worth wondering under which conditions on the initial data, solutions to \eqref{nls} are global, i.e., they exist for every time $t\in \mathbb R$, and moreover how they behave for large times. Specifically, we will establish conditions leading to global well-posedness of the Cauchy problem associated to \eqref{nls}, and we will also prove that a solution \emph{scatters}, namely they behave as a linear Schr\"odinger wave for large time in the energy topology, see \eqref{def scattering} below. The global well-posedness and scattering for small data is classical and is a consequence of perturbation arguments. The next theorem, by using the ground state solutions deduced in Theorem \ref{thm existence of ground state}, ensures scattering for large data. 

\begin{theorem}[Large data scattering]\label{thm-scattering}
Let $d\geq 1$, $\frac{4}{d}<p<q<\frac{4}{d-1}$ and $u_0\in H^1(\R^d\times\T)$ satisfy
$$E(u_0)<m_{M(u_0)}\quad\text{and}\quad Q(u_0)>0$$
provided $\mu=-1$, or
$$S_\omega(u_0)<\gamma_\omega\quad\text{and}\quad Q(u_0)>0$$
provided $\mu=1$ and $\omega>0$. Then the solution $u$ to \eqref{nls} with $u(0)=u_0$ is global and scattering in the
sense that there exist $\phi^{\pm} \in H^1(\mathbb{R}^d\times \mathbb{T})$ such that
\begin{equation}\label{def scattering}
\lim_{t \rightarrow \pm\infty} \|u(t)-e^{it\Delta_{x,y}}\phi^{\pm}\|_{H^1(\mathbb{R}^d\times \mathbb{T})}=0.
\end{equation}
\end{theorem}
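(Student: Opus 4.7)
The plan is to run the Kenig--Merle concentration-compactness and rigidity scheme on the waveguide $\R^d\times\T$, using as variational input the ground state thresholds from Theorems~\ref{thm existence of ground state} and \ref{thm threshold mass}.

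First I would propagate the variational assumptions along the flow. For $\mu=-1$, conservation of $M$ and $E$ together with $E(u_0)<m_{M(u_0)}$ prevents $Q(u(t))$ from vanishing on $I_{\max}$, since otherwise $u(t)$ would be admissible in the definition of $m_{M(u_0)}$ and the energy inequality would be violated. By continuity of $t\mapsto Q(u(t))$ and the sign assumption $Q(u_0)>0$, the condition $Q(u(t))>0$ is trapped on $I_{\max}$; the case $\mu=1$ is analogous with $S_\omega$ in place of $E$. Expanding $Q$ in terms of $E$ (resp.\ $S_\omega$) and invoking Gagliardo--Nirenberg on $\R^d\times\T$ yields both a uniform $H^1$-bound for $u(t)$ and a quantitative coercivity estimate $Q(u(t))\geq\delta>0$. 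In particular $I_{\max}=\R$.

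Small-data scattering is classical from the waveguide Strichartz estimates \`a la Tzvetkov--Visciglia. Assuming the theorem fails, I would define a critical threshold as the infimum of the admissible functional ($E$ when $\mu=-1$, or $S_\omega$ when $\mu=1$) over non-scattering data satisfying the hypotheses, and apply a linear profile decomposition in $H^1(\R^d\times\T)$ together with the nonlinear profile construction and a stability lemma for \eqref{nls} to extract a minimal non-scattering solution $u_c$ whose orbit is precompact in $H^1(\R^d\times\T)$ modulo translations in the non-compact $x$-direction. The combined-power structure enters exactly where Theorem~\ref{thm threshold mass} does: each profile either lives at unit scale on $\R^d\times\T$, or concentrates at a vanishing scale, in which case the rescaled limit is an NLS on $\R^{d+1}$ with only the higher-order nonlinearity $|u|^qu$ surviving (and still intercritical on $\R^{d+1}$ since $p,q\in(\tfrac4d,\tfrac{4}{d-1})\subset(\tfrac{4}{d+1},\tfrac{4}{d-1})$). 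The concentrating profiles are then ruled out by the known Euclidean intercritical scattering theory below threshold (see e.g.\ \cite{MiaoDoubleCrit,killip_visan_soliton,Luo_JFA_2022,Luo_DoubleCritical}), so that a single waveguide profile remains and must coincide with $u_c$.

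The rigidity step exploits a localized virial identity in the $x$-variable: taking $V_R(t)=\int_{\R^d\times\T}\phi(|x|/R)|x|^2|u(t,x,y)|^2\,dx\,dy$ for a standard cutoff $\phi$, one has $V_R''(t)=8Q(u(t))+o_R(1)$, where the error is uniform in $t$ by the precompactness of the orbit of $u_c$. The coercivity $Q(u_c(t))\geq\delta>0$ then forces $V_R'$ to grow linearly in $t$, contradicting its boundedness on a precompact orbit; hence $u_c\equiv 0$, against nontriviality. I expect the main obstacle to be the construction of the minimal non-scattering solution above, since the profile decomposition must simultaneously book-keep $x$-translations, scales degenerating to zero, and the interplay between $|u|^pu$ and $|u|^qu$ in both the stability theory and the identification of each limiting equation; the two rescaled variational problems engineered for Theorem~\ref{thm threshold mass} are exactly what allow a clean identification of the Euclidean limiting equation and the invocation of the corresponding Euclidean scattering theorem strictly below threshold.
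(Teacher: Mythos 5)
Your scheme is a legitimate Kenig--Merle route for the low-dimensional range, and it matches the concentration-compactness approach that the paper cites from \cite{Luo_inter} for $d\leq 4$. The coercivity/trapping argument you describe is essentially Lemma~\ref{lem coercive}, and the rigidity step via a localized virial is standard once uniform $Q$-coercivity is in hand. However, there are two genuine problems with the proposal as written.

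First, and most importantly, the theorem is claimed for all $d\geq 1$, and your proposal would break down for $d\geq 5$. In that range $q<\tfrac{4}{d-1}\leq 1$, so the derivative of the nonlinearity $z\mapsto |z|^q z$ is only H\"older continuous and not Lipschitz. Consequently the long-time perturbation/stability lemma that underpins the nonlinear profile construction (and therefore the extraction of a minimal non-scattering solution with precompact orbit) does not close with the standard $H^1$-Lipschitz estimates. On $\mathbb{R}^d$ one can salvage this with fractional chain rules \`a la \cite{defocusing5dandhigher}, but, as the paper explicitly notes, such fractional calculus is not presently available on the product space in the form needed here. This is precisely why the paper abandons concentration compactness in high dimensions and instead sketches, in Appendix~\ref{Sec: scattering}, an interaction Morawetz--Dodson--Murphy argument (Lemmas~\ref{indices}--\ref{lem inter morawetz}) that proves scattering directly from a space-time bound without ever invoking a stability lemma at fractional regularity. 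Your write-up does not mention this obstruction nor an alternative for $d\geq 5$, so the proposal is incomplete.

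Second, the role you assign to Theorem~\ref{thm threshold mass} is misplaced. That theorem compares $m_c$ to $2\pi\,\wm_{(2\pi)^{-1}c}$, i.e.\ to the $\mathbb{R}^d$ threshold of $y$-independent competitors; it governs whether the waveguide ground state is genuinely periodic-dependent. In a profile decomposition, a profile concentrating at vanishing scale produces, after rescaling, a solution of the pure $q$-power NLS on $\mathbb{R}^{d+1}$, and to rule it out one must compare the relevant functional of the profile to the $\mathbb{R}^{d+1}$ (pure $q$-power) intercritical ground-state threshold, not to the $\mathbb{R}^d$ threshold of Theorem~\ref{thm threshold mass}. Relatedly, the Euclidean references you cite to kill the concentrating profiles \cite{MiaoDoubleCrit,killip_visan_soliton,Luo_JFA_2022,Luo_DoubleCritical} all concern combined-power NLS, whereas the limiting equation you just derived has a single power; what is actually needed is the classical intercritical pure-power scattering theory below threshold on $\mathbb{R}^{d+1}$. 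The two rescaled families designed for Theorem~\ref{thm threshold mass} serve the periodic-dependence result and are not the mechanism by which the concentrating profile is identified with an $\mathbb{R}^{d+1}$ limit.
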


In the case of the NLS with single power, the large data scattering result was proved in \cite{Luo_inter} and \cite{Luo_MathAnn_2024} in the cases $d<5$ and $d\geq 5$, by using the {\it concentration compactness} and {\it interaction Morawetz-Dodson-Murphy} methods, respectively. The main reason for such a difference is that the nonlinearity becomes less regular, or more precisely its derivative is no longer Lipschitz, in higher dimensional spaces $d\geq 5$. We shall notice that one may still derive large data scattering results on higher dimensional Euclidean spaces $\R^d$ by appealing to suitable fractional calculus, see e.g. \cite{defocusing5dandhigher}, but so far we don't know whether such fractional calculus is also available on product spaces. Alternatively, such difficulties were overcome by the second author \cite{Luo_MathAnn_2024} by appealing to the interaction Morawetz inequality recently developed by Dodson and Murphy \cite{defocusing5dandhigher,DodsonMurphyNonRadial} which avoids the use of any fractional calculus. At this point, we notice that different from the variational analysis, the proof for Theorem \ref{thm-scattering} is essentially the same as in \cite{Luo_inter,Luo_MathAnn_2024}. For this reason, we shall present a sketch of the proof of Theorem \ref{thm-scattering} in the case $d\geq 5$ based on the interaction Morawetz inequality in Appendix \ref{Sec: scattering}.

\begin{remark}
Notice that in the waveguide setting, even by using the interaction Morawetz inequality we are able to avoid the use of the fractional calculus in the Euclidean space, the fractional derivatives in the periodic direction is still needed. New ideas were then introduced in the recent paper \cite{Luo_MathAnn_2024} by the second author to overcome the additional technical difficulties. 
\end{remark}

The rest of the paper is organized as follows: Sections \ref{sec: 3a} and \ref{sec 3b} are devoted to the proofs of Theorem \ref{thm existence of ground state} and Theorem \ref{thm threshold mass} respectively. In Section \ref{sec 4} we give the proofs for the blow-up results Theorem \ref{thm-blowup} and Theorem \ref{thm-blowup-rate}. In Appendix \ref{Sec: scattering} we give a sketch for the proof of the large data scattering result Theorem \ref{thm-scattering} in the case $d\geq 5$ by using the modern method based on the interaction Morawetz inequality. Finally, a proof for the grow-up result Theorem \ref{thm-growup} will be given in Appendix \ref{App-grow}.

\subsubsection*{Acknowledgment}
Y. Luo was supported by the NSF grant of China (No. 12301301) and the NSF grant of Guangdong (No. 2024A1515010497). Z. Zhao was supported by the NSF grant of China (No. 12101046, 12271032) and the Beijing Institute of Technology Research Fund Program for Young Scholars.

\subsection{Notations and preliminaries}
For simplicity, we ignore in most cases the dependence of the function spaces on their underlying domains and hide this dependence in their
indices. For example $L_x^2=L^2(\R^d)$, $H_{x,y}^1= H^1(\R^d\times \T)$
and so on. However, when the space is involved with time, we still display the underlying temporal interval such as $L_t^pL_x^q(I)$,
$L_t^\infty L_{x,y}^2(\R)$ etc. The norm $\|\cdot\|_p$ is defined by $\|\cdot\|_p:=\|\cdot\|_{L_{x,y}^p}$.

Next, we define the variational quantities, such as mass and energy etc. that will be frequently used in the proof of the main results. For $u\in
H_{x,y}^1$, define
\begin{align}
\mH(u)&:=\frac{1}{2}\|\nabla_{x,y} u\|^2_{2}+\frac{\mu}{p+2}\|u\|^{p+2}_{p+2}-\frac{1}{q+2}\|u\|^{q+2}_{q+2},\label{def of mhu}\\
\mM(u)&:=\|u\|^2_{2},\quad\mS_\omega(u):=\frac{\omega}{2}\mM(u)+\mH(u),\\
\mK(u)&:=\|\nabla_{x} u\|^2_{2}+\frac{\mu p d}{2(p+2)}\|u\|^{p+2}_{p+2}-\frac{ q d}{2(q+2)}\|u\|^{q+2}_{q+2},\\
\mI(u)&:=\frac{1}{2}\|\pt_y u\|_{2}^2+\bg(\frac{1}{2}-\frac{2}{pd}\bg)\|\nabla_x
u\|_{2}^2+\frac{1}{q+2}\bg(\frac{q}{p}-1\bg)\|u\|^{q+2}_{q+2}=\mH(u)-\frac{2}{pd}\mK(u).\label{def of mI}
\end{align}
For $\ld\in[0,\infty]$ (as long as the quantities are well-defined), define
\begin{align}\label{def modified energy}
\mH_{\ld}(u)&:=\frac{\ld}{2}\|\pt_{y} u\|^2_{2}+\frac{1}{2}\|\nabla_{x} u\|^2_{2}
+\frac{\mu\ld^{\frac{p}{q}-1}}{p+2}\|u\|^{p+2}_{p+2}-\frac{1}{q+2}\|u\|_{q+2}^{q+2},\\
\mH^{\ld}(u)&:=\frac{\ld}{2}\|\pt_{y} u\|^2_{2}+\frac{1}{2}\|\nabla_{x} u\|^2_{2}
+\frac{\mu}{p+2}\|u\|^{p+2}_{p+2}-\frac{\ld^{\frac{q}{p}-1}}{q+2}\|u\|_{q+2}^{q+2},\\
\mS_{1,\ld}(u)&:=\frac12\mM(u)+\mH_{\ld}(u),\quad \mS_{1}^{\ld}(u):=\frac12\mM(u)+\mH^{\ld}(u),\\
\mK_{\ld}(u)&:=\|\nabla_{x} u\|^2_{2}
+\frac{\mu \ld^{\frac{p}{q}-1}pd}{2(p+2)}\|u\|^{p+2}_{p+2}-\frac{qd}{2(q+2)}\|u\|_{q+2}^{q+2},\label{def of Q ld}\\
\mK^{\ld}(u)&:=\|\nabla_{x} u\|^2_{2}
+\frac{\mu pd}{2(p+2)}\|u\|^{p+2}_{p+2}-\frac{\ld^{\frac{q}{p}-1}qd}{2(q+2)}\|u\|_{q+2}^{q+2},\\
\mI_{\ld}(u)&:=\frac{\ld}{2}\|\pt_y u\|_{2}^2+\bg(\frac{1}{2}-\frac{2}{pd}\bg)\|\nabla_x u\|_{2}^2
+\frac{1}{q+2}\bg(\frac{q}{p}-1\bg)\|u\|_{q+2}^{q+2}.\label{def of I ld}
\end{align}
For $u\in H_x^1$, define
\begin{align}
\wmH(u)&:=\frac{1}{2}\|\nabla_{x} u\|^2_{L_x^2}+\frac{\mu}{p+2}\|u\|^{p+2}_{L_x^{p+2}}-\frac{1}{q+2}\|u\|^{q+2}_{L_x^{q+2}}\label{1.18}\\
\wmM(u)&:=\|u\|^2_{L_x^2},\quad \wmS_\omega(u):=\wmH(u)+\frac{\omega}{2}\wmM(u),\label{1.19}\\
\wmI(u)&:=\bg(\frac{1}{2}-\frac{2}{pd}\bg)\|\nabla_x u\|_{L_x^2}^2+\frac{1}{q+2}\bg(\frac{q}{p}-1\bg)\|u\|_{L_x^{q+2}}^{q+2},\label{def of wmI}\\
\wmH_\ld(u)&:=\frac{1}{2}\|\nabla_{x}
u\|^2_{L_x^2}+\frac{\mu\ld^{\frac{p}{q}-1}}{p+2}\|u\|^{p+2}_{L_x^{p+2}}-\frac{1}{q+2}\|u\|^{q+2}_{L_x^{q+2}},\\
\wmH^{\ld}(u)&:=\frac{1}{2}\|\nabla_{x} u\|^2_{L_x^2}+\frac{\mu}{p+2}\|u\|^{p+2}_{L_x^{p+2}}
-\frac{\ld^{\frac{q}{p}-1}}{q+2}\|u\|^{q+2}_{L_x^{q+2}},\\
\wmS_{1,\ld}(u)&:=\frac12\wmM(u)+\wmH_{\ld}(u),\quad \wmS_{1}^{\ld}(u):=\frac12\wmM(u)+\wmH^{\ld}(u),\label{1.24}\\
\wmK_\ld(u)&:=\|\nabla_{x} u\|^2_{L_x^2}+\frac{\mu \ld^{\frac{p}{q}-1}p
d}{2(p+2)}\|u\|^{p+2}_{L_x^{p+2}}-\frac{qd}{2(q+2)}\|u\|^{q+2}_{L_x^{q+2}},\\
\wmK^{\ld}(u)&:=\|\nabla_{x} u\|^2_{L_x^2}+\frac{\mu pd}{2(p+2)}\|u\|^{p+2}_{L_x^{p+2}}
-\frac{\ld^{\frac{q}{p}-1}qd}{2(q+2)}\|u\|^{q+2}_{L_x^{q+2}},\\
\wmI^\ld(u)&:=\bg(\frac{1}{2}-\frac{2}{pd}\bg)\|\nabla_x u\|_{L_x^2}^2+\frac{\ld^{\frac{q}{p}-1}}{q+2}\bg(\frac{q}{p}-1\bg)\|u\|_{L_x^{q+2}}^{q+2}.
\end{align}
As convention, the number $\infty^{\frac{p}{q}-1}$ is defined as zero. We also define the sets 
\begin{align}
V(c):=\{u\in S(c):\mK(u)=0\},\label{def vc}
\end{align}

\begin{align}
S(c)&:=\{u\in H_{x,y}^1:\mM(u)=c\},\quad\widehat{S}(c):=\{u\in H_x^1:\wmM(u)=c\}\\
V_\ld(c)&:=\{u\in S(c):\mK_\ld(u)=0\},\quad
V^\ld(c):=\{u\in S(c):\mK^\ld(u)=0\},\\
\widehat{V}_\ld(c)&:=\{u\in \widehat{S}(c):\wmK_\ld(u)=0\},\quad\widehat{V}^\ld(c):=\{u\in \widehat{S}(c):\wmK^\ld(u)=0\}.
\end{align}
and the variational problems
\begin{alignat}{2}
&m_{c,\ld}:=\inf\{\mH_\ld(u):u\in V_\ld(c)\},\quad
&&m_{c}^\ld:=\inf\{\mH^\ld(u):u\in V^\ld(c)\},\label{def of auxiliary problem} \\
&\gamma_{1,\ld}:=\inf\{\mS_{1,\ld}(u):u\in H_{x,y}^1,\,\mK_\ld(u)=0\},\quad
&&\gamma_{1}^{\ld}:=\inf\{\mS_{1}^{\ld}(u):u\in H_{x,y}^1,\,\mK^\ld(u)=0\},\label{1.31}\\
&\wm_{c,\ld}:=\inf\{\wmH_\ld(u):u\in \widehat{V}_\ld(c)\},\quad
&&\wm_c^{\ld}:=\inf\{\wmH^\ld(u):u\in \widehat{V}^\ld(c)\},\label{def of auxiliary problem 2}\\
&\widehat{\gamma}_{1,\ld}:=\inf\{\wmS_{1,\ld}(u):u\in H_{x}^1,\,\wmK_\ld(u)=0\},\quad
&&\widehat{\gamma}_{1}^{\ld}:=\inf\{\wmS_{1}^{\ld}(u):u\in H_{x}^1,\,\wmK^\ld(u)=0\}.\label{1.33}
\end{alignat}
Finally, for a function $u\in H_{x,y}^1$, the scaling operator $u\mapsto u^t$ for $t\in(0,\infty)$ is defined by
\begin{align}\label{def of scaling op}
u^t(x,y):=t^{\frac d2}u(tx,y).
\end{align}

The following useful results for the variational problem $\wm_{c,0}$ are stated in order. For a proof, see e.g.
\cite{Cazenave2003,Jeanjean1997,Bellazzini2013,BellazziniJeanjean2016}.

\begin{lemma}\label{lem wmc property}
The following statements hold true:
\begin{itemize}
\item[(i)]For any $c>0$ the variational problem $\wm_{c,\infty}$ has an optimizer $P_c\in \widehat{S}(c)$. Moreover, $P_c$ satisfies the
    standing wave equation
\begin{align}\label{standing wave on rd}
-\Delta_x P_c+\omega_c P_c=|P_c|^q P_c
\end{align}
with some $\omega_c>0$.
\item[(ii)] Any solution $P_c\in H_x^1$ of \eqref{standing wave on rd} with $\omega_c>0$ is of class $W^{3,p}(\R^d)$ for all
    $p\in[2,\infty)$.
\item[(iii)] Any solution $P_c\in H_x^1$ of \eqref{standing wave on rd} with $\omega_c\geq 0$ satisfies $\wmK_{\infty}(P_c)=0$.
\item[(iv)] The mapping $c\mapsto \wm_{c,0}$ is strictly monotone decreasing and continuous on $(0,\infty)$.
\end{itemize}
\end{lemma}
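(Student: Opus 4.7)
At $\ld=\infty$ the convention $\infty^{(p/q)-1}=0$ collapses $\wmH_\infty$ and $\wmK_\infty$ to the classical focusing single-power functionals on $\R^d$, so $\wm_{c,\infty}$ is the standard mass-constrained Pohozaev-manifold minimization for the intercritical NLS with nonlinearity $|u|^qu$. For (i) I would first combine the constraint $\|\nabla u\|_{L^2_x}^2=\tfrac{qd}{2(q+2)}\|u\|_{L^{q+2}_x}^{q+2}$ with the mass-critical Gagliardo--Nirenberg inequality to show that $\wm_{c,\infty}>0$ and that minimizing sequences are bounded and bounded away from the trivial function in $H^1_x$ (the Pohozaev constraint rewrites the energy as the coercive expression $\big(\tfrac12-\tfrac{2}{qd}\big)\|\nabla u\|_{L^2_x}^2$ on $\widehat V_\infty(c)$). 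Next I would pass to radially decreasing rearrangements (readjusted by a dilation in $\tau$ to stay on the Pohozaev manifold, an operation which cannot increase the maximum of $\wmH_\infty$ along the scaling orbit $u_\tau=\tau^{d/2}u(\tau\cdot)$) and extract a weak $H^1_{\mathrm{rad}}$ limit $P_c$; Strauss's compact embedding $H^1_{\mathrm{rad}}\hookrightarrow L^{q+2}_x$ and weak lower semicontinuity of the Dirichlet norm then identify $P_c$ as an optimizer. The Lagrange multiplier theorem produces $\omega_c\in\R$ with $-\Delta P_c+\omega_c P_c=|P_c|^q P_c$, and comparing the energy and Pohozaev identities of (iii) below yields $\omega_c\|P_c\|_{L^2_x}^2=\tfrac{2(q+2)-qd}{2(q+2)}\|P_c\|_{L^{q+2}_x}^{q+2}>0$, positivity following from $q(d-2)<4$, which is automatic throughout the intercritical range.

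For (ii) I would use a standard elliptic bootstrap: Sobolev embedding places $|P_c|^qP_c$ in $L^r_x$ for a range of $r$, and $L^r$ Calder\'on--Zygmund estimates applied to $-\Delta P_c=|P_c|^qP_c-\omega_cP_c$ upgrade $P_c$ to $W^{2,r}_x$; iterating reaches $W^{3,p}_x$ for every $p\in[2,\infty)$. For (iii) I would multiply the standing wave equation by $P_c$ and by $x\cdot\nabla P_c$ (integration by parts justified by (ii)) to get the energy identity $\|\nabla P_c\|_{L^2_x}^2+\omega_c\|P_c\|_{L^2_x}^2=\|P_c\|_{L^{q+2}_x}^{q+2}$ and the Pohozaev identity $(d-2)\|\nabla P_c\|_{L^2_x}^2+\omega_c d\|P_c\|_{L^2_x}^2=\tfrac{2d}{q+2}\|P_c\|_{L^{q+2}_x}^{q+2}$; multiplying the first by $d$ and subtracting the second kills the $\omega_c\|P_c\|_{L^2_x}^2$-term and gives exactly $\wmK_\infty(P_c)=0$. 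Since no sign condition on $\omega_c$ enters, the identity is valid for any $\omega_c\geq 0$.

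For (iv), under the analogous convention $0^{(p/q)-1}=0$ at the other endpoint the functional $\wm_{c,0}$ reduces again to the single-$q$-power problem, so the statement is the classical monotonicity and continuity of the ground-state energy in the mass. Fixing any $u$ the dilation $u_\tau(x):=\tau^{d/2}u(\tau x)$ preserves $\wmM$ while $\wmH_\infty(u_\tau)$ is a strictly concave profile in $\tau^2$ whose unique maximum on $(0,\infty)$ is attained precisely where $\wmK_\infty(u_\tau)=0$. Combining this with the multiplicative scaling $u\mapsto\mu u$ (which sends the mass $c$ to $\mu^2c$) and reprojecting onto the Pohozaev manifold produces, for any $0<c<c'<\infty$, the explicit relation $\wm_{c',0}=(c'/c)^{\beta}\wm_{c,0}$ with $\beta=\tfrac{q(d-2)-4}{qd-4}$, strictly negative since $q(d-2)<4$ and $qd>4$ in the intercritical regime. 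Strict monotonicity and analytic (hence continuous) dependence of $c\mapsto\wm_{c,0}$ on $(0,\infty)$ follow immediately.

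The main obstacle is the existence step in (i): because $\wmH_\infty$ is unbounded below on $\widehat{S}(c)$ in the intercritical regime, a direct concentration--compactness analysis of minimizing sequences cannot be applied naively and one must instead work on the Pohozaev submanifold. The workaround via Schwarz symmetrization and Strauss compactness requires the delicate bookkeeping that the Pohozaev constraint be restored after symmetrization by an auxiliary dilation, and the identification of $\omega_c$ as a \emph{positive} Lagrange multiplier relies on the full intercritical restriction $4/d<q<4/(d-1)$.
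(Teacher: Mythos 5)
The paper does not prove this lemma at all: it presents it as a known fact and cites \cite{Cazenave2003,Jeanjean1997,Bellazzini2013,BellazziniJeanjean2016} for a proof. So there is no in-paper argument to compare against; your proposal is a blind reconstruction of a standard result. With that caveat, your outline is the textbook approach (Pohozaev-manifold minimization, Schwarz symmetrization restored to the constraint by a dilation, Strauss compactness, Lagrange multiplier, Nehari--Pohozaev to sign $\omega_c$, bootstrap for regularity, pure scaling for (iv)), and your computations check out: the multiplier sign $\omega_c\|P_c\|_{L^2_x}^2=\tfrac{4-q(d-2)}{2(q+2)}\|P_c\|_{L^{q+2}_x}^{q+2}>0$ is correct because $q<4/(d-1)<4/(d-2)$, and the scaling exponent $\beta=\tfrac{q(d-2)-4}{qd-4}<0$ is correct.

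Two points deserve flagging. First, the Strauss compact embedding $H^1_{\mathrm{rad}}\hookrightarrow L^{q+2}_x$ requires $d\geq 2$, whereas the paper allows $d\geq 1$; for $d=1$ you would need a substitute (the pointwise decay $|u(x)|\lesssim |x|^{-1/2}\|u\|_{H^1_x}$ for monotone decreasing $L^2$ profiles, or a concentration-compactness argument). As the cited references cover this, it is not a serious gap, but your sketch as written silently excludes $d=1$.

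Second, your reading of part (iv) rests on the ad hoc convention $0^{(p/q)-1}=0$, which is inconsistent: since $p<q$ the exponent $(p/q)-1$ is negative, so $0^{(p/q)-1}$ is $+\infty$, not $0$; the paper's stated convention only assigns $\infty^{(p/q)-1}=0$. What the paper actually invokes in the proof of Lemma 3.1 is the strict monotonicity of $c\mapsto\wm_{c,\infty}$ (single-$q$-power, obtained from $\ld\to\infty$), and elsewhere of $c\mapsto\wm_c^{\,0}$ (single-$p$-power, obtained from $\ld\to0$ in the superscript family $\wmH^{\ld}$, where $\ld^{(q/p)-1}\to0$ legitimately since $(q/p)-1>0$). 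The appearance of $\wm_{c,0}$ in the statement of part (iv) is almost certainly a typo for one of these two objects. Your conclusion -- that (iv) reduces to the classical intercritical single-power problem on $\R^d$ and the scaling identity gives a negative-power law in $c$ -- is correct for both intended readings, but you reached it through an unjustified convention rather than by noticing the typographical inconsistency.
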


\section{Existence of normalized ground states: Proof of Theorem \ref{thm existence of ground state}}\label{sec: 3a}

\subsection{Some useful auxiliary lemmas}
Before proving Theorem \ref{thm existence of ground state}, we collect firstly some useful auxiliary lemmas which will be used throughout the paper.

\begin{lemma}[Concentration compactness, \cite{TTVproduct2014}]\label{lemma non vanishing limit}
Let $(u_n)_n$ be a bounded sequence in $H_{x,y}^1$. Assume also that there exists some $\alpha\in(0,\frac{4}{d-1})$ such that
\begin{align}
\liminf_{n\to\infty}\|u_n\|_{\alpha+2}>0.
\end{align}
Then there exist $(x_n)_n\subset \R^d$ and some $u\in H_{x,y}^1\setminus\{0\}$ such that up to a subsequence
\begin{align}
u_n(x+x_n,y)\rightharpoonup u(x,y)\quad\text{weakly in $H_{x,y}^1$}.
\end{align}
\end{lemma}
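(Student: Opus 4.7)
The plan is to carry out a classical Lions-type non-vanishing argument adapted to the waveguide $\R^d\times\T$, followed by a local Rellich compactness argument in a single elementary cell. Decompose $\R^d$ into the locally finite covering by unit cubes $\{Q_k\}_{k\in\Z^d}$ centered at the integer lattice, and consider the associated product cells $\{Q_k\times\T\}_{k\in\Z^d}$, each a bounded Lipschitz subset of $\R^d\times\T$.

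The heart of the proof is a discretized Gagliardo-Nirenberg inequality. Since $\alpha+2$ lies strictly below the $(d+1)$-dimensional Sobolev critical exponent of the bounded cell $Q_0\times\T$ (for $d\geq 2$ this exponent equals $\frac{2(d+1)}{d-1}$, while no upper restriction arises when $d=1$), one may fix some $r^{*}>\alpha+2$ for which $H^1(Q_0\times\T)\hookrightarrow L^{r^{*}}(Q_0\times\T)$. Interpolating the $L^{\alpha+2}$-norm on each cell $Q_k\times\T$ between $L^2$ and $L^{r^{*}}$, with the interpolation weight chosen so that the $H^1$-exponent equals $2$, and then summing over $k\in\Z^d$ using $\sum_k a_k b_k\leq(\sup_k a_k)(\sum_k b_k)$ together with the bounded-overlap estimate $\sum_k\|u\|_{H^1(Q_k\times\T)}^2\lesssim\|u\|_{H_{x,y}^1}^2$, I would arrive at
\begin{align*}
\|u\|_{\alpha+2}^{\alpha+2}\leq C\Big(\sup_{k\in\Z^d}\|u\|_{L^2(Q_k\times\T)}\Big)^{\sigma}\|u\|_{H_{x,y}^1}^2
\end{align*}
for some $\sigma>0$, with constant $C$ independent of $k$ because all cells are Euclidean translates of $Q_0\times\T$.

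Applying the above to $u_n$, and combining the assumed lower bound on $\|u_n\|_{\alpha+2}$ with the $H_{x,y}^1$-boundedness, one deduces the existence of $\delta>0$ and $k_n\in\Z^d$ such that $\|u_n\|_{L^2(Q_{k_n}\times\T)}\geq\delta$ for all large $n$. Letting $x_n\in\R^d$ be the center of $Q_{k_n}$ and setting $\tilde u_n(x,y):=u_n(x+x_n,y)$, the translated sequence remains bounded in $H_{x,y}^1$ and satisfies $\|\tilde u_n\|_{L^2(Q_0\times\T)}\geq\delta$. Extracting a subsequence yields $\tilde u_n\rightharpoonup u$ in $H_{x,y}^1$, and the Rellich-Kondrachov theorem applied to the bounded cell $Q_0\times\T$ upgrades this to strong convergence in $L^2(Q_0\times\T)$, forcing $\|u\|_{L^2(Q_0\times\T)}\geq\delta>0$, whence $u\neq 0$.

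The main subtlety is ensuring uniformity of the Sobolev embedding constant over the cells $Q_k\times\T$; this follows from translation invariance of the Euclidean factor, so the mixed geometry of the waveguide does not create any essential obstacle. The compact factor $\T$ enters purely through the dimension count in the Sobolev exponent, which is why the hypothesis $\alpha<\frac{4}{d-1}$ (matching the $(d+1)$-dimensional subcritical regime) is precisely what is needed.
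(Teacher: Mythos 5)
Your overall strategy (Lions-type non-vanishing via a unit-cube decomposition of $\R^d\times\T$, a discretized interpolation estimate, and Rellich compactness on a single cell) is the standard one for this lemma; the paper itself gives no proof and simply cites \cite{TTVproduct2014}, whose argument follows the same route. However, there is a concrete arithmetic gap in the interpolation step as you have written it.

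You ask to choose the interpolation weight ``so that the $H^1$-exponent equals $2$,'' i.e.\ so that
\begin{align*}
\|u\|_{L^{\alpha+2}(Q_k\times\T)}^{\alpha+2}\lesssim\|u\|_{L^2(Q_k\times\T)}^{\sigma}\,\|u\|_{H^1(Q_k\times\T)}^{2}.
\end{align*}
But once $r^*$ is fixed, the interpolation weight $\theta$ in $\frac{1}{\alpha+2}=\frac{1-\theta}{2}+\frac{\theta}{r^*}$ is forced, and the resulting $H^1$-exponent is $\theta(\alpha+2)$. Requiring $\theta(\alpha+2)=2$ forces $r^*=\frac{4}{2-\alpha}$, which for $d\geq 2$ lies below the critical exponent $\frac{2(d+1)}{d-1}$ \emph{only when} $\alpha\leq\frac{4}{d+1}$ (and for $\alpha\geq 2$ the expression $\frac{4}{2-\alpha}$ is not even a valid Lebesgue exponent). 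Since the paper applies the lemma with $\alpha\in\{p,q\}$ and $p,q>\frac{4}{d}>\frac{4}{d+1}$, your specific choice of $r^*$ is inadmissible precisely in the regime that matters here.

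The repair is short and standard. Fix any admissible $r^*$ with $\alpha+2<r^*<\frac{2(d+1)}{d-1}$ (for $d\geq 2$; for $d=1$ take any finite $r^*>\alpha+2$). Interpolation and Sobolev give, with $\theta$ determined by $r^*$,
\begin{align*}
\|u\|_{L^{\alpha+2}(Q_k\times\T)}^{\alpha+2}\lesssim\|u\|_{L^2(Q_k\times\T)}^{(1-\theta)(\alpha+2)}\,\|u\|_{H^1(Q_k\times\T)}^{\theta(\alpha+2)}.
\end{align*}
In general $\theta(\alpha+2)>2$; write $\|u\|_{H^1(Q_k\times\T)}^{\theta(\alpha+2)}=\|u\|_{H^1(Q_k\times\T)}^{\theta(\alpha+2)-2}\,\|u\|_{H^1(Q_k\times\T)}^{2}$ and bound the first factor by $\|u\|_{H_{x,y}^1}^{\theta(\alpha+2)-2}$, which is uniformly bounded because $(u_n)_n$ is bounded in $H_{x,y}^1$. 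Summing over $k$ then yields exactly the estimate you wanted, with $\sigma=(1-\theta)(\alpha+2)$. The decisive point is that $\theta<1$ (so $\sigma>0$), and one checks this holds if and only if $\alpha<\frac{4}{d-1}$ — this is where the hypothesis actually enters. The remainder of your argument (translating by the cell centers, extracting a weak limit, Rellich on $Q_0\times\T$ to rule out a vanishing limit) is correct.
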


\begin{lemma}[Scale-invariant Gagliardo-Nirenberg inequality on $\R^d\times\T$, \cite{Luo_inter}]\label{lemma gn additive}
For $\alpha\in(\frac{4}{d},\frac{4}{d-1})$ the following inequality holds for all $u\in H_{x,y}^1$:
\begin{align}
\|u\|_{\alpha+2}^{\alpha+2}\lesssim \|\nabla_x u\|_2^{\frac{\alpha d}{2}}\|u\|_2^{\frac{4-\alpha(d-1)}{2}}
(\| u\|_{2}^{\frac{\alpha}{2}}+\|\pt_y u\|_{2}^{\frac{\alpha}{2}})
\end{align}
\end{lemma}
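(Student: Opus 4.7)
The plan is to reduce the inequality to the classical Gagliardo--Nirenberg inequality on $\R^d$ via a pointwise-in-$x$ Sobolev embedding on the compact factor $\T$. First, for each fixed $x\in\R^d$ the one-dimensional Sobolev embedding $H^1(\T)\hookrightarrow L^\infty(\T)$ yields
$$\|u(x,\cdot)\|_{L_y^\infty}^{2}\lesssim \|u(x,\cdot)\|_{L_y^2}^{2}+\|u(x,\cdot)\|_{L_y^2}\|\pt_y u(x,\cdot)\|_{L_y^2}.$$
Combining this with the trivial interpolation $\int_\T |u(x,y)|^{\alpha+2}\,dy\leq \|u(x,\cdot)\|_{L_y^\infty}^{\alpha}\|u(x,\cdot)\|_{L_y^2}^{2}$ and integrating over $x$, I obtain
$$\|u\|_{\alpha+2}^{\alpha+2}\lesssim \int_{\R^d} f(x)^{\alpha+2}\,dx+\int_{\R^d} f(x)^{2+\alpha/2}\,g(x)^{\alpha/2}\,dx,$$
where $f(x):=\|u(x,\cdot)\|_{L_y^2}$ and $g(x):=\|\pt_y u(x,\cdot)\|_{L_y^2}$.

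Both integrals will be handled by the classical Gagliardo--Nirenberg inequality on $\R^d$ applied to $f$, after noting that $\|f\|_{L_x^2}=\|u\|_2$, $\|g\|_{L_x^2}=\|\pt_y u\|_2$, and that the chain rule gives $|\nabla_x f(x)|\leq \|\nabla_x u(x,\cdot)\|_{L_y^2}$, hence $\|\nabla_x f\|_{L_x^2}\leq\|\nabla_x u\|_2$. The first integral yields directly
$$\int_{\R^d} f^{\alpha+2}\,dx\lesssim \|\nabla_x u\|_2^{\alpha d/2}\|u\|_2^{(4-\alpha(d-2))/2}=\|\nabla_x u\|_2^{\alpha d/2}\|u\|_2^{(4-\alpha(d-1))/2}\|u\|_2^{\alpha/2},$$
which is the first summand in the claim. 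For the mixed integral I apply Hölder in $x$ with exponents $\tfrac{4}{4-\alpha}$ and $\tfrac{4}{\alpha}$ to put $g$ into $L_x^2$, obtaining
$$\int_{\R^d} f^{2+\alpha/2}g^{\alpha/2}\,dx\leq \|f\|_{L_x^{p^*}}^{2+\alpha/2}\|\pt_y u\|_2^{\alpha/2},\qquad p^*:=\frac{2(4+\alpha)}{4-\alpha}.$$
A second application of Gagliardo--Nirenberg on $\R^d$, with interpolation parameter $\theta=\tfrac{d\alpha}{4+\alpha}$, then gives $\|f\|_{L_x^{p^*}}^{2+\alpha/2}\lesssim \|\nabla_x u\|_2^{\alpha d/2}\|u\|_2^{(4-\alpha(d-1))/2}$, producing the second summand.

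The main---and really the only---point to check is that the intermediate exponent $p^*$ remains strictly subcritical for the $\R^d$-Sobolev embedding. A short computation shows this is equivalent to $\alpha<\tfrac{4}{d-1}$ (the restriction is vacuous for $d=1,2$), so the upper bound in the hypothesis of the lemma is precisely the sharp admissibility threshold for this scheme. Beyond this exponent bookkeeping, no genuine obstacle is anticipated.
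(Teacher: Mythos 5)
Your argument is correct for $d\geq2$: there the hypothesis $\alpha<\frac{4}{d-1}\leq4$ guarantees $4-\alpha>0$, so the H\"older exponents $\frac{4}{4-\alpha},\frac{4}{\alpha}$ and the intermediate exponent $p^*=\frac{2(4+\alpha)}{4-\alpha}$ are all well-defined, and the Gagliardo--Nirenberg bookkeeping you carry out checks out. There is, however, a genuine gap at $d=1$, where the hypothesis reads $\alpha\in(4,\infty)$: then $4-\alpha<0$ and $p^*<0$, so the H\"older pairing is void. Your parenthetical remark that ``the restriction is vacuous for $d=1,2$'' addresses only the subcriticality $p^*<2^*$ and overlooks that the H\"older step itself breaks as soon as $\alpha>4$.

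This is not salvageable by a cleverer choice of exponents within the same scheme, because the reduction to $\int_{\R}f^{2+\alpha/2}g^{\alpha/2}\,dx$, with $f(x)=\|u(x,\cdot)\|_{L^2_y}$ and $g(x)=\|\pt_y u(x,\cdot)\|_{L^2_y}$, is already too lossy once $\alpha>4$. Take $u(x,y)=f_0(x)+f_1(x)e^{iNy}$ with $f_0$ a fixed unit bump on $[0,1]$, $f_1$ a bump of height $\epsilon^{-1/2}N^{-1}$ supported on an interval of length $\epsilon$ centered where $f_0\sim1$, and $N\geq\epsilon^{-1}$. Then $\|u\|_2,\|\nabla_x u\|_2,\|\pt_y u\|_2\lesssim1$, so the right-hand side of the lemma is $O(1)$, whereas $\int_\R f^{2+\alpha/2}g^{\alpha/2}\,dx\gtrsim\epsilon^{1-\alpha/4}\to\infty$ as $\epsilon\downarrow0$ whenever $\alpha>4$. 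The underlying reason is that the fibrewise bound $\|v\|_{L^\infty(\T)}^2\lesssim\|v\|_{L^2(\T)}^2+\|v\|_{L^2(\T)}\|v'\|_{L^2(\T)}$ is very far from sharp when $v$ has a small-amplitude high-frequency component, and raising it to a power $\alpha>4$ makes this loss unaffordable. A proof that also covers $d=1$ must retain more information about the $y$-dependence of $u$ than the two scalar profiles $f,g$ alone, e.g.\ by first splitting off the $y$-mean $\mathfrak m(u)$ and exploiting the Poincar\'e inequality on the zero-mean remainder, or by a Littlewood--Paley decomposition in $y$.
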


As an immediate consequence of Lemma \ref{lemma gn additive}, we deduce the following useful properties of a minimizing sequence of the variational problem $m_c$.

\begin{corollary}\label{cor lower bound}
For any $c\in(0,\infty)$ we have $m_c\in(0,\infty)$. Moreover, there exists a bounded sequence $(u_n)_n\subset V(c)$ (recall the definition of $V(c)$ in \eqref{def vc}) such that
$m_c=E(u_n)+o_n(1)$ and
$\liminf_{n\to\infty}\|u_n\|_{q+2}>0$.
\end{corollary}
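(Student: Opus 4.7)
The plan rests on the decomposition $E(u) = \mI(u) + \frac{2}{pd}\, Q(u)$ recorded in \eqref{def of mI}, which makes $E$ and $\mI$ coincide on the Pohozaev set $V(c)$. Since $p > 4/d$ and $q > p$, all three coefficients in
\[
\mI(u) = \frac{1}{2}\|\pt_y u\|_{2}^2 + \Bigl(\frac{1}{2} - \frac{2}{pd}\Bigr)\|\nabla_x u\|_{2}^2 + \frac{1}{q+2}\Bigl(\frac{q}{p} - 1\Bigr)\|u\|_{q+2}^{q+2}
\]
are strictly positive, so $\mI \geq 0$ on $H_{x,y}^1$ and, together with the mass constraint, $\mI$ controls the full $H_{x,y}^1$-norm. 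This single observation drives all of the conclusions.

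For the upper bound $m_c < \infty$, the plan is to pick any $u_0 \in S(c)$ and apply the mass-preserving $x$-scaling $u_0^t$ of \eqref{def of scaling op}. A direct computation gives $Q(u_0^t) = t^2\|\nabla_x u_0\|_2^2 - \frac{pd}{2(p+2)} t^{pd/2}\|u_0\|_{p+2}^{p+2} - \frac{qd}{2(q+2)} t^{qd/2}\|u_0\|_{q+2}^{q+2}$. Since $pd/2, qd/2 > 2$, the lowest-order term in $t$ is $t^2\|\nabla_x u_0\|_2^2 > 0$ while the highest-order one is negative, so continuity supplies some $t_\star > 0$ with $Q(u_0^{t_\star}) = 0$, hence $u_0^{t_\star} \in V(c)$ and $m_c \leq E(u_0^{t_\star})$.

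For the bounded minimizing sequence and the non-vanishing claim, I would pick $(u_n)_n \subset V(c)$ with $E(u_n) \to m_c$; then $E(u_n) = \mI(u_n)$ together with $\mM(u_n) = c$ immediately yields $\sup_n \|u_n\|_{H_{x,y}^1} < \infty$. Absorbing the $L^2$- and $\pt_y$-factors in Lemma \ref{lemma gn additive} into this $H^1$-bound gives $\|u_n\|_{\alpha+2}^{\alpha+2} \lesssim \|\nabla_x u_n\|_2^{\alpha d/2}$ for $\alpha \in \{p,q\}$, where the implicit constant is uniform in $n$. Inserting these into the Pohozaev identity
\[
\|\nabla_x u_n\|_2^2 = \frac{pd}{2(p+2)}\|u_n\|_{p+2}^{p+2} + \frac{qd}{2(q+2)}\|u_n\|_{q+2}^{q+2}
\]
coming from $Q(u_n) = 0$, and using that both exponents $pd/2, qd/2$ exceed $2$, one obtains $\|\nabla_x u_n\|_2 \gtrsim 1$. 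The elementary interpolation $\|u_n\|_{p+2} \leq \|u_n\|_2^{\theta}\|u_n\|_{q+2}^{1-\theta}$, with $\theta \in (0,1)$ fixed by $\frac{1}{p+2} = \frac{\theta}{2} + \frac{1-\theta}{q+2}$, then rules out a subsequential $\|u_n\|_{q+2} \to 0$, for otherwise both $\|u_n\|_{p+2}$ and $\|u_n\|_{q+2}$ would vanish along that subsequence, forcing $\|\nabla_x u_n\|_2 \to 0$ through the Pohozaev identity, a contradiction. Positivity $m_c > 0$ then follows at once from $E(u_n) = \mI(u_n) \geq \frac{q/p - 1}{q+2}\|u_n\|_{q+2}^{q+2}$ and $\liminf_n \|u_n\|_{q+2} > 0$.

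The main obstacle, compared to the single-power setting of \cite{Luo_inter}, is precisely this non-vanishing step: the Pohozaev identity now mixes two competing nonlinear contributions, and ruling out their simultaneous vanishing while keeping $\|\nabla_x u_n\|_2$ bounded below is not automatic. This is exactly what the preliminary $H^1$-bound (used to absorb the $\pt_y$-dependence in Lemma \ref{lemma gn additive}) and the elementary $L^{p+2}$--$L^{q+2}$ interpolation are designed to accomplish.
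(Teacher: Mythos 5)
Your argument is correct and follows essentially the same route as the paper's own proof: you use the identity $E = \mI + \tfrac{2}{pd}Q$ so that $E = \mI$ on $V(c)$, which together with $q > p > 4/d$ gives the $H^1$-bound; you derive $\liminf\|\nabla_x u_n\|_2 > 0$ by feeding the Gagliardo--Nirenberg bound from Lemma~\ref{lemma gn additive} into the Pohozaev identity $Q(u_n)=0$ and exploiting $pd/2, qd/2 > 2$; and you then pass to $\liminf\|u_n\|_{q+2} > 0$ via $L^2$--$L^{q+2}$ interpolation, which immediately yields $m_c > 0$ through the coercivity of $\mI$. The only cosmetic difference is that you present the non-vanishing step as a contradiction argument and compute the scaling $Q(u_0^t)$ explicitly rather than citing Lemma~\ref{monotoneproperty}, but the content is identical.
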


\begin{proof}
That $m_c<\infty$ follows from $V(c)\neq\varnothing$, the latter being deduced from the scaling properties of the mapping $t\mapsto Q(u^t)$
given in Lemma \ref{monotoneproperty} below. Next, let  $(u_n)_n\subset V(c)$ be a minimizing sequence such that ${E}(u_n)=m_c+o_n(1)$. Then
\begin{align*}
\infty&>m_c+o_n(1)={E}(u_n)={E}(u_n)-\frac{2}{p d}{Q}(u_n)\\
&=\frac{1}{2}\|\pt_y u_n\|_2^2+\bg(\frac{1}{2}-\frac{2}{pd}\bg)\|\nabla_x u_n\|_2^2+\bg(\frac{q}{p}-1\bg)\frac{1}{q+2}\|u_n\|_{q+2}^{q+2}.
\end{align*}
Combining $q>p>4/d$, which in turn implies $\min\{\frac{1}{2}-\frac{2}{p d},\frac{q}{p}-1\}>0$, we infer that $(u_n)_n$ is a bounded
sequence in $H_{x,y}^1$. Now by Lemma \ref{lemma gn additive} and the fact that $p<q<4/(d-1)$ we deduce
\begin{align*}
\|\nabla_xu_n\|_2^2&=\frac{p d}{2(p+2)}\|u_n\|_{p+2}^{p+2}+\frac{q d}{2(q+2)}\|u_n\|_{q+2}^{q+2}\\
&\lesssim \|\nabla_x u_n\|_2^{\frac{p d}{2}}\|u_n\|_2^{\frac{4-p(d-1)}{2}}
(\| u_n\|_{2}^{\frac{p}{2}}+\|\pt_y u_n\|_{2}^{\frac{p}{2}})\\
&\quad +\|\nabla_x u_n\|_2^{\frac{q d}{2}}\|u_n\|_2^{\frac{4-q(d-1)}{2}}
(\| u_n\|_{2}^{\frac{q}{2}}+\|\pt_y u_n\|_{2}^{\frac{q}{2}})\\
&\lesssim \|\nabla_x u_n\|_2^{\frac{p d}{2}}+\|\nabla_x u_n\|_2^{\frac{q d}{2}},
\end{align*}
which combining $q>p>4/d$ implies
\begin{align*}
\liminf_{n\to\infty}(\|u_n\|_{p+2}^{p+2}+\|u_n\|_{q+2}^{q+2})\sim\liminf_{n\to\infty}\|\nabla_x u_n\|_2^2>0.
\end{align*}
Using also interpolation we know that there exists some $\theta\in(0,1)$ such that
\begin{align*}
1\lesssim\|u_n\|_{p+2}^{p+2}+\|u_n\|_{q+2}^{q+2}&\lesssim
\|u_n\|_{q+2}^{q+2}+\|u_n\|_2^{(p+2)(1-\theta)}\|u_n\|_{q+2}^{(p+2)\theta}\nonumber\\
&\lesssim
\|u_n\|_{q+2}^{(p+2)\theta}(1+\|u_n\|_{q+2}^{(q+2)-(p+2)\theta})\lesssim\|u_n\|_{q+2}^{(p+2)\theta}.
\end{align*}
Thus
\begin{align}
\liminf_{n\to\infty}\|u_n\|_{q+2}>0.\label{key gn inq2}
\end{align}

Summing up, we obtain
\begin{align*}
m_c&=\lim_{n\to\infty}\bg(\frac{1}{2}\|\pt_y u_n\|_2^2+\bg(\frac{1}{2}-\frac{2}{p d}\bg)\|\nabla_x u_n\|_2^2
+\bg(\frac{q}{p}-1\bg)\frac{1}{q+2}\|u_n\|_{q+2}^{q+2}\bg)\\
&\gtrsim \liminf_{n\to\infty}\|\nabla_x u_n\|_{q+2}^{q+2}\gtrsim 1,
\end{align*}
which completes the proof.
\end{proof}

\subsection{Dynamical properties of the mappings $t\mapsto \mK(u^t)$ and $c\mapsto m_c$}
We state in this subsection the dynamical properties of the mappings $t\mapsto \mK(u^t)$ and $c\mapsto m_c$, which will play a crucial role in the upcoming proofs.

\begin{lemma}[Property of the mapping $t\mapsto \mK(u^t)$ ]\label{monotoneproperty}
Let $c>0$ and $u\in S(c)$. Then the following statements hold true:
\begin{enumerate}
\item[(i)] $\frac{\partial}{\partial t}\mH(u^t)=t^{-1} Q(u^t)$ for all $t>0$.
\item[(ii)] There exists some $t^*=t^*(u)>0$ such that $u^{t^*}\in V(c)$.
\item[(iii)] We have $t^*<1$ if and only if $\mK(u)<0$. Moreover, $t^*=1$ if and only if $\mK(u)=0$.
\item[(iv)] Following inequalities hold:
\begin{equation*}
Q(u^t) \left\{
\begin{array}{lr}
             >0, &t\in(0,t^*) ,\\
             <0, &t\in(t^*,\infty).
             \end{array}
\right.
\end{equation*}
\item[(v)] $\mH(u^t)<\mH(u^{t^*})$ for all $t>0$ with $t\neq t^*$.
\end{enumerate}
\end{lemma}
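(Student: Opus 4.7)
The plan is to reduce everything to the analysis of a single scalar function of $t$. For $u\in S(c)$ fixed, I would first record the scaling behaviour of each ingredient: the mass is preserved, and
$$
\|\nabla_x u^t\|_2^2=t^2\|\nabla_x u\|_2^2,\qquad \|\pt_y u^t\|_2^2=\|\pt_y u\|_2^2,\qquad \|u^t\|_{r+2}^{r+2}=t^{rd/2}\|u\|_{r+2}^{r+2}
$$
for $r\in\{p,q\}$. Inserting these into $E$ and differentiating in $t$, the factor $\tfrac{rd}{2}t^{rd/2-1}$ coming from $t^{rd/2}$ matches term by term the coefficient structure of $Q$, yielding (i) by a direct computation.

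Next, set $h(t):=E(u^t)$. Since $p,q>4/d$ forces $rd/2>2$ for $r\in\{p,q\}$, as $t\to 0^+$ both the $t^2$ and the $t^{rd/2}$ contributions vanish and $h(t)\to\tfrac12\|\pt_y u\|_2^2$; as $t\to+\infty$ the focusing term $-\tfrac{t^{qd/2}}{q+2}\|u\|_{q+2}^{q+2}$ eventually dominates (note $u\neq 0$ together with $u\in H^1$ forces $\|\nabla_x u\|_2,\|u\|_{q+2}>0$), so $h(t)\to-\infty$. Combined with $h'(t)=t^{-1}Q(u^t)$ from (i), this produces at least one $t^*>0$ with $Q(u^{t^*})=0$, which is (ii).

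The main obstacle will be the uniqueness of $t^*$, required for the strict statements (iii)--(v). I would analyse the equation $h'(t)=0$, which after dividing by $t$ reads
$$
\|\nabla_x u\|_2^2+\tfrac{\mu pd}{2(p+2)}\|u\|_{p+2}^{p+2}\,t^{pd/2-2}=\tfrac{qd}{2(q+2)}\|u\|_{q+2}^{q+2}\,t^{qd/2-2}.
$$
In the defocusing case $\mu=-1$ the left-hand side is strictly decreasing in $t$ while the right-hand side is strictly increasing and vanishes at $t=0$, so the graphs cross exactly once. In the focusing case $\mu=+1$ both sides are strictly increasing; here I would instead show that the ratio $\mathrm{RHS}/\mathrm{LHS}$ is strictly monotone increasing on $(0,\infty)$ via a logarithmic-derivative computation, which after clearing denominators reduces to
$$
\bigl(\tfrac{qd}{2}-2\bigr)\|\nabla_x u\|_2^2>-\tfrac{(q-p)d}{2}\cdot\tfrac{pd}{2(p+2)}\|u\|_{p+2}^{p+2}\,t^{pd/2-2}.
$$
The right-hand side is non-positive and the left-hand side is strictly positive, so this inequality is immediate. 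Combined with $\mathrm{RHS}/\mathrm{LHS}\to 0$ at $t=0^+$ and $\to+\infty$ at $t=+\infty$, this yields a unique crossing in both signs of $\mu$.

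Once uniqueness of $t^*$ is secured, the remaining parts follow quickly. Part (iv) is immediate because $h'=t^{-1}Q(u^{(\cdot)})$ can change sign at most once and must go from positive to negative given the asymptotics of $h$. Part (v) follows because $h$ then has a unique critical point on $(0,\infty)$ which the asymptotics force to be a strict global maximum. Finally, (iii) is the specialisation of (iv) at $t=1$: $t^*<1$ is equivalent to $1>t^*$, i.e.\ $h'(1)<0$, i.e.\ $Q(u)=Q(u^1)<0$, and $t^*=1$ is equivalent to $Q(u)=0$.
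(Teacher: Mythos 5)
Your proof is correct and takes a related but genuinely different route from the paper. The paper defines $y(t):=\partial_t E(u^t)$, computes $y'(t)$, and argues that $y'$ is strictly monotone decreasing so that $y$ goes from positive to negative exactly once; you instead divide $y(t)$ by $t$ and study the algebraic equation $\mathrm{LHS}(t)=\mathrm{RHS}(t)$, using opposite monotonicity of the two sides when $\mu=-1$ and strict monotonicity of the ratio $\mathrm{RHS}/\mathrm{LHS}$ when $\mu=1$. The two strategies are close in spirit, but yours is the more robust one: the paper's displayed formula for $y(t)$ carries minus signs on both nonlinear terms, i.e.\ corresponds to $\mu=-1$, and its key assertion ``$y'(t)$ is strictly monotone decreasing on $(0,\infty)$'' is simply false when $\mu=1$ (the $p$-term in $y'$ then has a positive coefficient and is increasing in $t$). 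Since the lemma is invoked in the paper for both signs of $\mu$ (e.g.\ in the proof of Theorem~1.1(ii) and Lemma~4.3), your ratio argument closes a genuine gap in the displayed proof. One small point worth making explicit in part (ii): knowing only $h(0^+)=\tfrac12\|\partial_y u\|_2^2$ and $h(+\infty)=-\infty$ is not by itself enough to produce an interior critical point (if $\partial_y u=0$ that boundary value is $0$); you should add the one-line observation that $h'(t)=t\|\nabla_x u\|_2^2+O(t^{pd/2-1})>0$ for $t$ small, which forces $h$ to increase near $0$ and hence to attain its maximum in the interior. You effectively use this in parts (iv)--(v), so it is an omission of statement rather than of idea.
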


\begin{proof}
(i) follows from direct calculation. Now define $y(t):= \frac{\partial}{\partial t}\mH(u^t)$. Then
\begin{align*}
y(t)&=t\|\nabla_x u\|_2^2-\frac{ p d}{2(p+2)}t^{\frac{p d}{2}-1}\|u\|_{p+2}^{p+2}
-\frac{q d}{2(q+2)}t^{\frac{q d}{2}-1}\|u\|_{q+2}^{q+2},\\
y'(t)&=\|\nabla_x u\|_2^2-\frac{2 p d(p d-2)}{4(p+2)}t^{\frac{p d}{2}-2}\|u\|_{p+2}^{p+2}
-\frac{2q d(q d-2)}{4(q+2)}t^{\frac{q d}{2}-2}\|u\|_{q+2}^{q+2}.
\end{align*}
Using $q>p>4/d$ we infer that $y'(0)=\|\nabla_x u\|_2^2>0$, $y'(t)\to -\infty$ as $t\to\infty$ and $y'(t)$ is strictly monotone decreasing on
$(0,\infty)$. Thus there exists some $t_0>0$ such that $y'(t)$ is positive on $(0,t_0)$ and negative on $(t_0,\infty)$. Consequently, we conclude
that $y(t)$ has a zero at $t^*>t_0$, $y(t)$ is positive on $(0,t^*)$ and negative on $(t^*,\infty)$. (ii) and (iv) now follow from the fact
$$y(t)=\frac{\partial \mH(u^t)}{\partial t}=\frac{Q(u^t)}{t}.$$
For (iii), assume first $\mK(u)<0$. Then
$$0>\mK(u)=\frac{Q(u^1)}{1}=y(1),$$
which is only possible as long as $t^*<1$. Conversely, let $t^*<1$. Then using the fact that $y(t)$ is monotone decreasing on $(t^*,\infty)$ we
obtain
$$\mK(u)=y(1)<y(t^*)<0. $$
This completes the proof of (iii). To see (v), integration by parts yields
$$ \mH(u^{t^*})=\mH(u^t)+\int_t^{t^*}y(s)\,ds.$$
Then (v) follows from the fact that $y(t)$ is positive on $(0,t^*)$ and $y(t)$ is negative on $(t^*,\infty)$.
\end{proof}

\begin{lemma}[Property of the mapping $c\mapsto m_c$]\label{monotone lemma}
The mapping $c\mapsto m_c$ is lower semicontinuous and monotone decreasing on $(0,\infty)$.
\end{lemma}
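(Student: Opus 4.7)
I would derive both assertions from the projection scheme of Lemma \ref{monotoneproperty}, combined with carefully chosen rescalings or perturbations.

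\textbf{Lower semicontinuity.} The plan is as follows: given $c_n\to c>0$, select $u_n\in V(c_n)$ with $E(u_n)\le m_{c_n}+1/n$, which by Corollary \ref{cor lower bound} is bounded in $H^1_{x,y}$ and satisfies $\liminf_n \|u_n\|_{q+2}>0$. Rescaling $\tilde u_n:=\sqrt{c/c_n}\,u_n$ gives an element of $S(c)$ with $Q(\tilde u_n)\to 0$, since $Q(u_n)=0$ and the rescaling factor tends to $1$. By Lemma \ref{monotoneproperty}(ii) there is a unique $t_n^*>0$ with $\tilde u_n^{t_n^*}\in V(c)$, so $m_c\le E(\tilde u_n^{t_n^*})$. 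If I can show $t_n^*\to 1$, the continuity of the $x$-scaling then yields $E(\tilde u_n^{t_n^*})=E(u_n)+o(1)$, and taking $\liminf$ finishes the proof.

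\textbf{Monotone decay.} For $c_1<c_2$ and $\varepsilon>0$, the plan is to start with $u\in V(c_1)$ satisfying $E(u)<m_{c_1}+\varepsilon$ and add mass $c_2-c_1$ to $u$ far along the $x$-direction in a very spread-out fashion. Concretely, I would fix $\varphi\in C_c^\infty(\R^d)$ with $\|\varphi\|_{L^2(\R^d)}^2=(c_2-c_1)/(2\pi)$ and define the $y$-independent function
\[\psi_R(x,y):=R^{-d/2}\varphi((x-R^2 e_1)/R).\]
Then $M(\psi_R)=c_2-c_1$, $\pt_y\psi_R\equiv 0$, $\|\nabla_x\psi_R\|_2^2\lesssim R^{-2}$, and $\|\psi_R\|_{r+2}^{r+2}\lesssim R^{-rd/2}$ for $r\in\{p,q\}$; since $rd/2>2$, all three tend to $0$. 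Because $\psi_R\to 0$ pointwise and weakly in $H^1_{x,y}$, a direct computation will give
\[M(u+\psi_R)=c_2+o(1),\quad E(u+\psi_R)=E(u)+o(1),\quad Q(u+\psi_R)=o(1).\]
After a mass-correcting rescaling $v_R:=\sqrt{c_2/M(u+\psi_R)}\,(u+\psi_R)\in S(c_2)$, Lemma \ref{monotoneproperty}(ii) supplies $t_R^*>0$ with $v_R^{t_R^*}\in V(c_2)$, and the same reasoning as above, combined with $t_R^*\to 1$, yields $m_{c_2}\le E(v_R^{t_R^*})=E(u)+o(1)<m_{c_1}+\varepsilon+o(1)$. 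Sending $R\to\infty$ and then $\varepsilon\to 0^+$ gives $m_{c_2}\le m_{c_1}$.

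\textbf{Main difficulty.} The delicate step in both parts will be the convergence of the projection parameter to $1$ along a sequence whose underlying functions need not converge strongly. I plan to handle it by analysing the single-variable equation $Q(\cdot^t)=0$: the uniform $H^1$ bound together with the non-triviality estimate $\liminf\|\cdot\|_{q+2}>0$ inherited from Corollary \ref{cor lower bound} guarantees that the derivative in $t$ at $t=1$ is uniformly bounded away from $0$, and an implicit-function-type argument then forces the root to converge to $1$.
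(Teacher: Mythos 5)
Your proposal is correct in substance and follows the same broad strategy as the paper (projection onto the Nehari-type manifold $V(c)$ via the scaling $t\mapsto u^t$, adding spread-out mass at infinity to increase $M$ without affecting $E$), but the technical packaging is genuinely different in two places, and one claim in your ``main difficulty'' resolution deserves a more careful justification than you gave.

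For the monotonicity step, the paper first truncates $u_1$ to a compactly supported $\tilde u_{1,\delta}=\eta(\delta x)u_1$, places the bump $v_0$ on a disjoint annulus, and then flattens it via $v_0^\lambda$ with $\lambda\to 0$; all $L^r$ and $H^1$ norms of $w_\lambda=\tilde u_{1,\delta}+v_0^\lambda$ decompose exactly because the supports are disjoint. You instead keep $u$ intact and park the bump $\psi_R$ at distance $R^2$ with spread $R$, then use $\|\nabla_x\psi_R\|_2,\|\psi_R\|_{p+2},\|\psi_R\|_{q+2}\to 0$ together with weak convergence $\psi_R\rightharpoonup 0$ to kill the cross terms. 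Both routes give $M(w)=c_2+o(1)$ and $E(w)=E(u)+o(1)$; yours trades the cut-off step for a weak-convergence argument. For the control of the projection parameter, the paper does not track $t^*$ at all: it introduces $f(a,b,c)=\max_{t>0}\{at^2-bt^{pd/2}-ct^{qd/2}\}$ and shows, via the implicit function theorem, that $f$ is continuous, which automatically yields $\max_tE(\cdot^t)\to\max_tE(u^t)=E(u)$ without naming $t^*$. You track $t^*$ directly. This works, but your justification that ``the derivative in $t$ at $t=1$ is uniformly bounded away from $0$'' needs to be made quantitative. Writing $y_n(t)=\frac{\partial}{\partial t}E(v_n^t)$ and using $y_n(1)=Q(v_n)=o(1)$, one can check $y_n'(1)=y_n(1)-\frac{pd(pd/2-2)}{2(p+2)}\|v_n\|_{p+2}^{p+2}-\frac{qd(qd/2-2)}{2(q+2)}\|v_n\|_{q+2}^{q+2}$, and since $pd/2>2$, $qd/2>2$ and $\liminf\|v_n\|_{q+2}>0$ (from Corollary \ref{cor lower bound}), indeed $\limsup y_n'(1)<0$. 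So the claim is true, but it is not a formal consequence of boundedness plus non-triviality alone; it uses the intercritical exponent ranges $pd/2,qd/2>2$ in an essential way. Alternatively, one can argue via locally uniform convergence of $y_n$ to a limit $y$ with $y(1)=0$ and a unique sign-changing zero, which is closer in spirit to what the paper's function $f$ buys you for free. Both routes are fine once these details are supplied.
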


\begin{proof}
Define the functions $f$ and $g$ by
\begin{align*}
f(a,b,c):=\max_{t>0}\{at^2-bt^{\frac{p d}{2}}-ct^{\frac{q d}{2}}\}=:\max_{t>0}g(t,a,b,c).
\end{align*}
We first infer the continuity of the function $f$ on $(0,\infty)^3$. By direct calculation it is easy to verify that for given $a,b,c>0$ there
exists a unique $t_0\in(0,\infty)$ such that $\pt_t g(t_0,a,b,c)=0$ and $\pt_t^2 g(t_0,a,b,c)<0$, hence $f(a,b,c)=g(t_0,a,b,c)$. Thus for given
$(a_0,b_0,c_0)\in(0,\infty)^3$, using the implicit function theorem we may interpret the function $f$ as
\[f(a,b,c)=g(h(a,b,c),a,b,c)\]
with some continuous function $h$ for points $(a,b,c)$ lying in a neighborhood of $(a_0,b_0,c_0)$. This in turn proves the continuity of the
function $f$.

We now show the monotonicity of $c\mapsto m_c$. It suffices to show that for any $0<c_1<c_2<\infty$ and $\vare>0$ we have
\begin{align*}
m_{c_2}\leq m_{c_1}+\vare.
\end{align*}
By the definition of $m_{c_1}$ there exists some $u_1\in V(c_1)$ such that
\begin{align}\label{pert 2}
\mH(u_1)\leq m_{c_1}+\frac{\vare}{2}.
\end{align}
Let $\eta\in C^{\infty}_c(\R^d;[0,1])$ be a cut-off function such that $\eta=1$ for $|x|\leq 1$ and $\eta=0$ for $|x|\geq 2$. For $\delta>0$,
define
\begin{equation*}
\tilde{u}_{1,\delta}(x,y):= \eta(\delta x)\cdot u_1(x,y).
\end{equation*}
Using dominated convergence theorem it is easy to verify that $\tilde{u}_{1,\delta}\to u_1$ in $H_{x,y}^1$ as $\delta\to 0$. Therefore,
\begin{align*}
\|\nabla_{x,y}\tilde{u}_{1,\delta}\|_2&\to \|\nabla_{x,y} u_1\|_2, \\
\|\tilde{u}_{1,\delta}\|_p&\to \| u_1\|_p
\end{align*}
for all $p\in[2,2+\frac{4}{d-1})$ as $\delta\to 0$. Combining the continuity of $f$ we conclude that
\begin{equation}\label{pert 1}
\begin{aligned}
\max_{t>0}\mH(\tilde{u}^t_{1,\delta})
&=\max_{t>0}\bg\{\frac{t^2}{2}\|\nabla_{x}\tilde{u}_{1,\delta}\|_2^2-\frac{t^{\frac{p d}{2}}}{p+2}\|\tilde{u}_{1,\delta}\|_{p+2}^{p+2}
-\frac{t^{\frac{q d}{2}}}{q+2}\|\tilde{u}_{1,\delta}\|_{q+2}^{q+2}
\bg\}+\frac{1}{2}\|\pt_y \tilde{u}_{1,\delta}\|_2^2\\
&\leq \max_{t>0}\bg\{\frac{t^2}{2}\|\nabla_{x}u_1\|_2^2-\frac{t^{\frac{p d}{2}}}{p+2}\|u_1\|_{p+2}^{p+2}
-\frac{t^{\frac{q d}{2}}}{q+2}\|u_1\|_{q+2}^{q+2}
\bg\}+\frac{1}{2}\|\pt_y u_1\|_2^2+\frac{\vare}{4}\\
&=\max_{t>0}\mH( u^t_1)+\frac{\vare}{4}
\end{aligned}
\end{equation}
for sufficiently small $\delta>0$. Now let $v\in C_c^\infty(\R^d)$ with $\mathrm{supp}\,v\subset
B(0,4\delta^{-1}+1)\backslash B(0,4\delta^{-1})$ and define
\begin{equation*}
v_0:= \frac{(c_2-\mM(\tilde{u}_{1,\delta}))^{\frac{1}{2}}}{\mM(v)^{\frac{1}{2}}}\,v.
\end{equation*}
Notice that $v_0$ and $\tilde{u}_{1,\delta}$ have compact supports, which also implies $\mM(v_0)=c_2-\mM(\tilde{u}_{1,\delta})$. Define
\begin{align*}
w_\ld:=\tilde{u}_{1,\delta}+v_0^\ld
\end{align*}
with some to be determined $\ld>0$. Then
\begin{align*}
\|w_\ld\|^p_p=\|\tilde{u}_{1,\delta}\|^p_p+\| v_0^\ld\|^p_p
\end{align*}
for all $p\in [2,2+\frac{4}{d-1})$. Particularly, $\mM(w_\ld)=c_2$. Since $v_0$ is independent of $y\in\T$, we also infer that
\begin{align*}
\|\nabla_{x} w_\ld\|_2&\to\|\nabla_{x} \tilde{u}_{1,\delta}\|_2,\\
\|\pt_y w_\ld\|_2&=\|\pt_y \tilde{u}_{1,\delta}\|_2,\\
\| w_\ld\|_p&\to\| \tilde{u}_{1,\delta}\|_p
\end{align*}
for all $p\in(2,2+\frac{4}{d-1})$ as $\ld\to 0$. Using the continuity of $f$ once again we obtain
\begin{align*}
\max_{t>0}\mH(w^t_\ld)\leq \max_{t>0}\mH(\tilde{u}^t_{1,\delta})+\frac{\vare}{4}
\end{align*}
for sufficiently small $\ld>0$. Finally, combing \eqref{pert 2} and \eqref{pert 1} we infer that
\begin{align*}
m_{c_2}\leq \max_{t>0}\mH(w^t_\lambda)\leq \max_{t>0}\mH(\tilde{u}^t_{1,\delta})+\frac{\varepsilon}{4}\leq
\max_{t>0}\mH(u^t_1)+\frac{\varepsilon}{2}=\mH(u_1)+\frac{\varepsilon}{2}\leq m_{c_1}+\varepsilon,
\end{align*}
which implies the monotonicity of $c\mapsto m_c$ on $(0,\infty)$.

Next, we show the lower semicontinuity of the curve $c\mapsto m_c$. Since $c\mapsto m_c$ is non-increasing, it suffices to show that for any
$c\in(0,\infty)$ and any sequence $c_n\downarrow c$ we have
\begin{align*}
m_c\leq \lim_{n\to \infty}m_{c_n}.
\end{align*}
Let $\vare>0$ be an arbitrary positive number. By the definition of $m_{c_n}$ we can find some $u_n\in V(c_n)$ such that
\begin{align}\label{pert 3}
\mH(u_n)\leq m_{c_n}+\frac{\vare}{2}\leq m_c+\frac{\vare}{2}.
\end{align}
We define $\tilde{u}_n=(c_n^{-1}c)^{\frac{1}{2}} \cdot u_n:=\rho_n u_n$. Then $\mM(\tilde{u}_n)=c$ and $\rho_n\uparrow 1$. Since $u_n\in
V(c_n)$, we obtain
\begin{align*}
m_{c}+\frac{\vare}{2}&\geq m_{c_n}+\frac{\vare}{2}\geq \mH(u_n)=\mH(u_n)-\frac{2}{p d}\mK(u_n)\nonumber\\
&=\frac{1}{2}\|\pt_y u_n\|_2^2+\bg(\frac{1}{2}-\frac{2}{pd}\bg)\|\nabla_x u_n\|_2^2+\bg(\frac{q}{p}-1\bg)\frac{1}{q+2}\|u_n\|_{q+2}^{q+2}.
\end{align*}
Thus $(u_n)_n$ is bounded in $H^1_{x,y}$ and up to a subsequence we infer that there exist $A,B,C\geq 0$ such that
\begin{align*}
\|\nabla_xu_n\|_2^2=A+o_n(1),\quad\|\pt_y u_n\|_2^2=B+o_n(1),\quad\|u_n\|_{\alpha+2}^{\alpha+2}=C_\alpha+o_n(1)
\end{align*}
with $\alpha\in\{p,q\}$. Arguing as in the proof of Corollary \ref{cor lower bound}, we may use the fact $\mK(u_n)=0$ and Lemma \ref{lemma gn
additive} to deduce $A,C_\alpha>0$ and by previous arguments we know that $f$ is continuous at the point $(A,C_p,C_q)$. Using also the fact
that $\rho_n\uparrow 1$ we conclude that
\begin{align*}
m_{c}&\leq \max_{t>0}\mH(\tilde{u}^t_n)
=\max_{t>0}\bg\{\frac{t^2\rho_n^2}{2}\|\nabla_x{u}_{n}\|_2^2
- \sum_{\alpha\in\{p,q\}}\frac{t^{\frac{\alpha d}{2}}\rho_n^{\alpha+2}}{\alpha+2}\|{u}_{n}\|_{\alpha+2}^{\alpha+2}\bg\}
+\frac{\rho_n^2}{2}\|\pt_y u_n\|_2^2\nonumber\\
&\leq\max_{t>0}\bg\{\frac{t^2A}{2}
- \sum_{\alpha\in\{p,q\}}\frac{t^{\frac{\alpha d}{2}}C_\alpha}{\alpha+2}\bg\}
+\frac{1}{2}\|\pt_y u_n\|_2^2+\frac{\vare}{4}\nonumber\\
&\leq \max_{t>0}\bg\{\frac{t^2}{2}\|\nabla_x u_n\|_2^2
- \sum_{\alpha\in\{p,q\}}\frac{t^{\frac{\alpha d}{2}}}{\alpha+2}\| u_n\|_{\alpha+2}^{\alpha+2}\bg\}
+\frac{1}{2}\|\pt_y u_n\|_2^2+\frac{\vare}{2}\nonumber\\
&=\max_{t>0}\mH(u^t_n)+\frac{\vare}{2}=\mH(u_n)+\frac{\vare}{2}\leq m_{c_n}+\vare
\end{align*}
by choosing $n$ sufficiently large. The continuity claim follows from the arbitrariness of $\vare$.
\end{proof}

\subsection{Mountain pass geometry of $E(u)$ on $S(c)$}
As already mentioned in the introductory section, in the waveguide setting, it is {\it a priori} unclear whether the critical points of the variational problem $m_c$ correspond to an elliptic problem, leading to possible failure of applying the Pohozaev's identity to infer that a critical point of $m_c$ is also a solution of the stationary equation \eqref{nls2}. We shall invoke a deformation argument in \cite{Bellazzini2013} to solve this issue.

We begin with defining the mountain pass geometry of $E(u)$ on $S(c)$

\begin{definition}[Mountain pass geometry of $E(u)$ on $S(c)$]\label{definiton of mp geometry}
We say that $\mH(u)$ has a mountain pass geometry on $S(c)$ at the level $\gamma_c$ if there exists some $k>0$ and $\vare\in(0,m_c)$ such that
\begin{align}
\gamma_c:=\inf_{g\in\Gamma(c)}\max_{t\in[0,1]}\mH(g(t))>\max\{\sup_{g\in\Gamma(c)}\mH(g(0)),\sup_{g\in\Gamma(c)}\mH(g(1))\},\label{def of
gammac}
\end{align}
where
\begin{align*}
\Gamma(c):=\{g\in C([0,1];S(c)):g(0)\in A_{k,\vare},\mH(g(1))\leq 0\}
\end{align*}
and
\begin{align*}
A_{k,\vare}:=\{u\in S(c):\|\nabla_{x} u\|_2^2\leq k,\|\pt_y u\|_2^2 \leq 2(m_c-\vare)\}.
\end{align*}
\end{definition}

The following lemma establishes the fact that $m_c$ characterizes the mountain pass level of $E(u)$ on $S(c)$.

\begin{lemma}\label{lemma existence mountain pass}
There exist $k>0$ and $\vare\in(0,m_c)$ such that
\begin{itemize}
\item[(i)]$m_c=\gamma_c$ holds.
\item[(ii)]$\mH(u)$ has a mountain pass geometry on $S(c)$ at the level $m_c$ in the sense of Definition \ref{definiton of mp geometry}.
\end{itemize}
\end{lemma}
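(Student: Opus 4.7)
My plan is to prove (i) and (ii) simultaneously by choosing $k$ and $\varepsilon$ appropriately small, and then establishing four facts: (a) $\Gamma(c)\ne\varnothing$, (b) $\gamma_c\le m_c$, (c) $\gamma_c\ge m_c$, and (d) the mountain-pass level strictly dominates the endpoint suprema. Both steps (a)--(b) rest on the scaling $u\mapsto u^t$ applied to a minimizing sequence, while (c) will follow from an intermediate-value argument for $Q\circ g$.

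\emph{Calibration of $k$ and $\varepsilon$.} For $u\in A_{k,\varepsilon}$ we have $\|u\|_2^2=c$ and $\|\pt_y u\|_2^2\le 2m_c$, so Lemma \ref{lemma gn additive} yields $\|u\|_{\alpha+2}^{\alpha+2}\lesssim_{c,m_c}\|\nabla_x u\|_2^{\alpha d/2}$ for $\alpha\in\{p,q\}$. Since $pd/2,qd/2>2$, for $k$ sufficiently small every $u\in A_{k,\varepsilon}$ satisfies
\[
Q(u)\ge \tfrac{1}{2}\|\nabla_x u\|_2^2,\qquad
\mH(u)\le \tfrac{k}{2}+(m_c-\varepsilon)+C(k^{pd/4}+k^{qd/4})\le m_c-\tfrac{\varepsilon}{2}.
\]
Because any $u\in S(c)$ with $c>0$ cannot be $x$-independent, $\|\nabla_x u\|_2>0$ and so $Q(u)>0$ on $A_{k,\varepsilon}$.

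\emph{Construction of competing paths and the bound $\gamma_c\le m_c$.} Take the minimizing sequence $(u_n)_n\subset V(c)$ from Corollary \ref{cor lower bound}; it is bounded in $H^1_{x,y}$ and, by the argument in that proof, satisfies $\|\nabla_x u_n\|_2^2\ge c_0>0$ and $\|u_n\|_{q+2}\ge c_1>0$ uniformly. The identity
\[
\mH(u_n)=\tfrac12\|\pt_y u_n\|_2^2+\bg(\tfrac12-\tfrac{2}{pd}\bg)\|\nabla_x u_n\|_2^2+\tfrac{1}{q+2}\bg(\tfrac{q}{p}-1\bg)\|u_n\|_{q+2}^{q+2}
\]
(a direct consequence of $Q(u_n)=0$) then forces $\|\pt_y u_n\|_2^2\le 2m_c-2\delta+o_n(1)$ for some fixed $\delta>0$. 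Fix $\varepsilon\in(0,\delta)$ even smaller than above. For $t_0>0$ small so that $t_0^2\sup_n\|\nabla_x u_n\|_2^2\le k$, and $t_1>0$ large so that $\mH(u_n^{t_1})\le 0$ (the $-t^{qd/2}\|u_n\|_{q+2}^{q+2}/(q+2)$ contribution dominates eventually, and $c_1$ is uniform in $n$), I set $g_n(s):=u_n^{t_0+s(t_1-t_0)}$. Then $g_n(0)\in A_{k,\varepsilon}$ for $n$ large, $\mH(g_n(1))\le 0$, and Lemma \ref{monotoneproperty}(v) (recall $t^*(u_n)=1$) gives $\max_{s\in[0,1]}\mH(g_n(s))=\mH(u_n)=m_c+o_n(1)$. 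This shows $\Gamma(c)\ne\varnothing$ and $\gamma_c\le m_c$.

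\emph{The reverse bound $\gamma_c\ge m_c$.} For any $g\in\Gamma(c)$, continuity of $g$ in $H^1_{x,y}$ makes $s\mapsto Q(g(s))$ continuous. At $s=0$ we just showed $Q(g(0))>0$. At $s=1$, assuming instead $Q(g(1))\ge 0$ and using the same identity as above gives $\mH(g(1))\ge\tfrac{2}{pd}Q(g(1))\ge 0$, with equality forcing $g(1)=0$; but $g(1)\in S(c)$, so this contradicts $\mH(g(1))\le 0$. Hence $Q(g(1))<0$, and by the intermediate value theorem some $s_\star\in(0,1)$ satisfies $Q(g(s_\star))=0$, i.e.\ $g(s_\star)\in V(c)$. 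Therefore $\max_{s\in[0,1]}\mH(g(s))\ge\mH(g(s_\star))\ge m_c$, and taking the infimum yields $\gamma_c\ge m_c$. Combined with step (b), $m_c=\gamma_c$, which is (i); the separation (d) then follows from $\sup\mH(g(0))\le m_c-\varepsilon/2<m_c$ and $\sup\mH(g(1))\le 0<m_c$, giving (ii).

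\emph{Main obstacle.} The delicate point is coordinating the three small parameters $k,\varepsilon,t_0$ with the properties of the minimizing sequence: the scaling $u\mapsto u^t$ is anisotropic and leaves $\|\pt_y u\|_2$ unchanged, so the admissibility $g_n(0)\in A_{k,\varepsilon}$ can only be achieved after using the uniform lower bound $\|\nabla_x u_n\|_2^2\gtrsim 1$ from Corollary \ref{cor lower bound} to ensure a uniform strict inequality $\|\pt_y u_n\|_2^2<2m_c$. The rest is a careful but standard bookkeeping of Gagliardo--Nirenberg estimates.
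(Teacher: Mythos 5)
Your proof is correct and follows essentially the same strategy as the paper: choose $k,\varepsilon$ small enough so that $Q>0$ and $\mH<m_c$ on $A_{k,\varepsilon}$, build paths by the anisotropic scaling $u\mapsto u^t$ applied to a minimizing sequence (the paper takes a single near-minimizer, you take a sequence of paths — an inessential difference), and then use the intermediate-value argument $Q(g(0))>0$, $Q(g(1))<0$ to show every path in $\Gamma(c)$ hits $V(c)$. Your treatment of the endpoint case in the reverse inequality is actually slightly more careful than the paper's, since you correctly handle the equality $\mH(g(1))=0$ by observing it would force $g(1)=0$, contradicting $g(1)\in S(c)$, whereas the paper implicitly writes $\mH(g(1))<0$ although the definition of $\Gamma(c)$ only gives $\mH(g(1))\le 0$.
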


\begin{proof}
We firstly prove that by choosing $k$ sufficiently small we have $\mK(u)>0$ for all $u\in A_{k,\vare}$, where $k$ is independent of the choice
of  $\vare\in (0,m_c)$. Indeed, by Lemma \ref{lemma gn additive}, the fact that $\mM(u)=c$, $\|\pt_y u\|_2^2< 2m_c$ for $u\in A_{k,\vare}$ and
$q>p>4/d$ we obtain
\begin{align*}
\mK(u)&=\frac{1}{2}\|\nabla_x u\|_2^2-\frac{p d}{2(p+2)}\|u\|_{p+2}^{p+2}-\frac{q d}{2(q+2)}\|u\|_{q+2}^{q+2}\nonumber\\
&\geq\frac{1}{2}\|\nabla_x u\|_2^2-C(\|\nabla_x u\|_2^{\frac{p d}{2}}+\|\nabla_x u\|_2^{\frac{q d}{2}})>0
\end{align*}
as long as $\|\nabla_x u\|_2^2\in(0,k)$ for some sufficiently small $k$. Next we construct the number $\vare$. Arguing as in \eqref{key gn
inq2} we know that there exists some $\beta=\beta(c)>0$ such that if $(u_n)_n\subset V(c)$ is a minimizing sequence for $m_c$, then
\begin{align}
\liminf_{n\to\infty}\bg(\bg(\frac{1}{2}-\frac{2}{p d}\bg)\|\nabla_x u_n\|_2^2\bg)\geq \beta.\label{lower half beta}
\end{align}
We may shrink $\beta$ further such that $\beta<4m_c$. Hence for any minimizing sequence $(u_n)_n$ we must have
\begin{equation}\label{bounded mc sec2}
\begin{aligned}
m_c+\frac{\beta}{4}&\geq \frac{1}{2}\|\pt_y u_n\|_2^2+\bg(\frac{1}{2}-\frac{2}{p d}\bg)\|\nabla_x u_n\|_2^2
+\bg(\frac{q}{p}-1\bg)\frac{1}{q+2}\|u_n\|_{q+2}^{q+2}\\
&>\frac{1}{2}\|\pt_y u_n\|_2^2+\frac{\beta}{2}
\end{aligned}
\end{equation}
for all sufficiently large $n$. Now set $\vare=\frac{\beta}{4}$. For $u\in A_{k,\vare}$, using also Lemma \ref{lemma gn additive}
we infer that
\begin{align*}
\mH(u)&\leq \frac{1}{2}\|\pt_y u\|_2^2+\frac{1}{2}\|\nabla_x u\|_2^2+C(\|\nabla_x u\|_2^{\frac{p d}{2}}+\|\nabla_x u\|_2^{\frac{q
d}{2}})\nonumber\\
&\leq m_c-\vare+\bg(\frac{1}{2}k+Ck^{\frac{p d}{4}}+Ck^{\frac{q d}{4}}\bg).
\end{align*}
We now choose $k=k(\vare)$ sufficiently small (without changing the fact that $Q(u)>0$ for $u\in A_{k,\vare}$) such that
$\frac{1}{2}k+Ck^{\frac{\alpha d}{4}}<\frac{\vare}{2}$. For this choice of $k$ and $\vare$ we have $\mH(u)<m_c-\vare/2<m_c$ for all $u\in
A_{k,\vare}$ and by definition, (ii) follows immediately from (i).

It is left to show (i). Let $(u_n)_n$ be the given minimizing sequence satisfying \eqref{lower half beta}. Let $u=u_n$ for some (to
be determined) sufficiently large $n\in\N$. For any $\kappa\in(0,\beta/4)$ we can choose $n$ sufficiently large such that $\mH(u)\leq
m_c+\kappa$ and $(\frac{1}{2}-\frac{2}{p d})\|\nabla_x u\|_2^2\geq \beta/2$. Then by \eqref{bounded mc sec2} we know that $\|\pt_y u\|_2^2\leq
2(m_c-\vare)$ for all $\kappa\in(0,\beta/4)$. It is easy to check that $\|\pt_y(u^t)\|_2^2=\|\pt_y u\|_2^2$ for all $t\in(0,\infty)$ and
$\|\nabla_x (u^t)\|_2^2=t^2\|\nabla_x u\|_2^2\to 0$ as $t\to 0$. We then fix some $t_0>0$ sufficiently small such that
$\|\nabla_x(u^{t_0})\|_2^2<k$, which in turn implies that $(u^{t_0})\in A_{k,\vare}$. On the other hand,
\begin{align*}
\mH(u^t)=\frac{1}{2}\|\pt_y u\|_2^2 +\frac{t^2}{2}\|u\|_2^2-\frac{t^{\frac{p d}{2}}}{p+2}\|u\|_{p+2}^{p+2}
-\frac{t^{\frac{q d}{2}}}{q+2}\|u\|_{q+2}^{q+2}\to -\infty
\end{align*}
as $t\to\infty$. We then fix some $t_1$ sufficiently large such that $\mH(u^{t_1})<0$. Now define $g\in C([0,1];S(c))$ by
\begin{align}\label{curve}
g(t):=u^{t_0+(t_1-t_0)t}.
\end{align}
Then $g\in \Gamma(c)$. By definition of $\gamma_c$ and Lemma \ref{monotoneproperty} we have
\begin{align*}
\gamma_c\leq \max_{t\in[0,1]}\mH(g(t))=\mH(u)\leq m_c+\kappa.
\end{align*}
Since $\kappa$ can be chosen arbitrarily small, we conclude that $\gamma_c\leq m_c$. On the other hand, by our choice of $k$ we already know
that for any $g\in \Gamma(c)$ we have $\mK(g(0))>0$. We now prove $\mK(g(1))<0$ for any $g\in \Gamma(c)$. Assume the contrary that there exists
some $g\in\Gamma(c)$ such that $\mK(g(1))\geq 0$. Then
\begin{align*}
0&>\mH(g(1))\geq \frac{1}{2}\|\nabla_x g(1)\|_2^2-\frac{1}{p+2}\|g(1)\|_{p+2}^{p+2}-\frac{1}{q+2}\|g(1)\|_{q+2}^{q+2}\nonumber\\
&\geq \bg(\frac{1}{2}-\frac{2}{p d}\bg)\|\nabla_x g(1)\|_2^2+\bg(\frac{q}{p}-1\bg)\frac{1}{q+2}\|g(1)\|_{p+2}^{p+2}\geq 0,
\end{align*}
a contradiction. Next, by continuity of $g$ there exists some $t\in(0,1)$ such that $\mK(g(t))=0$. Therefore
\begin{align*}
\max_{t\in[0,1]}\mH(g(t))\geq m_c.
\end{align*}
Taking infimum over $g\in\Gamma(c)$ we deduce $\gamma_c\geq m_c$, which completes the desired proof.
\end{proof}

\begin{remark}\label{technical reason}
By technical reason we will also shrink $\vare$ in the Definition \ref{definiton of mp geometry} if necessary such that $\vare\leq
1-\frac{1}{2c_{p,q}}$, where
\[c_{p,q}:=\frac12+\frac12\bg(d-\frac4p\bg)\bg(\frac{2q}{p}+\frac{4}{p}-d\bg)^{-1}.\]
The purpose of this choice of $\vare$ will become clear in the upcoming proof of Lemma \ref{minimizer is solution}. Notice also that
$1-\frac{1}{2c_{p,q}}\in(0,1)$ is equivalent to $c_{p,q}>\frac12$, which by using $p>4/d$ is also equivalent to
$2q/p>d-4/p$. However, this is always satisfied since $p<q<4(d-1)^{-1}$.
\end{remark}

\subsection{Characterization of an optimizer as a standing wave equation}
We now prove that a minimizer of $m_c$ is also a solution of the stationary equation \eqref{nls2}. First we prove a different characterization of $m_c$ that will be more useful in later analysis.
\begin{lemma}\label{3.7lem}
For $c>0$, define
\begin{align}
\tilde{m}_c:=\inf\{\mI(u):u\in S(c),\,\mK(u)\leq 0\},\label{mtilde equal m}
\end{align}
where $\mI(u)$ is defined by \eqref{def of mI}. Then $m_c=\tilde{m}_c$.
\end{lemma}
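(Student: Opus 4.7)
The plan is to establish the two inequalities $\tilde m_c \leq m_c$ and $m_c \leq \tilde m_c$ separately; the first is essentially by construction, while the second rests on the scaling analysis of Lemma \ref{monotoneproperty}.

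For $\tilde m_c \leq m_c$, I would just observe that any competitor for $m_c$ lies in $S(c)$ and satisfies $\mK(u) = 0$, hence is also admissible for $\tilde m_c$. Since $\mI(u) = \mH(u) - \frac{2}{pd}\mK(u) = \mH(u)$ on the set $\{\mK=0\}$, taking the infimum of $\mI$ over the (a priori larger) feasible set $\{u\in S(c):\mK(u)\leq 0\}$ yields $\tilde m_c \leq m_c$.

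For the reverse inequality, I would take an arbitrary $u \in S(c)$ with $\mK(u) \leq 0$ and build a competitor for $m_c$ via scaling. By Lemma \ref{monotoneproperty}(ii)--(iii) there is a unique $t^* \in (0,1]$ (with $t^* = 1$ iff $\mK(u) = 0$) such that $u^{t^*} \in V(c)$; the scaling $u \mapsto u^t$ defined in \eqref{def of scaling op} preserves the $L^2$-norm, so $u^{t^*} \in S(c)$ as well. Thus $u^{t^*}$ is admissible for $m_c$, and
\[ m_c \leq \mH(u^{t^*}) = \mI(u^{t^*}), \]
the last equality using $\mK(u^{t^*}) = 0$. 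Substituting the scaling identities $\|\pt_y u^t\|_2^2 = \|\pt_y u\|_2^2$, $\|\nabla_x u^t\|_2^2 = t^2 \|\nabla_x u\|_2^2$ and $\|u^t\|_{q+2}^{q+2} = t^{qd/2}\|u\|_{q+2}^{q+2}$ into \eqref{def of mI}, I obtain
\[ \mI(u^t) = \tfrac{1}{2}\|\pt_y u\|_2^2 + \bg(\tfrac{1}{2}-\tfrac{2}{pd}\bg) t^2 \|\nabla_x u\|_2^2 + \tfrac{1}{q+2}\bg(\tfrac{q}{p}-1\bg) t^{qd/2}\|u\|_{q+2}^{q+2}. \]
Because $q > p > 4/d$, every coefficient above is strictly positive and the exponents of $t$ are nonnegative, so $t \mapsto \mI(u^t)$ is monotone nondecreasing on $(0,\infty)$. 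Combined with $t^* \leq 1$ this gives $\mI(u^{t^*}) \leq \mI(u)$, and taking the infimum over admissible $u$ yields $m_c \leq \tilde m_c$.

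I do not anticipate any real obstacle: the only nontrivial ingredient is the monotonicity of $t \mapsto \mI(u^t)$, which is a direct coefficient check using the intercritical conditions $q > p > 4/d$. The lemma is in essence a short corollary of Lemma \ref{monotoneproperty} together with the algebraic identity $\mI = \mH - \frac{2}{pd}\mK$.
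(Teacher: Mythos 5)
Your proof is correct and follows essentially the same route as the paper: scale an admissible $u$ (or, in the paper, a minimizing sequence) down to the constraint $\{\mK=0\}$ via Lemma~\ref{monotoneproperty}, use $\mI=\mH$ there, and observe that $\mI$ cannot increase under scaling down. You in fact make explicit the small step $\mI(u^{t^*})\leq\mI(u)$, which the paper states without verifying the monotonicity of $t\mapsto\mI(u^t)$; your coefficient check of that monotonicity is accurate.
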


\begin{proof}
Let $(u_n)_n\subset S(c)$ be a minimizing sequence for the variational problem $\tilde{m}_c$, i.e.
\begin{align}\label{le coz charac}
\mI(u_n)=\tilde{m}_c+o_n(1),\quad
\mK(u_n)\leq 0\quad\forall\,n\in\N.
\end{align}
By Lemma \ref{monotoneproperty} we know that there exists some $t_n\in(0,1]$ such that $\mK(u_n^{t_n})$ is equal to zero. Thus
\begin{align*}
m_c\leq \mH(u_n^{t_n})=\mI(u_n^{t_n})\leq \mI(u_n)=\tilde{m}_c+o_n(1).
\end{align*}
Sending $n\to\infty$ we infer that $m_c\leq \tilde{m}_c$. On the other hand,
\begin{align*}
\tilde{m}_c
\leq\inf\{\mI(u):u\in V(c)\}
=\inf\{\mH(u):u\in V(c)\}=m_c,
\end{align*}
which completes the proof.
\end{proof}

We shall still need to introduce the following preliminary concepts given in \cite{Berestycki1983}. First recall that $S(c)$ is a submanifold
of $H_{x,y}^1$ with codimension $1$. Moreover, the tangent space $T_uS(c)$ for a point $u\in S(c)$ is given by
$$ T_u S(c)=\{v\in H_{x,y}^1:\la u,v\ra_{L_{x,y}^2}=0\}.$$
Denote the tangent bundle of $S(c)$ by $TS(c)$. Next, the energy functional $\mH|_{S(c)}$ restricted to $S(c)$ is a $C^1$-functional on
$S(c)$ and for any $u\in S(c)$ and $v\in T_uS(c)$ we have
\begin{align*}
\la \mH'|_{S(c)}(u),v\ra =\la \mH'(u),v\ra .
\end{align*}
We use $\|\mH'|_{S(c)}(u)\|_*$ to denote the dual norm of $\mH'|_{S(c)}(u)$ in the cotangent space $(T_uS(c))^*$, i.e.
$$\|\mH'|_{S(c)}(u)\|_*:=\sup_{v\in T_u S(c),\,\|v\|_{H^1_{x,y}}\leq 1}|\la \mH'|_{S(c)}(u),v\ra|. $$
Let now
$$\tilde{S}(c):=\{u\in S(c):\mH'|_{S(c)}(u)\neq 0\}.$$
According to \cite[Lem. 4]{Berestycki1983} there exists a locally Lipschitz pseudo gradient vector field $Y:\tilde{S}(c)\to TS(c)$ such that
$Y(u)\in T_uS(c)$ and
\begin{align}\label{pseudo gradient vector}
\|Y(u)\|_{H_{x,y}^1}\leq 2\|\mH'|_{S(c)}(u)\|_*\quad\text{and}\quad \la \mH'|_{S(c)}(u),Y(u)\ra\geq \|\mH'|_{S(c)}(u)\|_*^2
\end{align}
for $u\in \tilde{S}(c)$.

Having all the preliminaries we are ready to prove the claimed statement.

\begin{lemma}\label{minimizer is solution}
For any $c>0$ an optimizer $u_c$ of $m_c$ is a solution of \eqref{nls2} for some $\omega\in\R$.
\end{lemma}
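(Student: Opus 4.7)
The plan is to exploit the mountain pass characterization $m_c = \gamma_c$ established in Lemma \ref{lemma existence mountain pass} and thereby show that any minimizer $u_c$ of $m_c$ must be an \emph{unconstrained} critical point of $\mH|_{S(c)}$ on the codimension-one mass manifold. Once that is in hand, the Lagrange multiplier theorem applied to the single constraint $\mM(u)=c$ produces $\omega\in\R$ with $\mH'(u_c)+\omega u_c = 0$ in $H_{x,y}^{-1}$, which is exactly the stationary equation \eqref{nls2}. This strategy bypasses the difficulty that $u_c$ is \emph{a priori} only a critical point of the doubly constrained problem, for which a naive Lagrange argument would yield two multipliers and not reduce directly to \eqref{nls2}.

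I would proceed by contradiction. Assume $\|\mH'|_{S(c)}(u_c)\|_* > 0$, so that $\|\mH'|_{S(c)}(v)\|_*\geq \delta$ on an $H_{x,y}^1$-neighborhood $U\ni u_c$ in $S(c)$ for some $\delta>0$. Choose a locally Lipschitz cutoff $\chi\colon S(c)\to[0,1]$ supported in $U$ with $\chi(u_c)=1$, and consider the flow $\eta_s$ on $S(c)$ generated by $-\chi(\cdot) Y(\cdot)$, where $Y$ is the pseudo-gradient vector field of \eqref{pseudo gradient vector}. Standard Cauchy--Lipschitz arguments then give, for sufficiently small $s>0$, both $\mH(\eta_s(v))\leq \mH(v)$ on all of $S(c)$, and the strict decrement $\mH(\eta_s(v))\leq \mH(v)- \tfrac{s\delta^2}{2}$ whenever $\eta_{[0,s]}(v)$ stays in $U$.

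Next I construct a curve $g\in\Gamma(c)$ realizing the mountain pass level along $u_c$. Take $g(t):=u_c^{t_0+(t_1-t_0)t}$ as in \eqref{curve}, with $t_0\in(0,1)$ chosen so small that $g(0)\in A_{k,\vare}$ and $t_1>1$ so large that $\mH(g(1))<0$; feasibility of this choice follows from Lemma \ref{monotoneproperty} together with the refinement of $\vare$ in Remark \ref{technical reason}. Since $\mK(u_c)=0$, Lemma \ref{monotoneproperty}(iii)--(v) guarantees that $t\mapsto \mH(g(t))$ attains its unique maximum at the interior point $t_*\in(0,1)$ where $g(t_*)=u_c$, with value exactly $m_c$. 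The deformed path $\tilde g(t):=\eta_{s\rho(t)}(g(t))$, where $\rho$ is a cutoff equal to $1$ near $t_*$ and vanishing in neighborhoods of $\{0,1\}$, coincides with $g$ at both endpoints (hence $\tilde g\in\Gamma(c)$) and satisfies $\max_{t\in[0,1]}\mH(\tilde g(t))<m_c$. This contradicts $\gamma_c=m_c$ and forces $\mH'|_{S(c)}(u_c)=0$, completing the argument.

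The main obstacle is ensuring the strict drop $\max\mH(\tilde g)<m_c$ while $\tilde g$ remains inside $\Gamma(c)$: one must check that $\tilde g(0)\in A_{k,\vare}$ and $\mH(\tilde g(1))\leq 0$ persist after deformation, and that the energy gain produced near $t_*$ dominates any possible energy increase elsewhere. Both points rest on the strictness of the maximum of $\mH\circ g$ at $t_*$ from Lemma \ref{monotoneproperty}(v), the localized support of $\chi$ and $\rho$, and the quantitative estimate \eqref{pseudo gradient vector}. The technical choice of $\vare$ in Remark \ref{technical reason}, involving $c_{p,q}$, enters precisely to guarantee that the perturbed endpoints cannot escape $A_{k,\vare}$ nor cross the level $\{\mH=0\}$ in the wrong direction, closing the gap between the geometric picture and the deformation argument.
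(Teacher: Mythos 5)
Your overall strategy---contradiction via a deformation flow on $S(c)$ combined with the mountain pass characterization $m_c=\gamma_c$---is exactly the one the paper follows (adapted from \cite{Bellazzini2013}). The minor cosmetic difference is that you truncate the pseudo-gradient flow with a cutoff $\chi$ supported in a ball around $u_c$ and a temporal cutoff $\rho$, whereas the paper's deformation $\eta$ is cut off by a function $h$ built from two energy-sublevel sets $A$ and $B$; both versions yield the same conclusion once the bookkeeping is done.

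There is, however, a genuine gap in your write-up, and it concerns precisely the step you declare ``feasible'' without argument: the claim that one can choose $t_0$ small so that $g(0)=u_c^{t_0}\in A_{k,\vare}$. The scaling $u\mapsto u^{t}$ sends $\|\nabla_x u^t\|_2^2=t^2\|\nabla_x u\|_2^2\to 0$, so the first constraint $\|\nabla_x g(0)\|_2^2\le k$ is easy. But $\|\pt_y u^{t}\|_2^2=\|\pt_y u\|_2^2$ is \emph{invariant} under this scaling, so the second constraint $\|\pt_y u_c\|_2^2\le 2(m_c-\vare)$ cannot be forced by shrinking $t_0$: it is a nontrivial \emph{a priori} bound on the optimizer $u_c$ itself. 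Establishing it is the technical heart of the paper's proof: one introduces the anisotropic scaling $T_\ld u(x,y)=\ld^{2/p}u(\ld x,y)$ from \eqref{def of t ld}, shows that $\mK(T_\ld u_c)\le 0$ for $\ld\ge 1$ because $p<q$, then invokes Lemma \ref{3.7lem} (the characterization $m_c=\tilde m_c$), Lemma \ref{monotone lemma} (monotonicity of $c\mapsto m_c$), and the minimality of $u_c$ to deduce $\tfrac{d}{d\ld}\mI(T_\ld u_c)\big|_{\ld=1}\ge 0$. Unwinding this yields \eqref{identity abcda}--\eqref{identity abcd} and finally $m_c\ge c_{p,q}\|\pt_y u_c\|_2^2$, so that the threshold $\vare\le 1-\tfrac{1}{2c_{p,q}}$ from Remark \ref{technical reason} closes the estimate. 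You attribute the role of Remark \ref{technical reason} to controlling the ``perturbed endpoints,'' but since your $\rho$ vanishes near $t\in\{0,1\}$ the endpoints are not perturbed at all; the remark's $\vare$ is needed to make the \emph{unperturbed} curve $g$ admissible in $\Gamma(c)$ in the first place. Without supplying this estimate, the contradiction argument has nothing to deform.
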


\begin{proof}
We borrow an idea from the proof of \cite[Lem. 6.1]{Bellazzini2013} to show the claim. By Lagrange multiplier theorem we know that $u_c$ solves
\eqref{nls2} is equivalent to $\mH'|_{S(c)}(u)=0$. We hence assume the contrary $\|\mH'|_{S(c)}(u)\|_*\neq 0$, which implies that there exists
some $\delta>0$ and $\mu>0$ such that
\begin{align}\label{gtr mu}
v\in B_{u_c}(3\delta)\Rightarrow\|\mH'|_{S(c)}(v)\|_*\geq \mu,
\end{align}
where $B_{u_c}(\delta):=\{v\in S(c):\|u-v\|_{H_{x,y}^1}\leq \delta\}$. Let $k$ and $\vare$ be given according to Lemma \ref{lemma existence
mountain pass} and Remark \ref{technical reason}. Define
\begin{align*}
\vare_1&:=\frac{1}{4}\bg(m_c-\max\{\sup_{g\in\Gamma(c)}\mH(g(0)),\sup_{g\in\Gamma(c)}\mH(g(1))\}\bg),\\
\vare_2&:=\min\{\vare_1,m_c/4,\mu\delta/4\}.
\end{align*}
We now define the deformation mapping $\eta$ as follows: Let the sets $A,B$ and function $h:S(c)\to [0,\delta]$ be given by
\begin{align*}
A&:= S(c)\cap \mH^{-1}([m_c-2\vare_2,m_c+2\vare_2]),\\
B&:=B_{u_c}(2\delta)\cap \mH^{-1}([m_c-\vare_2,m_c+\vare_2]),\\
h(u)&:=\frac{\delta\,\mathrm{dist}(u,S(c)\setminus A)}{\mathrm{dist}(u,S(c)\setminus A)+\mathrm{dist}(u,B)}.
\end{align*}
Next, we define the pseudo gradient flow $W:S(c)\to H_{x,y}^1$ by
\begin{align*}
W(u):=
\left\{
\begin{array}{ll}
-h(u)\|Y(u)\|^{-1}_{H_{x,y}^1}Y(u),&\text{if $u\in\tilde{S}(c)$},\\
0,&\text{if $u\in S(c)\setminus \tilde{S}(c)$}.
\end{array}
\right.
\end{align*}
One easily verifies that $W$ is a locally Lipschitz function from $S(c)$ to $H_{x,y}^1$. Then by standard arguments (see for instance
\cite[Lem. 6]{Berestycki1983}) there exists a mapping $\eta:\R\times S(c)\to S(c)$ such that $\eta(1,\cdot)\in C(S(c);S(c))$ and $\eta$ solves
the differential equation
\begin{align*}
\frac{d}{dt}\eta(t,u)=W(\eta(t,u)),\quad\eta(0,u)=u
\end{align*}
for any $u\in S(c)$. We now claim that $\eta$ satisfies the following properties:
\begin{itemize}
\item[(i)]$\eta(1,v)=v$ if $v\in S(c)\setminus \mH^{-1}([m_c-2\vare_2,m_c+2\vare_2])$.
\item[(ii)]$\eta(1,\mH^{m_c+\vare_2}\cap B_{u_c}(\delta))\subset\mH^{m_c-\vare}$.
\item[(iii)]$\mH(\eta(1,v))\leq \mH(v)$ for all $v\in S(c)$.
\end{itemize}
Here, the symbol $\mH^\kappa$ denotes the set $\mH^\kappa:=\{v\in S(c):\mH(v)\leq \kappa\}$. For (i), by definition we see that $h(v)=0$, thus
$\frac{d}{dt}\eta(t,v)|_{t=0}=W(v)=0$ and $\eta(t,v)\equiv \eta(0,v)=v$. For (iii), using \eqref{pseudo gradient vector} and the non-negativity
of $h$ we obtain
\begin{align*}
\mH(\eta(1,v))&=\mH(v)+\int_0^1\frac{d}{ds}\mH(\eta(s,v))\,ds\nonumber\\
&=\mH(v)-\int_{s\in[0,1],\eta(s,v)\in
\tilde{S}(c)}\la\mH'(\eta(s,v)),h(\eta(s,v))\|Y(\eta(s,v))\|^{-1}_{H_{x,y}^1}Y(\eta(s,v))\ra\,ds\nonumber\\
&\leq \mH(v)-\frac12\int_{s\in[0,1],\eta(s,v)\in \tilde{S}(c)}h(\eta(s,v))\|\mH'|_{S(c)}(\eta(s,v))\|_*\,ds\leq\mH(v).
\end{align*}
It is left to prove (ii). We first show that for $v\in \mH^{m_c+\vare_2}\cap B_{u_c}(\delta)$ one has $\eta(t,v)\in B_{u_c}(2\delta)$ for all
$t\in[0,1]$. This follows from
\begin{align*}
\|\eta(t,v)-v\|_{H_{x,y}^1}=\|\int_{0}^t h(v)\|Y(v)\|^{-1}_{H_{x,y}^1}Y(v)\,ds\|_{H_{x,y}^1}
\leq th(v)\leq \delta.
\end{align*}
By \eqref{gtr mu} this implies particularly that $\|\mH'|_{S(c)}(\eta(t,v))\|_*\geq \mu$. Consequently, using \eqref{pseudo gradient vector},
$0\leq h\leq \delta$ and $\vare_2\leq \mu\delta/4$ we obtain
\begin{align*}
\mH(\eta(1,v))
&\leq \mH(v)-\frac12\int_{s\in[0,1],\eta(s,v)\in \tilde{S}(c)}h(\eta(s,v))\|\mH'|_{S(c)}(\eta(s,v))\|_*\,ds\nonumber\\
&\leq m_c+\vare_2-\frac{\mu\delta}{2}\leq m_c-\vare_2.
\end{align*}
Next, we recall the function $g$ defined by \eqref{curve} by setting $u=u_c$ therein. We claim that there exist $t_0\ll 1$ and $t_1\gg 1$ such
that $g\in\Gamma(c)$. Indeed, from the proof of Lemma \ref{lemma existence mountain pass} it suffices to show $\|\pt_y u_c\|_2^2\leq
2(m_c-\vare)$. Define the scaling operator $T_\ld$ by
\begin{align}\label{def of t ld}
T_\ld u(x,y):=\ld^{\frac{2}{p}}u(\ld x,y).
\end{align}
Then
\begin{align*}
\|T_\ld(\nabla_x u)\|_2^2&=\ld^{2+\frac4p-d}\|\nabla_x u\|_2^2,\\
\|T_\ld u\|_{p+2}^{p+2}&=\ld^{2+\frac4p-d}\|u\|_{p+2}^{p+2},\quad
\|T_\ld u\|_{q+2}^{q+2}=\ld^{\frac{2q}{p}+\frac4q-d}\|u\|_{q+2}^{q+2},\\
\|T_\ld (\pt_y u)\|_2^2&=\ld^{\frac4p-d}\|\pt_y u\|_2^2,\quad
\|T_\ld u\|_2^2=\ld^{\frac4p-d}\|u\|_2^2,\\
\mK(T_\ld u)&=\ld^{2+\frac4p-d}\mK(u)-\ld^{2+\frac4p-d}\|u\|_{q+2}^{q+2}(\ld^{2(q/p-1)}-1).
\end{align*}
It thus follows that if $Q(u)=0$ and $\ld\geq 1$, then $Q(T_\ld u)\leq 0$, where we also invoked the condition $p<q$. Combining Lemma
\ref{monotone lemma}, Lemma \ref{3.7lem} and the fact that $u_c$ is an optimizer of $m_c$ we infer that $\frac{d}{d\ld}I(T_\ld
u_c)|_{\ld=1}\geq 0$, or equivalently
\begin{equation}\label{identity abcda}
\begin{aligned}
\|\pt_y u_c\|_{2}^2&\leq 2\bg(d-\frac{4}{p}\bg)^{-1}\bg(\bg(2+\frac4p-d\bg)\bg(\frac12-\frac{2}{pd}\bg)\|\nabla_x u_c\|_2^2\\
&\quad\quad+\bg(\frac{2q}{p}+\frac4p-d\bg)(q+2)^{-1}\bg(\frac{q}{p}-1\bg)\|u_c\|_{q+2}^{q+2}\bg).
\end{aligned}
\end{equation}
Using $p<q$, \eqref{identity abcda} also implies
\begin{align}\label{identity abcd}
\|\pt_y u_c\|_{2}^2&\leq 2\bg(d-\frac{4}{p}\bg)^{-1}\bg(\frac{2q}{p}+\frac4p-d\bg)\bg(\bg(\frac12-\frac{2}{pd}\bg)\|\nabla_x u_c\|_2^2
+(q+2)^{-1}\bg(\frac{q}{p}-1\bg)\|u_c\|_{q+2}^{q+2}\bg).
\end{align}
Hence
\begin{align*}
m_c&=\mH(u_c)=\mI(u_c)\nonumber\\
&=\frac{1}{2}\|\pt_y u_c\|_{2}^2+\bg(\frac12-\frac{2}{pd}\bg)\|\nabla_x u_c\|_2^2
+(q+2)^{-1}\bg(\frac{q}{p}-1\bg)\|u_c\|_{q+2}^{q+2}\nonumber\\
&\geq c_{p,q}\|\pt_y u_c\|_2^2,
\end{align*}
where the number $c_{p,q}$ is given by Remark \ref{technical reason}. Using the condition $\vare\leq 1-\frac{1}{2c_{p,q}}$ from Remark
\ref{technical reason} we obtain
\begin{align*}
\|\pt_y u_c\|_2^2\leq 2m_c\bg(1-\bg(1-\frac{1}{2c_{p,q}}\bg)\bg)\leq 2m_c(1-\vare),
\end{align*}
as desired. By (i) and Lemma \ref{lemma existence mountain pass} we know that $\eta(1,g(t))\in \Gamma(c)$. We shall finally prove
\begin{align*}
\max_{t\in[0,1]}\mH(\eta(1,g(t)))<m_c,
\end{align*}
which contradicts the characterization $m_c=\gamma_c$ deduced from Lemma \ref{lemma existence mountain pass} and closes the desired proof.
Notice by definition of $g$ and Lemma \ref{monotoneproperty} we have $\mH(g(t))\leq \mH(u_c)=m_c$ for all $t\in[0,1]$. Thus only the following
scenarios can happen:
\begin{itemize}
\item[(a)] $g(t)\in S(c)\setminus B_{u_c}(\delta)$. By (iii) and Lemma \ref{monotoneproperty} (v) we have
\begin{align*}
\mH(\eta(1,g(t)))\leq \mH(g(t))<\mH(u_c)=m_c.
\end{align*}

\item[(b)]$g(t)\in\mH^{m_c-\vare_2} $. By (iii) we have
\begin{align*}
\mH(\eta(1,g(t)))\leq \mH(g(t))\leq m_c-\vare_2<m_c.
\end{align*}

\item[(c)]$g(t)\in\mH^{-1}([m_c-\vare_2,m_c+\vare_2])\cap B_{u_c}(\delta) $. By (ii) we have
\begin{align*}
\mH(\eta(1,g(t)))\leq  m_c-\vare_2<m_c.
\end{align*}
\end{itemize}
This completes the desired proof.
\end{proof}

\subsection{Proof of Theorem \ref{thm existence of ground state} (i)}
Having all the preliminaries we are in a position to prove Theorem \ref{thm existence of ground state} (i).

\begin{proof}[Proof of Theorem \ref{thm existence of ground state} (i)]
We split our proof into three steps.
\subsubsection*{Step 1: Existence of a non-negative optimizer of $m_c$}
By Lemma \ref{3.7lem} we consider equivalently the variational problem $\tilde m_c$. Let $(u_n)_n\subset S(c)$ be a minimizing sequence of $\tilde{m}_c$ satisfying \eqref{le coz charac}. By diamagnetic inequality we know that
the variational problem $\tilde{m}_{c}$ is stable under the mapping $u\mapsto |u|$, thus we may w.l.o.g. assume that $u_n\geq 0$. By Corollary
\ref{cor lower bound} and Lemma \ref{3.7lem} we know that $\tilde{m}_c<\infty$, hence
\begin{align*}
\infty>\tilde{m}_c\gtrsim \mI(u_n)=\bg(\frac{1}{2}-\frac{2}{pd}\bg)\|\nabla_x u_n\|_2^2+\frac{1}{2}\|\pt_y u_n\|_{2}^2
+\frac{1}{q+2}\bg(\frac{q}{p}-1\bg)\|u_n\|^{q+2}_{q+2}.
\end{align*}
Combining $Q(u_n)\leq 0$ and $(u_n)\subset S(c)$ we conclude that $(u_n)_n$ is a bounded sequence in $H_{x,y}^1$. Now using \eqref{key gn
inq2} and Lemma \ref{lemma non vanishing limit} we may find some $H_{x,y}^1\setminus\{0\}\ni u\geq 0$ such that $u_n\rightharpoonup u$ weakly
in $H_{x,y}^1$. By weakly lower semicontinuity of norms we deduce
\begin{align}
\mM(u)=:c_1\in(0,c],\quad\mI(u)\leq \tilde{m}_c.\label{xia jie}
\end{align}
We next show $\mK(u)\leq 0$. Assume the contrary $\mK(u)>0$. By Brezis-Lieb lemma, $\mK(u_n)\leq 0$ and the fact that $L_{x,y}^2$ is a Hilbert
space we infer that
\begin{align*}
\mM(u_n-u)&=c-c_1+o_n(1),\\
\mK(u_n-u)&\leq -\mK(u)+o_n(1).
\end{align*}
Therefore, for all sufficiently large $n$ we know that $\mM(u_n-u)\in(0,c)$ and $\mK(u_n-u)<0$. By Lemma \ref{monotoneproperty} we know that
there exists some $t_n\in(0,1)$ such that $\mK((u_n-u)^{t_n})=0$. Consequently, Lemma \ref{monotone lemma}, Brezis-Lieb lemma and Lemma \ref{3.7lem} yield
\begin{align*}
\tilde{m}_c\leq \mI((u_n-u)^{t_n})<\mI(u_n-u)=\mI(u_n)-\mI(u)+o_n(1)=\tilde{m}_c-\mI(u)+o_n(1).
\end{align*}
Sending $n\to\infty$ and using the non-negativity of $\mI(u)$ we obtain $\mI(u)=0$. This in turn implies $u=0$, which is a contradiction and
thus $\mK(u)\leq 0$. If $\mK(u)<0$, then again by Lemma \ref{monotoneproperty} we find some $s\in(0,1)$ such that $\mK(u^s)=0$. But then using
Lemma \ref{monotone lemma}, Lemma \ref{3.7lem} and the fact $c_1\leq c$
\begin{align*}
\tilde{m}_{c_1}\leq \mI(u^s)<\mI(u)\leq \tilde{m}_c\leq \tilde{m}_{c_1},
\end{align*}
a contradiction. We conclude therefore $\mK(u)=0$

Thus $u$ is a minimizer of $m_{c_1}$. From Lemma \ref{minimizer is solution} we know that $u$ is a solution of \eqref{nls2} and it remains to
show that the corresponding $\omega$ in \eqref{nls2} is positive and $\mM(u)=c$.

\subsubsection*{Step 2: Positivity of $\omega$}
First we prove that $\omega$ is non-negative. Testing \eqref{nls2} with $u$ and followed by eliminating $\|\nabla_x u\|_2^2$ using $\mK(u)=0$
we obtain
\begin{equation}\label{corrected1}
\begin{aligned}
\|\pt_y u\|_2^2+\omega\mM(u)&=
2\bg(d-\frac{4}{p}\bg)^{-1}\bg(\bg(2+\frac4p-d\bg)\bg(\frac12-\frac{2}{pd}\bg)\|\nabla_x u_c\|_2^2\\
&\quad+\bg(\frac{2q}{p}+\frac4p-d\bg)(q+2)^{-1}\bg(\frac{q}{p}-1\bg)\|u_c\|_{q+2}^{q+2}\bg).
\end{aligned}
\end{equation}
Combining \eqref{identity abcda} we infer that $\omega\mM(u)\geq 0$. Since $u\neq 0$ we conclude $\omega\geq 0$. We next show that
$\omega=0$ leads to a contradiction, which completes the proof of Step 2. Assume therefore that $u$ satisfies the equation
\begin{align}\label{liouville}
-\Delta_{x,y}u=u^{p+1}+u^{q+1}.
\end{align}
First consider the case $d\geq 2$. By the Brezis-Kato estimate \cite{BrezisKato} (see also \cite[Lem. B.3]{Struwe1996}) and the local
$L^p$-elliptic regularity (see for instance \cite[Lem. B.2]{Struwe1996}) we know that $u\in W^{2,p}_{\rm loc}(\R^{d+1})$ for all
$p\in[1,\infty)$. Hence by Sobolev embedding we also know that $u$ and $\nabla u$ are of class $L^\infty_{\rm loc}(\R^{d+1})$. Taking $\pt_{j}$
to \eqref{liouville} with $j\in\{1,\dots,d+1\}$ we obtain
\begin{align*}
-\Delta_{x,y}\pt_j u=(p+1)u^{p}\pt_j u+(q+1)u^{q}\pt_j u\in L^\infty_{\rm loc}(\R^{d+1}).
\end{align*}
Hence by applying the local $L^p$-elliptic regularity again we deduce $u\in W^{3,p}_{\rm loc}(\R^{d+1})$ for all $p\in[1,\infty)$.
Consequently, by Sobolev embedding we infer that $u\in C^2(\R^{d+1})$. But then by \cite[Thm. 1.3]{Liouville2} we know that any nonnegative
$C^2$-solution of \eqref{liouville} must be zero, a contradiction.

Next we consider the case $d=1$. For $n\in\N$ let $\phi_n\in C_c^\infty(\R;[0,1])$ be a radially symmetric decreasing function such that
$\phi_n(t)\equiv 1$ on $|t|\leq n$, $\phi_n(t)\equiv 0$ for $|t|\geq n+1$ and $\sup_{n\in\N}\|\phi_n\|_{L_t^\infty}\lesssim 1$. Define also
$$ D_n:=\{x\in\R:|x|\in(n,n+1)\}.$$
Since $u\neq 0$ is non-negative, by monotone convergence theorem we know that
\begin{align*}
\int_{\R\times\T}(u^{p+1}+u^{q+1})\phi_n\,dxdy>0
\end{align*}
for all $n\gg 1$. On the other hand, using the fact that $\mathrm{supp}\,\nabla_x\phi_n\subset D_n$ and H\"older we see that
\begin{equation}\label{contra1}
\begin{aligned}
\int_{\R\times \T}\nabla_x u\cdot\nabla_x \phi_n\,dxdy
&\leq \|\nabla_x u\|_{L^{2}(D_n\times\T)}\|\nabla_x \phi_n\|_{L^2(D_n\times\T)}\\
&\lesssim \|\nabla_x u\|_{L^{2}(D_n\times\T)}((n+1)-n)^{\frac12}\lesssim\|\nabla_x u\|_{L^{2}(D_n\times\T)}.
\end{aligned}
\end{equation}
But then
$\infty>\|\nabla_x u\|_{2}^{2}\geq \sum_{n\geq 1}\|\nabla_x u\|^{2}_{L^{2}(D_n\times\T)}$
yields $\|\nabla_x u\|_{L^{2}(D_n\times\T)}=o_n(1)$. By the fact that $\phi_n$ is independent of $y$ we also know $\int_{\R\times\T}(-\pt_y^2
u)\phi_n\,dxdy=0$.
Summing up, by testing \eqref{nls2} with $\phi_n$ and rearranging terms we obtain
\begin{align*}
0=\int_{\R\times\T}(u^{p+1}+u^{q+1})\phi_n\,dxdy+o_n(1)>0
\end{align*}
for $n$ sufficiently large, a contradiction. This completes the proof of Step 2.

\subsubsection*{Step 3: $\mM(u)=c$ and conclusion}
Finally, we prove $\mM(u)=c$. Assume therefore $c_1<c$. By Lemma \ref{monotone lemma} and \eqref{xia jie} we know that $m_{c_1}$ is a local
minimizer of the mapping $c\mapsto m_c$, which in turn implies that the inequality in \eqref{identity abcda} is in fact an equality. Now using
\eqref{identity abcda} (as an equality) and \eqref{corrected1} we infer that $\omega\mM(u)=0$, which is a contradiction since $\omega>0$ and
$u\neq 0$. We thus conclude $\mM(u)=c$. That $u$ is positive follows immediately from the strong maximum principle. This completes the desired
proof.
\end{proof}

\subsection{Proof of Theorem \ref{thm existence of ground state} (ii)}\label{sub nehari}
In this subsection we give the proof of Theorem \ref{thm existence of ground state} (ii). Again, due to the waveguide setting we are unable to appeal to the Pohozaev's identity to infer that an optimizer of $\gamma_\omega$ is also solving \eqref{nls2}. As in the case of Theorem \ref{thm existence of ground state} (i), we shall use the mountain pass geometry of $\mS_\omega(u)$ on $H_{x,y}^1$ to solve this problem.

\begin{definition}[Mountain pass geometry of $\mS_\omega(u)$ on $H_{x,y}^1$]\label{def 5.1}
We say that $\mS_\omega(u)$ has a mountain pass geometry on $H_{x,y}^1$ at the level $\zeta_\omega$ if there exists some $k>0$ and
$\vare\in2\gamma_\omega$ such that
\begin{align}
\zeta_\omega:=\inf_{g\in\Lambda(\omega)}\max_{t\in[0,1]}\mS_\omega(g(t))>\max\{\sup_{g\in\Lambda(\omega)}\mS_\omega(g(0))
,\sup_{g\in\Lambda(\omega)}\mS_\omega(g(1))\},
\end{align}
where
\begin{align*}
\Lambda(\omega):=\{g\in C([0,1];H_{x,y}^1):g(0)\in B_{k,\vare},\mS_\omega(g(1))\leq -1\}
\end{align*}
and
\begin{align*}
B_{k,\vare}:=\{u\in H_{x,y}^1:k/2\leq \|\nabla_x u\|_{2}^2\leq k,\,\|\pt_y u\|_2^2+\omega\mM(u)\leq 2(\gamma_\omega-\vare)\}.
\end{align*}
\end{definition}

\begin{lemma}\label{lem 4.2}
The following statements hold:
\begin{itemize}
\item[(i)] There exists some $0<k_0\ll1$ such that for any $\vare\leq \gamma_\omega/2$ and $0<k\leq k_0$ we have $\mK(u)>0$ for all $u\in
    B_{k,\vare}$.
\item[(ii)] If $\mS_\omega(u)\leq-1$, then $\mK(u)<0$.
\end{itemize}
\end{lemma}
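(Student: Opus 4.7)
The two claims are of rather different nature. Part (i) is a small‑data type bound that follows from the Gagliardo–Nirenberg inequality once we exploit the a priori control on $\|\pt_y u\|_2$ and $\mM(u)$ encoded in the definition of $B_{k,\vare}$. Part (ii) is purely algebraic: it should follow by forming a well‑chosen linear combination of $\mS_\omega(u)$ and $\mK(u)$ in which every term carries a good sign.

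For (i), I would start from the definition of $B_{k,\vare}$: the bound $\|\pt_y u\|_2^2+\omega\mM(u)\leq 2\gamma_\omega$ together with $\omega>0$ gives
\[
\|\pt_y u\|_2^2 + \|u\|_2^2 \leq C(\omega,\gamma_\omega),
\]
with a constant depending only on $\omega$ and $\gamma_\omega$. Applying Lemma \ref{lemma gn additive} with $\alpha\in\{p,q\}$ then yields
\[
\|u\|_{\alpha+2}^{\alpha+2} \lesssim \|\nabla_x u\|_2^{\alpha d/2},
\]
with an implicit constant depending on $\omega$ and $\gamma_\omega$ but not on $k$. Since $\mu=1$, the $(p+2)$-term in $\mK(u)$ is nonnegative, so
\[
\mK(u) \geq \|\nabla_x u\|_2^2 - \frac{qd}{2(q+2)}\|u\|_{q+2}^{q+2}
\geq \|\nabla_x u\|_2^2\bigl(1 - C \|\nabla_x u\|_2^{qd/2-2}\bigr).
\]
Because $q>4/d$, the exponent $qd/2-2$ is strictly positive, and the parenthesis is positive whenever $\|\nabla_x u\|_2^2\leq k\leq k_0$ for a sufficiently small $k_0=k_0(\omega,\gamma_\omega,p,q,d)$. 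This $k_0$ clearly does not depend on the choice of $\vare\in(0,\gamma_\omega/2]$.

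For (ii), I would look for a combination $\mS_\omega(u) - \theta\, \mK(u)$ in which all terms have the same sign; the natural guess is $\theta = 2/(qd)$, which cancels the unfavorable $\|u\|_{q+2}^{q+2}$-term. A direct computation gives
\[
\mS_\omega(u) - \frac{2}{qd}\mK(u)
=\frac{\omega}{2}\|u\|_2^2+\frac{1}{2}\|\pt_y u\|_2^2
+\Bigl(\frac12-\frac{2}{qd}\Bigr)\|\nabla_x u\|_2^2
+\frac{1}{p+2}\Bigl(1-\frac{p}{q}\Bigr)\|u\|_{p+2}^{p+2}.
\]
The coefficients $\tfrac12-\tfrac{2}{qd}$ and $1-\tfrac{p}{q}$ are strictly positive thanks to $q>4/d$ and $q>p$ respectively, so the right‑hand side is nonnegative. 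Hence $\mK(u)\leq \tfrac{qd}{2}\mS_\omega(u)$, and the hypothesis $\mS_\omega(u)\leq -1$ immediately yields $\mK(u)\leq -qd/2<0$.

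The only mildly delicate point is that the constants in the Gagliardo–Nirenberg estimate used in (i) must be uniform over $B_{k,\vare}$ as $\vare$ varies; this is automatic since we only use the bound $\|\pt_y u\|_2^2+\omega\mM(u)\leq 2\gamma_\omega$, which is insensitive to $\vare$. Part (ii), by contrast, is essentially free: the key observation is simply choosing the correct multiplier $2/(qd)$ so that both nonlinear powers and the kinetic terms cooperate.
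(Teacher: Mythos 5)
Your proposal is correct, and both parts follow the same strategy as the paper. Part (i) is essentially identical: drop the nonnegative $(p+2)$-term (using $\mu=1$), apply the Gagliardo--Nirenberg estimate of Lemma \ref{lemma gn additive} together with the uniform control on $\|\pt_y u\|_2$ and $\mM(u)$ built into $B_{k,\vare}$, and conclude by taking $k_0$ small. In part (ii) you choose the multiplier $2/(qd)$, which cancels the $\|u\|_{q+2}^{q+2}$ contribution and leaves the $\|u\|_{p+2}^{p+2}$ term with the favorable coefficient $\frac{1}{p+2}(1-\frac{p}{q})>0$; the paper instead uses $2/(pd)$, which cancels the $\|u\|_{p+2}^{p+2}$ contribution and leaves the $\|u\|_{q+2}^{q+2}$ term with coefficient $\frac{1}{q+2}(\frac{q}{p}-1)>0$. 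Both multipliers work because $\frac4d<p<q$ makes all remaining coefficients positive, so this is merely a cosmetic difference in the same algebraic argument.
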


\begin{proof}
Using Lemma \ref{lemma gn additive} we have for $u\in B_{k,\vare}$
\begin{align*}
Q(u)&\geq \|\nabla_x u\|_2^2-\frac{qd}{2(q+2)}\|u\|_{q+2}^{q+2}\geq \|\nabla_x u\|_2^2-C_\omega\|\nabla_x u\|_2^{\frac{qd}{2}}.
\end{align*}
Since $q>4/d$, we can find some $k_0\ll 1$ such that for any $k\in(0,k_0]$ the function $z\mapsto z^2-C_\omega z^{\frac{qd}{4}}$ is positive on
$[\frac{k}{2},k]$, and the proof of (i) is complete. For (ii), by direct computation one infers that
\[S_\omega(u)-\frac{2}{pd}Q(u)
=\bg(\frac12-\frac{2}{pd}\bg)\|\nabla_x u\|_2^2+\frac{1}{2}(\|\pt_y u\|_2^2+\omega M(u))
+\bg(\frac{q}{p}-1\bg)\frac{1}{q+2}\|u\|_{q+2}^{q+2}\geq0,\]
from which we conclude that $Q(u)\lesssim S_\omega\leq -1$ and the proof of (ii) is complete.
\end{proof}

\begin{lemma}\label{lem 4.3}
There exist $k>0$ and $\vare\in(0,\gamma_\omega)$ such that
\begin{itemize}
\item[(i)]$\gamma_\omega=\zeta_\omega$ holds.
\item[(ii)]$\mS_\omega(u)$ has a mountain pass geometry on $H_{x,y}^1$ at the level $\gamma_\omega$ in the sense of Definition \ref{def 5.1}
\end{itemize}
\end{lemma}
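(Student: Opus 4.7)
My plan is to mirror the proof of Lemma~\ref{lemma existence mountain pass} line by line, trading the mass-sphere constraint for the free functional $\mS_\omega$ on $H_{x,y}^1$. The scaling $u\mapsto u^t$ from \eqref{def of scaling op} preserves $M(u)$ and $\|\pt_y u\|_2^2$, so the identity $\frac{d}{dt}\mS_\omega(u^t)=t^{-1}Q(u^t)$ of Lemma~\ref{monotoneproperty}(i) still holds, and in particular $t\mapsto\mS_\omega(u^t)$ attains its global maximum at the unique $t^*$ with $Q(u^{t^*})=0$. The central algebraic identity I will exploit is
\[ \mS_\omega(u)-\tfrac{2}{pd}Q(u)=\tfrac12\bigl(\|\pt_y u\|_2^2+\omega M(u)\bigr)+\bigl(\tfrac12-\tfrac{2}{pd}\bigr)\|\nabla_x u\|_2^2+\tfrac{1}{q+2}\bigl(\tfrac{q}{p}-1\bigr)\|u\|_{q+2}^{q+2}, \]
all three summands being nonnegative since $p<q$ and $p,q>4/d$.

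First I would calibrate $k$ and $\vare$. Lemma~\ref{lemma gn additive} applied on $\{Q=0\}$, exactly as in Corollary~\ref{cor lower bound}, yields $\gamma_\omega>0$ together with a uniform lower bound $\beta>0$ on the last two summands above along any minimizing sequence $(u_n)_n$ of $\gamma_\omega$; the displayed identity then forces $\|\pt_y u_n\|_2^2+\omega M(u_n)\leq 2(\gamma_\omega-\beta)+o_n(1)$. I would set $\vare:=\beta/2$ and shrink $k\leq k_0$, with $k_0$ from Lemma~\ref{lem 4.2}(i), so that $\tfrac{k}{2}+C(k^{pd/4}+k^{qd/4})<\vare/2$, where $C$ is the Gagliardo--Nirenberg constant.

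For the upper bound $\zeta_\omega\leq\gamma_\omega$, given $\kappa>0$ I would pick $n$ large, set $u:=u_n$, and take $g(t):=u^{t_0+(t_1-t_0)t}$ with $t_0\in(0,1)$ chosen so that $t_0^2\|\nabla_x u\|_2^2\in[k/2,k]$, which places $g(0)$ in $B_{k,\vare}$ by scale-invariance of $M$ and $\|\pt_y\cdot\|_2^2$, and $t_1>1$ chosen so large that $\mS_\omega(u^{t_1})\leq-1$, which is possible because the $-t^{qd/2}$ term dominates as $t\to\infty$. By Lemma~\ref{monotoneproperty}(v) applied to $t\mapsto\mS_\omega(u^t)$, the maximum along $g$ is attained exactly at $t=1$ and equals $\mS_\omega(u)\leq\gamma_\omega+\kappa$. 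For the reverse inequality $\zeta_\omega\geq\gamma_\omega$, Lemma~\ref{lem 4.2} gives $Q(g(0))>0$ and $Q(g(1))<0$ for every $g\in\Lambda(\omega)$, so continuity of $Q\circ g$ produces $t_*\in(0,1)$ with $Q(g(t_*))=0$, whence $\max_{[0,1]}\mS_\omega(g(\cdot))\geq\mS_\omega(g(t_*))\geq\gamma_\omega$.

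The strict mountain-pass inequality in (ii) then follows from two separate endpoint bounds: on $B_{k,\vare}$, Lemma~\ref{lemma gn additive} combined with the definition of $B_{k,\vare}$ gives $\mS_\omega(u)\leq(\gamma_\omega-\vare)+\tfrac{k}{2}+C(k^{pd/4}+k^{qd/4})\leq\gamma_\omega-\vare/2$, while $\mS_\omega(g(1))\leq -1<\gamma_\omega$ is trivial from $\gamma_\omega>0$. The main obstacle I anticipate is the Step~1 calibration: extracting a quantitative separation $\beta$ between $\|\pt_y u_n\|_2^2+\omega M(u_n)$ and $2\gamma_\omega$ along minimizing sequences, and simultaneously ensuring that the $t_0$ placing $g(0)$ into $B_{k,\vare}$ lies below $1$ so that the unique maximum $t^*=1$ of $\mS_\omega(u^\cdot)$ actually sits inside the parameter interval $[0,1]$. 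Once these quantitative choices are made, the remainder is essentially a transcription of the argument already carried out for $m_c$.
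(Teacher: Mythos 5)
Your proposal is correct and is essentially the transcription the paper itself intends: the paper's own proof of Lemma~\ref{lem 4.3} is the single sentence that it is similar to Lemma~\ref{lemma existence mountain pass}, and you have carried out exactly that adaptation. The modifications you identify are the right ones: replacing $\|\pt_y u\|_2^2$ by $\|\pt_y u\|_2^2+\omega M(u)$, handling the new lower bound $k/2\leq\|\nabla_x u\|_2^2$ in $B_{k,\vare}$ by choosing $t_0$ with $t_0^2\|\nabla_x u_n\|_2^2\in[k/2,k]$ (possible since $\liminf\|\nabla_x u_n\|_2^2>0$ along minimizers and $k$ is small), taking $\mS_\omega(g(1))\leq -1$ as the right endpoint via Lemma~\ref{lem 4.2}(ii), and exploiting that the scaling $u\mapsto u^t$ preserves both $M$ and $\|\pt_y\cdot\|_2^2$ so that $\mS_\omega(u^t)$ and $\mH(u^t)$ differ by a constant and share the unique critical point $t^*$.
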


\begin{proof}
The proof is similar to the one of Lemma \ref{lemma existence mountain pass} and we omit the details here.
\end{proof}

Notice that from Lemma \ref{lemma existence mountain pass} we know that by choosing $k\ll 1$, every curve $g$ from $\Lambda(\omega)$ must go
through the sphere $\{\mK(u)=0\}$. Thus using the mountain pass geometry in the context of \cite[Thm. 4.1]{Ghoussoub1993} (setting $F=\{\mK(u)=0\}$
therein) we obtain the existence of a Palais-Smale sequence for the variational problem $\gamma_\omega$, stated as follows.

\begin{lemma}[Existence of a Palais-Smale sequence]\label{lem ps}
There exists a sequence $(u_n)_n\subset H_{x,y}^1$ such that
$$\mS_\omega(u)=\gamma_\omega+o_n(1),\quad \mathrm{dist}_{H_{x,y}^1}(u_n,\{Q=0\})=o_n(1)
,\quad\|\mS_\omega'(u_n)\|_{H_{x,y}^{-1}}=o_n(1).$$
\end{lemma}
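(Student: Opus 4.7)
The plan is to invoke the minimax principle with dual sets of Ghoussoub, as stated in \cite[Thm.~4.1]{Ghoussoub1993}, for the $C^1$ functional $\mS_\omega:H_{x,y}^1\to\R$, with the family of paths $\Lambda(\omega)$ from Definition~\ref{def 5.1} and the candidate ``dual'' set $F:=\{u\in H_{x,y}^1:\mK(u)=0\}$. By Lemma~\ref{lem 4.3}, the minimax value along $\Lambda(\omega)$ equals $\gamma_\omega$ and strictly dominates the action on both endpoint classes $\{g(0)\}$ and $\{g(1)\}$ of the admissible paths, which is exactly the mountain pass hypothesis required by the theorem.

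The key point to verify is that $F$ is genuinely dual to the family $\Lambda(\omega)$, meaning that every admissible curve must cross $F$. Fix $g\in\Lambda(\omega)$. Since $g(0)\in B_{k,\vare}$, Lemma~\ref{lem 4.2}(i), after possibly shrinking the constants $k,\vare$ from Lemma~\ref{lem 4.3} so as to satisfy both $k\leq k_0$ and $\vare\leq\gamma_\omega/2$ (this compatibility does not disturb the mountain pass structure), gives $\mK(g(0))>0$. On the other hand $\mS_\omega(g(1))\leq-1$, so Lemma~\ref{lem 4.2}(ii) yields $\mK(g(1))<0$. The functional $\mK$ is continuous on $H_{x,y}^1$ thanks to the Sobolev embeddings $H_{x,y}^1\hookrightarrow L^{p+2}_{x,y}\cap L^{q+2}_{x,y}$ available under $p,q<4/(d-1)$. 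Therefore $t\mapsto \mK(g(t))$ is a continuous real-valued function on $[0,1]$ that changes sign, and consequently vanishes at some $t^\star\in(0,1)$, so that $g(t^\star)\in F$.

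Once this separation property is in hand, \cite[Thm.~4.1]{Ghoussoub1993} applied to the minimax triple $(\mS_\omega,\Lambda(\omega),F)$ directly produces a sequence $(u_n)\subset H_{x,y}^1$ satisfying $\mS_\omega(u_n)\to\gamma_\omega$, $\|\mS_\omega'(u_n)\|_{H^{-1}_{x,y}}\to 0$, and $\mathrm{dist}_{H^1_{x,y}}(u_n,F)\to 0$, which is exactly the conclusion of the lemma. The one substantive step is the separation check above; beyond that, the proof is a black-box application of a well-known abstract minimax principle, and I expect no analytical obstacle provided the constants in Definitions~\ref{definiton of mp geometry}--\ref{def 5.1} are chosen in a manner compatible with the sign information on $\mK$ at the two endpoints.
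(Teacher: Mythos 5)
Your proposal takes the same route as the paper: invoke Ghoussoub's minimax principle \cite[Thm.~4.1]{Ghoussoub1993} for $\mS_\omega$ with the paths $\Lambda(\omega)$ and the dual set $F=\{\mK(u)=0\}$, and verify the separation property by checking the sign of $\mK$ at the endpoints of each admissible curve via Lemma~\ref{lem 4.2} and then using continuity. The paper states this more tersely (and even mis-references the $m_c$-version of the mountain pass lemma instead of Lemma~\ref{lem 4.2}/\ref{lem 4.3}); your write-up is actually more careful, since you explicitly flag the need to pick $k\leq k_0$ and $\vare\leq\gamma_\omega/2$ so that the endpoint sign conditions from Lemma~\ref{lem 4.2}(i) apply while the mountain pass structure of Lemma~\ref{lem 4.3} is retained. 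This is a genuine compatibility detail that the paper glosses over, and you handle it correctly.
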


Mimicking the proof of Corollary \ref{cor lower bound} we are able to show the following useful properties of the constructed Palais-Smale
sequence.

\begin{lemma}\label{lem 4.5}
Let $(u_n)_n$ be the Palais-Smale sequence constructed in Lemma \ref{lem ps}. Then $(u_n)_n$ is bounded in $H_{x,y}^1$ and satisfies
$\liminf_{n\to\infty}\|u_n\|_{q+2}>0$.
\end{lemma}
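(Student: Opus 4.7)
The plan is to extract both claims directly from the three defining properties of the Palais--Smale sequence in Lemma \ref{lem ps}, exploiting the focusing-defocusing structure $\mu=1$ present in the variational problem $\gamma_\omega$.

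For $H^1$-boundedness, the key observation is the Nehari-type identity
\[
\mS_\omega(u_n)-\frac{1}{q+2}\la \mS_\omega'(u_n),u_n\ra
=\frac{q\omega}{2(q+2)}\|u_n\|_2^2+\frac{q}{2(q+2)}\|\nabla_{x,y}u_n\|_2^2+\mu\frac{q-p}{(p+2)(q+2)}\|u_n\|_{p+2}^{p+2},
\]
whose right-hand side is a linear combination with strictly positive coefficients, since $\mu=1$, $\omega>0$ and $q>p>\frac{4}{d}$; it therefore dominates $c(\omega,p,q)\|u_n\|_{H_{x,y}^1}^2$. On the left-hand side, $\mS_\omega(u_n)=\gamma_\omega+o_n(1)$ is bounded and the duality bound $|\la\mS_\omega'(u_n),u_n\ra|\leq \|\mS_\omega'(u_n)\|_{H_{x,y}^{-1}}\|u_n\|_{H_{x,y}^1}=o_n(1)\|u_n\|_{H_{x,y}^1}$ is absorbable. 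Solving $\|u_n\|_{H_{x,y}^1}^2\lesssim 1+o_n(1)\|u_n\|_{H_{x,y}^1}$ then yields the claimed $H^1$-boundedness.

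For the lower bound $\liminf_n\|u_n\|_{q+2}>0$, the plan is a short contradiction argument. Once boundedness has been established, the pairing $\la\mS_\omega'(u_n),u_n\ra$ becomes a genuine $o_n(1)$, so that
\[
\omega\|u_n\|_2^2+\|\nabla_{x,y}u_n\|_2^2+\|u_n\|_{p+2}^{p+2}=\|u_n\|_{q+2}^{q+2}+o_n(1).
\]
If $\|u_n\|_{q+2}\to 0$ along a subsequence, the right-hand side vanishes; combined with $\omega>0$ this forces $u_n\to 0$ in $H_{x,y}^1$, hence $\mS_\omega(u_n)\to 0$, contradicting $\mS_\omega(u_n)\to\gamma_\omega>0$.

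The only delicate point, and the main obstacle I anticipate, is the positivity $\gamma_\omega>0$, which is not spelled out separately but is implicit in the mountain pass construction: by Lemma \ref{lem 4.2} any admissible path starts with $\mK>0$ and ends with $\mK<0$, hence crosses the Nehari-type set $\{\mK=0\}$; on $\{\mK=0\}$ one has $\mS_\omega=\frac{\omega}{2}\mM+\mI$ with strictly positive coefficients, and combining this with the Gagliardo--Nirenberg argument used in Corollary \ref{cor lower bound} delivers a uniform positive lower bound on the crossing values. Apart from this verification, the argument is very short, and notably does not require passing to the approximating sequence arising from $\mathrm{dist}_{H_{x,y}^1}(u_n,\{\mK=0\})=o_n(1)$, which will be reserved for the subsequent compactness analysis.
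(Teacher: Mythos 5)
Your proof is correct and fills in what the paper leaves implicit (the paper's ``proof'' of this lemma is the single sentence ``mimicking the proof of Corollary~\ref{cor lower bound}''). Your boundedness argument via the Nehari-type coercivity identity
\[
\mS_\omega(u_n)-\tfrac{1}{q+2}\la \mS_\omega'(u_n),u_n\ra
=\tfrac{q\omega}{2(q+2)}\|u_n\|_2^2+\tfrac{q}{2(q+2)}\|\nabla_{x,y}u_n\|_2^2+\tfrac{q-p}{(p+2)(q+2)}\|u_n\|_{p+2}^{p+2}
\]
is the standard way to bound mountain-pass Palais--Smale sequences, and it is in a mild sense cleaner than a literal transcription of Corollary~\ref{cor lower bound}: the corollary's proof relies on $\mK(u_n)=0$ \emph{exactly}, and for the Palais--Smale sequence the analogous conclusion $\mK(u_n)=o_n(1)$ cannot be drawn from the condition $\mathrm{dist}_{H^1_{x,y}}(u_n,\{\mK=0\})=o_n(1)$ until boundedness is already in hand. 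Your route via $\la \mS_\omega'(u_n),u_n\ra$ sidesteps this potential circularity. Both routes, once boundedness is known, then give the $\liminf$ bound either by the contradiction you describe or by running the Gagliardo--Nirenberg step of Corollary~\ref{cor lower bound} with $\mK(u_n)=o_n(1)$. You are right that the only point genuinely requiring care is $\gamma_\omega>0$, and your justification is the correct one: on $\{\mK=0\}\setminus\{0\}$ one has $\mS_\omega=\tilde{\mI}$ with strictly positive coefficients (for $\mu=1$), and the Gagliardo--Nirenberg inequality of Lemma~\ref{lemma gn additive} together with $q>4/d$ forces $\|\nabla_x u\|_2$ (hence $\|u\|_{H^1_{x,y}}$) bounded away from zero on this set, yielding a uniform positive lower bound for $\mS_\omega$. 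In short, your argument is valid, self-contained, and a reasonable reading of the paper's terse hint, with the boundedness step taking a slightly more standard route than a direct imitation of Corollary~\ref{cor lower bound} would.
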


We are now ready to give the complete proof of Theorem \ref{thm existence of ground state} (ii).

\begin{proof}[Proof of Theorem \ref{thm existence of ground state} (ii)]
Let $(u_n)_n$ be the Palais-Smale sequence given by Lemma \ref{lem ps}. Using also Lemma \ref{lem 4.5} and Lemma \ref{lemma non vanishing
limit} we know that $u_n$ converges to some $u\neq 0$ weakly in $H_{x,y}^1$. Using $\|S_\omega'(u_n)\|_{H_{x,y}^1}=o_n(1)$ from Lemma \ref{lem ps} we
know that $u$ solves \eqref{nls2}. It remains to show that $u$ is an optimizer of $\gamma_\omega$.

Define
\begin{align*}
\tilde{\gamma}_\omega:=\inf\{\tilde\mI(u):u\in H_{x,y}^1\setminus\{0\},\,\mK(u)\leq 0\},
\end{align*}
where $\tilde\mI(u)$ is given by
$\tilde\mI(u):=\mS_\omega(u)-\frac{2}{pd}\mK(u)$.
Following the same arguments in Lemma \ref{3.7lem} one easily verifies $\gamma_\omega=\tilde{\gamma}_\omega$. Using $\mathrm{dist}_{H_{x,y}^1}(u_n,\{Q=0\})=o_n(1)$ given in
Lemma \ref{lem ps} we deduce that
\[\tilde\mI(u_n)=\gamma_\omega+o_n(1),\quad \mK(u_n)=o_n(1).\]
By the weakly lower semicontinuity of norms we infer that $\tilde\mI(u)\leq \gamma_\omega$. We still need to show $\mK(u)=0$. Assume first
that $\mK(u)<0$. By Lemma \ref{monotoneproperty} there exists some $t\in(0,1)$ such that $\mK(u^t)=0$. Then
\[\gamma_\omega\leq \tilde\mI(u^{t})<\tilde\mI(u)\leq\gamma_\omega,\]
a contradiction. Suppose now $\mK(u)>0$. Then using the Brezis-Lieb inequality similarly as in the proof of Theorem \ref{thm existence of
ground state} (i) we know that $\mK(u_n-u)<0$ for all $n\gg 1$. But then
\[\gamma_\omega\leq \tilde\mI(u_n-u)=\tilde\mI(u_n)-\tilde\mI(u)+o_n(1)=\gamma_\omega-\tilde\mI(u)+o_n(1),\]
from which we conclude that $\tilde\mI(u)=0$ and consequently $u=0$, a contradiction. This completes the proof.
\end{proof}

\section{Periodic dependence of the ground states: Proof of Theorem \ref{thm threshold mass}}\label{sec 3b}
In this section, we give the proof for Theorem \ref{thm threshold mass}. The following lemma will be playing a crucial role throughout the whole section.

\begin{lemma}\label{lemma auxiliary}
Let $m_{1,\ld},m_{1}^{\ld},\wm_{1,\ld},\wm_1^\ld$ be the quantities defined through \eqref{def of auxiliary problem} and \eqref{def of auxiliary
problem 2}. Then there exist some $0<\ld_*,\ld^*<\infty$ such that
\begin{itemize}
\item For all $\ld \in (0,\ld_*)$ we have $m_{1}^\ld<2\pi \wm_{(2\pi)^{-1}}^\ld$ and any minimizer $u^\ld$ of $m_{1}^\ld$ satisfies $\pt_y u^\ld\neq 0$.

\item For all $\ld\in(\ld^*,\infty)$ we have $m_{1,\ld}=2\pi \wm_{(2\pi)^{-1},\ld}$ and any minimizer $u_\ld$ of $m_{1,\ld}$ satisfies $\pt_y u_\ld= 0$.
\end{itemize}
\end{lemma}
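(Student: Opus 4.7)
The plan starts from the observation that for any $v\in\widehat V_\ld((2\pi)^{-1})$ the lift $u(x,y):=v(x)$ lies in $V_\ld(1)$ with $\mK_\ld(u)=2\pi\wmK_\ld(v)=0$ and $\mH_\ld(u)=2\pi\wmH_\ld(v)$, so $m_{1,\ld}\le 2\pi\wm_{(2\pi)^{-1},\ld}$, and analogously $m_1^\ld\le 2\pi\wm^\ld_{(2\pi)^{-1}}$. Optimizers $u_\ld$ and $u^\ld$ exist for every $\ld\in(0,\infty)$ by repeating the scheme of Section~\ref{sec: 3a}: Lemma~\ref{lemma gn additive} and Lemma~\ref{monotoneproperty} apply verbatim with the modified prefactors $\ld$, $\ld^{p/q-1}$, $\ld^{q/p-1}$; the analogue of Corollary~\ref{cor lower bound} yields a bounded minimizing sequence with non-vanishing $L^{q+2}$-mass; and Lemma~\ref{lemma non vanishing limit} combined with the rigidity arguments of Theorem~\ref{thm existence of ground state}(i) closes the existence proof. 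The content of the present lemma is therefore to identify the regimes of equality versus strict inequality in the two displayed bounds and to read off the behavior of $\pt_y$ on the minimizers.

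\textbf{Large $\ld$.} As $\ld\to\infty$, the prefactor $\ld^{p/q-1}\to 0$ degrades $\wmH_\ld$ and $\wmK_\ld$ into the pure $q$-focusing functionals $\wmH_\infty$, $\wmK_\infty$ on $\R^d$, and a continuity argument gives $\wm_{(2\pi)^{-1},\ld}\to\wm_{(2\pi)^{-1},\infty}$. Hence the easy inequality keeps $m_{1,\ld}$ uniformly bounded, and combined with $\tfrac{\ld}{2}\|\pt_y u_\ld\|_2^2\le\mH_\ld(u_\ld)=m_{1,\ld}$ this forces $\|\pt_y u_\ld\|_2\to 0$. Translating $u_\ld$ in $x$ and invoking Lemma~\ref{lemma non vanishing limit} produces a non-trivial weak limit $u_\infty$, which by the smallness of $\pt_y u_\ld$ must be $y$-independent. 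Passing to the limit inside $\mK_\ld$ (the $p$-piece vanishes) gives $\wmK_\infty(u_\infty)\le 0$ and $\wmM(u_\infty)\le(2\pi)^{-1}$. A Brezis--Lieb decomposition together with the strict monotonicity of Lemma~\ref{lem wmc property}(iv) rules out any mass loss, so $u_\infty$ is an optimizer of $\wm_{(2\pi)^{-1},\infty}$ and $\liminf_{\ld\to\infty} m_{1,\ld}\ge 2\pi\wm_{(2\pi)^{-1},\infty}$. Combined with the upper bound this yields equality $m_{1,\ld}=2\pi\wm_{(2\pi)^{-1},\ld}$ for all $\ld\ge\ld^*$, and the same rigidity forces $\pt_y u_\ld\equiv 0$ in that regime.

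\textbf{Small $\ld$ and the main obstacle.} For $m_1^\ld$ with $\ld\to 0^+$ the prefactor $\ld^{q/p-1}\to 0$, so $\mH^\ld$ and $\mK^\ld$ degenerate on the $q$-side into the pure $p$-focusing functionals, while the $y$-penalty $\tfrac{\ld}{2}\|\pt_y\cdot\|_2^2$ also vanishes. The goal is to manufacture a $y$-dependent competitor in $V^\ld(1)$ that beats every $y$-independent one. Let $w_\ld$ minimize $\wm^\ld_{(2\pi)^{-1}}$ and let $w_0$ denote its $\ld\to 0^+$ limit, a ground state of the pure $p$-power focusing problem on $\R^d$ at mass $(2\pi)^{-1}$; set $u_0(x,y):=w_\ld(x)\in V^\ld(1)$ and perturb by
\[
u_\vare(x,y):=u_0(x,y)+\vare\,\fai(x)\cos y,
\]
restoring the constraints $\mM(\cdot)=1$ and $\mK^\ld(\cdot)=0$ by an $L^2$-renormalization followed by the $t$-scaling of Lemma~\ref{monotoneproperty}, obtaining $\tilde u_\vare\in V^\ld(1)$. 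A direct second-order expansion of $\mH^\ld(\tilde u_\vare)-\mH^\ld(u_0)$ at $\vare=0$ splits into a $y$-cost of size $\tfrac{\pi\ld\vare^2}{2}\|\fai\|_{L_x^2}^2$ and a $\ld$-independent part $\tfrac{\pi\vare^2}{2}\la L_0\fai,\fai\ra$ with $L_0\fai=-\Delta_x\fai+\mu(p+1)|w_0|^p\fai$ (plus terms vanishing with $\ld$). Because $w_0$ is a mass-supercritical focusing ground state on $\R^d$, $L_0$ has a non-trivial negative direction inside the tangent space to the pure-$p$ Pohozaev manifold intersected with $\widehat{S}((2\pi)^{-1})$; choosing $\fai$ in that direction makes $\la L_0\fai,\fai\ra<0$ and hence yields $\mH^\ld(\tilde u_\vare)<2\pi\wm^\ld_{(2\pi)^{-1}}$ for $\vare$ and $\ld$ small enough. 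This gives $m_1^\ld<2\pi\wm^\ld_{(2\pi)^{-1}}$ and forces $\pt_y u^\ld\ne 0$ on any minimizer $u^\ld$, since otherwise it would saturate the easy inequality. The main technical bottleneck is precisely verifying the survival of this negative direction after projection onto the two constraint manifolds, together with obtaining uniform-in-$\ld$ decay and regularity of $w_\ld$ to justify passing to $w_0$; should the direct Morse-index argument prove too delicate, a rescaling-based alternative can be used, mapping $m_1^\ld$ for $\ld\ll 1$ into the large-mass regime of the pure $p$-power focusing problem on $\R^d\times\T$ analyzed in \cite{Luo_inter}, where the strict inequality $m_c^{(p)}<2\pi\wm^{(p)}_{(2\pi)^{-1}c}$ is already known.
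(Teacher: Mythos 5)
Your proposal gets the trivial direction and the overall game plan right, but there are two genuine gaps, one in each regime.

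\textbf{Large $\ld$.} You correctly show $\|\pt_y u_\ld\|_2\to 0$ and that $u_\ld$ converges to a $y$-independent optimizer of $\wm_{(2\pi)^{-1},\infty}$, hence $\lim_{\ld\to\infty}m_{1,\ld}=2\pi\wm_{(2\pi)^{-1},\infty}$. But from this limit you then assert ``this yields equality $m_{1,\ld}=2\pi\wm_{(2\pi)^{-1},\ld}$ for all $\ld\ge\ld^*$, and the same rigidity forces $\pt_y u_\ld\equiv 0$.'' That jump is not justified: convergence of $m_{1,\ld}$ to the one-dimensional reduced value does not by itself rule out a minimizer with a small but nonzero $\pt_y u_\ld$ achieving a strictly smaller value than $2\pi\wm_{(2\pi)^{-1},\ld}$. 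One needs a genuine stabilization argument. The paper supplies this in two separate steps: first, a lemma (the analogue of \cite[Lem.\ 3.5]{TTVproduct2014}) proving $\pt_y u_\ld\equiv 0$ exactly for $\ld>\ld_0$, which uses the strong $H^1$ convergence of $u_\ld$ together with an eigenvalue/spectral gap argument for the limiting Schr\"odinger operator; second, once $\ld^*$ is defined as the infimum of the equality range, a monotonicity trick borrowed from \cite{GrossPitaevskiR1T1}: if $\ld>\ld_*$ and $\pt_y u_\ld\neq 0$, picking $\kappa\in(\ld_*,\ld)$ gives $2\pi\wm_{(2\pi)^{-1},\kappa}=m_{1,\kappa}\le\mI_\kappa(u_\ld)<\mI_\ld(u_\ld)=m_{1,\ld}=2\pi\wm_{(2\pi)^{-1},\ld}$, contradicting the non-increase of $\ld\mapsto\wm_{(2\pi)^{-1},\ld}$. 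Your sketch does not touch either step.

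\textbf{Small $\ld$.} Your second-variation/Morse-index approach is not what the paper does, and you acknowledge yourself that its key claim (a surviving negative direction of $L_0$ after projecting onto the mass and Pohozaev constraint manifolds) is not established; on the waveguide one must also track the $\vare^2$-normalization back onto $V^\ld(1)$ carefully, since the renormalized competitor need not remain close to $u_0$ in all the relevant norms. The paper's route is simpler and fully rigorous: it first proves $\lim_{\ld\to 0}\wm_c^\ld=\wm_c^0$ for each fixed $c$ via a compactness argument, then constructs an explicit one-dimensional tent profile $\rho\in H^1_y$ with $\|\rho\|_{L_y^2}^2=\|\rho\|_{L_y^{p+2}}^{p+2}<\min\{2\pi,\|\rho\|_{L_y^{q+2}}^{q+2}\}$, and tests with $\psi^\ld(x,y)=\rho(y)P^\ld(x)$, where $P^\ld$ optimizes $\wm^\ld_{\|\rho\|_{L_y^2}^{-2}}$ at the strictly larger mass $\|\rho\|_{L_y^2}^{-2}>(2\pi)^{-1}$. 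Strict monotonicity of $c\mapsto\wm_c^0$ (Lemma \ref{lem wmc property}(iv)) then produces a fixed gap $\delta>0$ that survives as $\ld\to 0$, yielding $m_1^\ld<2\pi\wm_{(2\pi)^{-1}}^\ld$ without any spectral analysis. Your alternative of rescaling into the large-mass regime of \cite{Luo_inter} also does not transfer directly, since for $\ld>0$ the functional $\mH^\ld$ retains a nonzero $q$-term and is not the pure $p$-power problem; the continuity in $\ld$ that you would need to invoke is exactly what the paper proves first. I would recommend replacing the Morse-index sketch with the explicit test-function construction.
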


Moreover, we shall mostly concentrate on the proof of Theorem \ref{thm threshold mass} in the variational context where the mass is normalized. The proof for the variational problem, where the frequency $\omega$ is fixed, is quite similar to the former and thus will be sketched out at the end of the proof of Theorem \ref{thm threshold mass} by taking suitable modification into account.

\subsection{Proof of Lemma \ref{lemma auxiliary}}
We firstly characterize $m_{1,\ld}$ in the limit $\ld\to\infty$.

\begin{lemma}\label{auxiliary lemma 1}
We have
\begin{align}\label{limit ld to infty energy sec4}
\lim_{\ld\to\infty}m_{1,\ld}=2\pi\wm_{(2\pi)^{-1},\infty}.
\end{align}
Additionally, let $u_\ld\in V_\ld(1)$ be a positive optimizer of $m_{1,\ld}$ which also satisfies
\begin{align}
-\Delta_x u_\ld-\ld \pt_y^2 u_\ld+\beta_\ld u_\ld=\ld^{\frac{p}{q}-1}|u_\ld|^p u_\ld+|u_\ld|^q u_\ld\quad\text{on $\R^d\times
\T$}\label{vanishing 3 sec4}
\end{align}
for some $\beta_\ld>0$ (whose existence is guaranteed by Theorem \ref{thm existence of ground state}). Then
\begin{align}
\lim_{\ld\to\infty}\ld\|\pt_y u_\ld\|_2^2=0.\label{vanishing sec4}
\end{align}
\end{lemma}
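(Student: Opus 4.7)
The plan is to prove \eqref{limit ld to infty energy sec4} by matching upper and lower bounds, leveraging the key decay $\ld^{\frac{p}{q}-1}\to 0$ as $\ld\to\infty$ (since $p<q$), which makes $\mH_\ld$ and $\mK_\ld$ formally reduce to pure $q$-power functionals; \eqref{vanishing sec4} will then drop out as a by-product of the tightness analysis used in the lower bound.

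\textbf{Upper bound.} I would take $P:=P_{(2\pi)^{-1}}\in H_x^1$ provided by Lemma \ref{lem wmc property} (i) and lift it to $\tilde P(x,y):=P(x)$, noting $\mM(\tilde P)=2\pi\|P\|_{L_x^2}^2=1$ and $\pt_y\tilde P=0$. A direct adaptation of Lemma \ref{monotoneproperty} to the $\ld$-twisted functionals produces a unique dilation $t_\ld>0$ with $(\tilde P)^{t_\ld}\in V_\ld(1)$; since the $\ld^{\frac{p}{q}-1}$-weighted $L^{p+2}$-contribution to $\mK_\ld$ disappears in the limit and $\wmK_\infty(P)=0$ by Lemma \ref{lem wmc property} (iii), the defining algebraic equation for $t_\ld$ degenerates as $\ld\to\infty$ to the one uniquely solved by $t=1$, forcing $t_\ld\to 1$. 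Dominated convergence then yields
\[\limsup_{\ld\to\infty}m_{1,\ld}\leq\lim_{\ld\to\infty}\mH_\ld((\tilde P)^{t_\ld})=2\pi\wmH_\infty(P)=2\pi\wm_{(2\pi)^{-1},\infty}.\]

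\textbf{Lower bound and vanishing.} For the reverse inequality, let $u_\ld\in V_\ld(1)$ be an optimizer satisfying \eqref{vanishing 3 sec4}, whose existence is granted by transplanting the proof of Theorem \ref{thm existence of ground state} (i) to the $\ld$-twisted setting. Since $\mK_\ld(u_\ld)=0$ yields $\mH_\ld(u_\ld)=\mI_\ld(u_\ld)$ and $q>p>4/d$, the upper bound just proved supplies uniform-in-$\ld$ bounds on $\|\nabla_x u_\ld\|_2$, $\ld^{1/2}\|\pt_y u_\ld\|_2$ and $\|u_\ld\|_{q+2}$; in particular $\|\pt_y u_\ld\|_2=O(\ld^{-1/2})\to 0$. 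Repeating the Gagliardo--Nirenberg argument of Corollary \ref{cor lower bound} (which is uniform in $\ld$) gives $\liminf_{\ld\to\infty}\|u_\ld\|_{q+2}>0$, so Lemma \ref{lemma non vanishing limit} together with a translation $x_\ld\in\R^d$ extracts a non-zero weak limit $u_\ld(\cdot+x_\ld,\cdot)\rightharpoonup u^*$ in $H_{x,y}^1$; the bound $\|\pt_y u_\ld\|_2\to 0$ forces $\pt_y u^*=0$, so $u^*=u^*(x)$. Following the approach of \cite{Luo_inter,TTVproduct2014}, a Brezis--Lieb splitting of $\mK_\ld(u_\ld)=0$ in which the $\ld^{\frac{p}{q}-1}$-weighted term disappears yields $\wmK_\infty(u^*)\leq 0$ together with $\wmM(u^*)=:\hat c\in(0,(2\pi)^{-1}]$; dilating $u^*$ onto $\{\wmK_\infty=0\}$ and invoking the strict monotonicity and continuity of $c\mapsto\wm_{c,\infty}$ from Lemma \ref{lem wmc property} (iv) forces $\hat c=(2\pi)^{-1}$, $\wmK_\infty(u^*)=0$ and $\wmH_\infty(u^*)=\wm_{(2\pi)^{-1},\infty}$. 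Applying Fatou term by term to $\mI_\ld(u_\ld)$ then gives
\[\liminf_{\ld\to\infty}m_{1,\ld}=\liminf_{\ld\to\infty}\mI_\ld(u_\ld)\geq 2\pi\wmH_\infty(u^*)=2\pi\wm_{(2\pi)^{-1},\infty},\]
matching the upper bound and closing \eqref{limit ld to infty energy sec4}. Comparing terms, every non-negative contribution to $\mI_\ld(u_\ld)$ must be tight in the limit, and in particular the orphan term $\frac{\ld}{2}\|\pt_y u_\ld\|_2^2$ (which has no counterpart on the right-hand side, since $\pt_y u^*=0$) must vanish as $\ld\to\infty$, giving \eqref{vanishing sec4}.

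\textbf{Main obstacle.} The principal difficulty is ruling out mass loss in the weak limit $u_\ld\rightharpoonup u^*$, where the combined-powers structure departs from \cite{TTVproduct2014,Luo_inter}: one has to track the $L^{p+2}$- and $L^{q+2}$-terms simultaneously under Brezis--Lieb and verify that the $\ld^{\frac{p}{q}-1}$-weighted $L^{p+2}$-contribution decays fast enough to leave the clean limiting identity $\wmK_\infty(u^*)=0$. This is ultimately settled by combining the uniform $H_{x,y}^1$-bound on $(u_\ld)_\ld$ with the decay $\ld^{\frac{p}{q}-1}\to 0$ and the strict monotonicity of $c\mapsto\wm_{c,\infty}$, but it is the step that would need the most care in a complete write-up.
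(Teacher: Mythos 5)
Your overall architecture (upper bound via a $y$-independent test function, lower bound via a nontrivial weak limit of the optimizers $u_\ld$, and reading off \eqref{vanishing sec4} from the tightness of the two bounds) matches the paper, and your upper-bound computation is a valid alternative to the paper's: where you pick the optimizer $P_{(2\pi)^{-1}}$ of $\wm_{(2\pi)^{-1},\infty}$, lift it, and dilate so that $(\tilde P)^{t_\ld}\in V_\ld(1)$ with $t_\ld\to 1$, the paper observes more cheaply that $y$-independent candidates give $m_{1,\ld}\leq 2\pi\wm_{(2\pi)^{-1},\ld}$ and then bounds $\wm_{(2\pi)^{-1},\ld}\leq\wm_{(2\pi)^{-1},\infty}$ via the relaxed characterization $\wm_{c,\ld}=\inf\{\wmI:\wmK_\ld\leq 0\}$ together with $\wmK_\ld(v)\leq\wmK_\infty(v)$. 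Both routes work; the paper's is shorter and dispenses with the dilation parameter.

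The genuine gap is in your lower bound, at the sentence ``a Brezis--Lieb splitting of $\mK_\ld(u_\ld)=0$ \dots yields $\wmK_\infty(u^*)\leq 0$.'' Brezis--Lieb gives $0=\mK_\ld(u^*)+\mK_\ld(u_\ld-u^*)+o(1)$, which by itself says nothing about the sign of $\mK_\ld(u^*)$ unless you can show $\mK_\ld(u_\ld-u^*)\geq 0$ in the limit; but $u_\ld-u^*$ is a generic weakly-null remainder whose $L^{q+2}$-norm you have not controlled (that is precisely the possible mass-loss scenario). To turn this into a proof you would need the full contradiction-and-dilation argument from the proof of Theorem \ref{thm existence of ground state} (i), which in turn uses the monotonicity of $c\mapsto m_c$; here you would need the analogous monotonicity for the $\ld$-parameterized problems $c\mapsto m_{c,\ld}$, uniformly enough in $\ld$ to pass to the limit. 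That is not available off the shelf and is a real piece of work. The paper sidesteps this entirely: it uses the Euler--Lagrange equation \eqref{vanishing 3 sec4}, shows the Lagrange multipliers $\beta_\ld$ stay bounded, tests against $y$-independent $\phi\in C_c^\infty(\R^d)$ and passes $\ld\to\infty$ (the term $\ld\pt_y^2 u_\ld$ annihilates $y$-independent test functions, and $\ld^{\frac pq-1}\to 0$ kills the $p$-nonlinearity) to conclude that $u$ solves $-\Delta_x u+\beta u=|u|^q u$ on $\R^d$; then Lemma \ref{lem wmc property} (iii) (Pohozaev) gives $\wmK_\infty(u)=0$ \emph{for free}, with no Brezis--Lieb and no auxiliary monotonicity. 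I would strongly recommend adopting that device, as your route requires rebuilding a nontrivial piece of the variational machinery that the paper never establishes for the $\ld$-family.

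One smaller point: your derivation of the preliminary decay $\|\pt_y u_\ld\|_2=O(\ld^{-1/2})$ directly from $m_{1,\ld}\geq\frac{\ld}{2}\|\pt_y u_\ld\|_2^2$ and the uniform upper bound is correct, and is actually slightly cleaner than the paper's phrasing (the paper presents the same inequality as a proof by contradiction); but note that the statement \eqref{vanishing sec4} you ultimately need is the stronger $\ld\|\pt_y u_\ld\|_2^2\to 0$, which (as you do note) only comes out after the matching lower bound squeezes that term to zero.
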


\begin{proof}
It suffices to restrict ourselves to the case $\ld>1$. By assuming that a candidate in $V_\ld(1)$ is independent of $y$ we already conclude
\begin{align}
m_{1,\ld}\leq 2\pi \wm_{(2\pi)^{-1},\ld}\leq 2\pi\wm_{(2\pi)^{-1},\infty}<\infty.\label{upper bound sec4}
\end{align}
To see the second inequality in \eqref{upper bound sec4}, we may simply take $(v_n)_n\subset \wmK_{\infty}((2\pi)^{-1})$ that approaches
$\wm_{(2\pi)^{-1},\infty}$. This particularly yields $\wmK_\ld(v_n)<0$ for all $n\in\N$. By a similar argument as the one given in the proof of
Lemma \ref{3.7lem} it follows
\[\wm_{(2\pi)^{-1},\ld}\leq \wmI(v_n)=\wm_{(2\pi)^{-1},\infty}+o_n(1)\]
The claim follows by sending $n\to\infty$.

Next we prove
\begin{align}
\lim_{\ld\to\infty}\|\pt_y u_\ld\|_2^2=0.\label{vanishing1 sec4}
\end{align}
Suppose that \eqref{vanishing1 sec4} does not hold. Then we must have
\begin{align*}
\lim_{\ld\to\infty}\ld\|\pt_y u_\ld\|_2^2=\infty.
\end{align*}
Since $\mK_\ld(u_\ld)=0$ and $q>p>4/d$,
\begin{equation}\label{contradiction sec4}
\begin{aligned}
m_{1,\ld}&=\mH_\ld(u_\ld)-\frac{2}{p d}\mK_{\ld}(u_\ld)=\mI_\ld(u_\ld)\\
&=\frac{\ld}{2}\|\pt_y u_\ld\|_{2}^2+\bg(\frac{1}{2}-\frac{2}{pd}\bg)\|\nabla_x u_\ld\|_{2}^2
+\frac{1}{q+2}\bg(\frac{q}{p}-1\bg)\|u_\ld\|_{q+2}^{q+2}\\
&\geq \frac{\ld}{2}\|\pt_y u_\ld\|_2^2\to\infty
\end{aligned}
\end{equation}
as $\ld\to\infty$, which contradicts \eqref{upper bound sec4} and in turn proves \eqref{vanishing1 sec4}. Using \eqref{upper bound sec4} and
\eqref{contradiction sec4} we infer that
\begin{align}\label{upper bound 2 sec4}
\|\nabla_x u_\ld\|_2^2\leq C  m_{1,\ld}\leq  2\pi C\wm_{(2\pi)^{-1},\infty}<\infty,
\end{align}
where $C$ is some positive constant independent of $\ld$. Therefore $(u_\ld)_\ld$ is a bounded sequence in $H_{x,y}^1$, whose weak limit is denoted by $u$. Arguing similarly as in the proof of Corollary \ref{cor lower bound} and using the fact that $\ld^{\frac{p}{q}-1}\to 0$ as $\ld\to\infty$ we infer that
\begin{align*}
\liminf_{\ld\to\infty}\|u_\ld\|_{q+2}^{q+2}=
\liminf_{\ld\to\infty}(\ld^{\frac{p}{q}-1}\|u_\ld\|_{p+2}^{p+2}+\|u_\ld\|_{q+2}^{q+2})\sim\liminf_{\ld\to\infty}\|\nabla_x u_\ld\|_2^2>0.
\end{align*}
Hence by Lemma \ref{lemma non vanishing limit} we may also assume that $u\neq 0$. Using
\eqref{vanishing1 sec4} we know that $u$ is independent of $y$ and thus $u\in H_x^1$. Moreover, applying the weakly lower semicontinuity of norms we
know that $\wmM(u)\in(0,(2\pi)^{-1}]$. On the other hand, using $\mK(u_\ld)=0$, $\mM(u_\ld)=1$,
$\ld^{\frac{p}{q}-1}<1$ for $\ld>1$ and H\"older we obtain from \eqref{vanishing 3 sec4} that
\begin{align*}
\beta_\ld\lesssim 1+\|u_\ld\|_{q+2}^{q+2}.
\end{align*}
Thus $(\beta_\ld)_\ld$ is a bounded sequence in $(0,\infty)$, whose limit is denoted by $\beta$. We now test \eqref{vanishing 3 sec4} with
$\phi\in C_c^\infty(\R^d)$ and integrate both sides over $\R^d\times\T$. Notice particularly that the term $\int_{\R^d\times\T}  \pt_y^2 u_\ld
\phi\,dxdy=0$ for any $\ld>0$ since $\phi$ is independent of $y$. Using the weak convergence of $u_\ld$ to $u$ in $H_{x,y}^1$ and
$\lim_{\ld\to\infty}\ld^{\frac{p}{q}-1}=0$, by sending $\ld\to\infty$ we obtain
\begin{align}
-\Delta_x u+\beta u=|u|^q u\quad\text{in $\R^d$}.\label{vanishing 4 sec4}
\end{align}
In particular, by Lemma \ref{lem wmc property} we know that $\wmK_{\infty}(u)=0$. Combining the weakly lower semicontinuity of
norms and \eqref{upper bound sec4} we deduce
$$2\pi\wmH_{\infty}(u)=2\pi\wmI(u)\leq\liminf_{\ld\to\infty}\mI_\ld(u_\ld)= \liminf_{\ld\to\infty}\mH_\ld(u_\ld)
=\liminf_{\ld\to\infty}m_{1,\ld}\leq 2\pi
\wm_{(2\pi)^{-1},\infty}.$$
However, by Lemma \ref{lem wmc property} the mapping $c\mapsto \wm_{c,\infty}$ is strictly monotone decreasing on $(0,\infty)$, from which we
conclude that $\wmM(u)=(2\pi)^{-1}$ and $u$ is an optimizer of $\wm_{(2\pi)^{-1},\infty}$. Finally, using the weakly lower semicontinuity of
norms we conclude
\begin{equation}\label{vanishing 5 sec4}
\begin{aligned}
m_{1,\ld}&=\mI_\ld
=\frac{\ld}{2}\|\pt_y u_\ld\|_{2}^2+\bg(\frac{1}{2}-\frac{2}{pd}\bg)\|\nabla_x u_\ld\|_{2}^2
+\frac{1}{q+2}\bg(\frac{q}{p}-1\bg)\|u_\ld\|_{q+2}^{q+2}\\
&\geq \bg(\frac{1}{2}-\frac{2}{p d}\bg)\|\nabla_x u_\ld\|_2^2+\frac{1}{q+2}\bg(\frac{q}{p}-1\bg)\|u_\ld\|_{q+2}^{q+2}\\
&\geq 2\pi\bg(\bg(\frac{1}{2}-\frac{2}{p d}\bg)\|\nabla_x u\|_{L_x^2}^2+\frac{1}{q+2}\bg(\frac{q}{p}-1\bg)\|u\|_{L_x^{q+2}}^{q+2}
\bg)+o_\ld(1)\\
&=2\pi \wmH_{\infty}(u)+o_\ld(1)\geq 2\pi\wm_{(2\pi)^{-1},\infty}+o_\ld(1).
\end{aligned}
\end{equation}
Letting $\ld\to\infty$ and taking \eqref{upper bound sec4} into account we conclude \eqref{limit ld to infty energy sec4}. Finally,
\eqref{vanishing sec4} follows directly from the computation in \eqref{vanishing 5 sec4} without neglecting $\ld\|u_\ld\|_2^2$ therein. This completes the desired
proof.
\end{proof}

\begin{lemma}\label{strong convergence u ld}
Let $u_\ld$ and $u$ be the functions given in the proof of Lemma \ref{auxiliary lemma 1}. Then $u_\ld\to u$ strongly in $H_{x,y}^1$.
\end{lemma}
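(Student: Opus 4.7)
The plan is to upgrade the weak convergence $u_\ld \rightharpoonup u$ in $H^1_{x,y}$ to strong convergence by establishing convergence of the full $H^1$-norm and appealing to the Hilbert-space identity (weak convergence plus norm convergence implies strong convergence). So I need to show that
\[\|u_\ld\|_2^2\to\|u\|_2^2,\qquad \|\pt_y u_\ld\|_2^2\to\|\pt_y u\|_2^2,\qquad \|\nabla_x u_\ld\|_2^2\to\|\nabla_x u\|_2^2.\]
The first of these is immediate since $\mM(u_\ld)=1$ for every $\ld$ and, as established in the proof of Lemma \ref{auxiliary lemma 1}, $\wmM(u)=(2\pi)^{-1}$ with $u$ independent of $y$, so $\mM(u)=2\pi\wmM(u)=1$. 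The second follows directly from \eqref{vanishing1 sec4}, since $u$ is $y$-independent and hence $\|\pt_y u\|_2=0$.

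The real work is the third identity, and it will be extracted from the precise equality
\[m_{1,\ld}=\mI_\ld(u_\ld)=\frac{\ld}{2}\|\pt_y u_\ld\|_2^2+\bg(\frac12-\frac{2}{pd}\bg)\|\nabla_x u_\ld\|_2^2+\frac{1}{q+2}\bg(\frac{q}{p}-1\bg)\|u_\ld\|_{q+2}^{q+2}\]
already written down in \eqref{vanishing 5 sec4}. I would argue as follows: take any subsequence $(\ld_k)$ and, using the $H_{x,y}^1$-boundedness of $(u_\ld)$ from \eqref{upper bound 2 sec4}, extract a further subsequence (not relabeled) along which $\|\nabla_x u_{\ld_k}\|_2^2\to A$ and $\|u_{\ld_k}\|_{q+2}^{q+2}\to B$ for some finite $A,B\ge 0$. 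Weak lower semicontinuity (with weak $L^{q+2}$-convergence justified by $u_\ld\rightharpoonup u$ in $H_{x,y}^1$ and the subcritical embedding) gives
\[A\ge\|\nabla_x u\|_2^2=2\pi\|\nabla_x u\|_{L_x^2}^2,\qquad B\ge\|u\|_{q+2}^{q+2}=2\pi\|u\|_{L_x^{q+2}}^{q+2}.\]
Combining with \eqref{vanishing sec4} and \eqref{limit ld to infty energy sec4}, the displayed identity passes to the limit and yields
\[2\pi\wm_{(2\pi)^{-1},\infty}=\bg(\frac12-\frac{2}{pd}\bg)A+\frac{1}{q+2}\bg(\frac{q}{p}-1\bg)B\ge 2\pi\wmI(u)=2\pi\wm_{(2\pi)^{-1},\infty},\]
where the last equality uses $\wmK_\infty(u)=0$ and that $u$ optimizes $\wm_{(2\pi)^{-1},\infty}$. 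Since $q>p>4/d$ makes both coefficients strictly positive, equality forces $A=2\pi\|\nabla_x u\|_{L_x^2}^2=\|\nabla_x u\|_2^2$ and $B=\|u\|_{q+2}^{q+2}$. As this holds along every subsequence, the full sequence converges, giving the desired $\|\nabla_x u_\ld\|_2^2\to\|\nabla_x u\|_2^2$.

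With all three norm convergences in hand, I conclude $\|u_\ld\|_{H_{x,y}^1}\to\|u\|_{H_{x,y}^1}$, and the Hilbert-space fact upgrades $u_\ld\rightharpoonup u$ to $u_\ld\to u$ strongly in $H_{x,y}^1$. The only mildly delicate point is the subsequence bookkeeping needed to separate the contributions of $\|\nabla_x u_\ld\|_2^2$ and $\|u_\ld\|_{q+2}^{q+2}$ in the limit, since the original chain of inequalities in \eqref{vanishing 5 sec4} only bounds their sum; the saturation of the minimization problem in the limit is exactly what forces each term individually to attain its lower semicontinuous bound.
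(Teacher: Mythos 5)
Your proof is correct and takes essentially the same approach as the paper's one-line argument, which simply notes that ``all the inequalities involving the weakly lower semicontinuity of norms are in fact equalities'' in \eqref{vanishing 5 sec4} and then invokes the standard Hilbert-space fact. Your subsequence bookkeeping is a careful way of making precise that the saturation of the chain in \eqref{vanishing 5 sec4} (using \eqref{vanishing sec4}, \eqref{limit ld to infty energy sec4}, the strict positivity of the coefficients $\tfrac12-\tfrac{2}{pd}$ and $\tfrac{q}{p}-1$, and the optimality of $u$ for $\wm_{(2\pi)^{-1},\infty}$) forces each term, not merely their sum, to achieve its lower semicontinuous bound.
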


\begin{proof}
This simply follows from the observation that in the proof of Lemma \ref{auxiliary lemma 1}, all the inequalities involving the weakly lower
semicontinuity of norms are in fact equalities. Hence $\|u_\ld\|_{H_{x,y}^1}\to \|u\|_{H_{x,y}^1}=2\pi\|u\|_{H_x^1}$ as $\ld\to\infty$, which
in turn implies the strong convergence of $u_\ld$ to $u$ in $H_{x,y}^1$.
\end{proof}

The following lemma shares the same proof of \cite[Lem. 3.5]{TTVproduct2014}. The only difference is that in our setting we have an additional
nonlinear term $\ld^{\frac{p}{q}-1}|u_\ld|^p u_\ld$. This is however harmless since we are pushing $\ld$ to infinity and
consequently $\lim_{\ld\to\infty}\ld^{\frac{p}{q}-1}=0$. We thus omit the proof of the following lemma and refer the details to \cite[Lem.
3.5]{TTVproduct2014}.

\begin{lemma}[\cite{TTVproduct2014}]\label{lemma no dependence}
There exists some $\ld_0$ such that $\pt_y u_\ld=0$ for all $\ld>\ld_0$.
\end{lemma}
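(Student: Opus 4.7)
The plan is to mimic the argument of \cite[Lem.~3.5]{TTVproduct2014}, with the extra term $\ld^{p/q-1}|u_\ld|^p u_\ld$ treated as a lower-order perturbation since $\ld^{p/q-1}\to 0$ as $\ld\to\infty$. Decompose $u_\ld = v_\ld + w_\ld$, where $v_\ld(x) := (2\pi)^{-1}\int_\T u_\ld(x,y)\,dy$ is the zeroth $y$-Fourier mode and $w_\ld$ is the oscillating remainder with zero $y$-mean. Poincar\'e on $\T$ then gives $\|w_\ld\|_2\leq \|\pt_y w_\ld\|_2$, and by Lemma \ref{strong convergence u ld}, $w_\ld\to 0$ in $H_{x,y}^1$, while $\|u_\ld\|_{p+2}$ and $\|u_\ld\|_{q+2}$ remain uniformly bounded.

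Suppose by contradiction that $w_\ld\not\equiv 0$ along a sequence $\ld\to\infty$. Testing \eqref{vanishing 3 sec4} against $w_\ld$, the $y$-independence of $v_\ld$ together with the zero $y$-mean of $w_\ld$ makes all cross-terms in the linear part vanish, yielding
\begin{equation*}
\|\nabla_x w_\ld\|_2^2 + \ld\|\pt_y w_\ld\|_2^2 + \beta_\ld\|w_\ld\|_2^2 = \int_{\R^d\times\T}\bigl(f(u_\ld)-f(v_\ld)\bigr)w_\ld\,dx\,dy
\end{equation*}
with $f(s):=\ld^{p/q-1}|s|^p s+|s|^q s$; note $\int f(v_\ld) w_\ld\,dx\,dy=0$ by the same symmetry reason. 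The mean-value theorem and H\"older give
\begin{equation*}
\Bigl|\int\bigl(f(u_\ld)-f(v_\ld)\bigr)w_\ld\,dx\,dy\Bigr| \lesssim \bigl(\ld^{p/q-1}\|u_\ld\|_{p+2}^p\bigr)\|w_\ld\|_{p+2}^2 + \|u_\ld\|_{q+2}^q\|w_\ld\|_{q+2}^2.
\end{equation*}

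Normalize by setting $\tilde w_\ld := w_\ld/\|w_\ld\|_2$ and dividing the identity by $\|w_\ld\|_2^2$:
\begin{equation*}
\|\nabla_x\tilde w_\ld\|_2^2 + \ld\|\pt_y\tilde w_\ld\|_2^2 + \beta_\ld \leq C\bigl(\|\tilde w_\ld\|_{p+2}^2+\|\tilde w_\ld\|_{q+2}^2\bigr),
\end{equation*}
with $C$ independent of $\ld$ for $\ld\geq 1$. Since $q+2<2(d+1)/(d-1)$, the Gagliardo--Nirenberg interpolation on $\R^d\times\T$, combined with $\|\tilde w_\ld\|_2=1$, yields $\|\tilde w_\ld\|_{q+2}^2\leq C\|\tilde w_\ld\|_{H_{x,y}^1}^{2(1-\theta)}$ for some $\theta>0$, and analogously for the $L^{p+2}$-norm. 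Young's inequality absorbs the sub-quadratic $H^1$-factor into the left-hand side, leaving
\begin{equation*}
\tfrac12\|\nabla_x\tilde w_\ld\|_2^2 + \tfrac12\ld\|\pt_y\tilde w_\ld\|_2^2 \leq C'
\end{equation*}
for some $C'$ independent of $\ld$. Hence $\|\pt_y\tilde w_\ld\|_2^2\leq 2C'/\ld$. But the Poincar\'e inequality together with $\|\tilde w_\ld\|_2=1$ gives $\|\pt_y\tilde w_\ld\|_2^2\geq 1$, contradicting $2C'/\ld\to 0$ for $\ld$ sufficiently large. Thus $w_\ld\equiv 0$, i.e., $\pt_y u_\ld=0$, once $\ld>\ld_0$.

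The main obstacle is the Gagliardo--Nirenberg step: the sub-quadratic absorption must be genuine so that the surplus factor $\ld$ in front of $\|\pt_y\tilde w_\ld\|_2^2$ can be contrasted with the Poincar\'e lower bound. This is precisely the place where the subcriticality $p,q<4/(d-1)$ is essential, and where the computation of \cite{TTVproduct2014} is reused; the decaying factor $\ld^{p/q-1}$ renders the $p$-contribution no worse than the $q$-contribution throughout.
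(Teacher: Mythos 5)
Your proof is correct and is essentially the argument the paper has in mind: the paper omits the details and refers to \cite[Lem.~3.5]{TTVproduct2014}, observing that the extra $\ld^{p/q-1}|u_\ld|^p u_\ld$ term is harmless since $\ld^{p/q-1}\to 0$ as $\ld\to\infty$. Your decomposition into the $y$-mean and the zero-mean remainder $w_\ld$, testing the Euler--Lagrange equation against $w_\ld$, and then contrasting the subcritical Gagliardo--Nirenberg bound on the right with the $\ld$-weighted Poincar\'e term on the left is precisely the Terracini--Tzvetkov--Visciglia argument, with the $p$-nonlinearity tracked as a vanishing perturbation and the required uniform constant supplied by Lemma~\ref{strong convergence u ld}.
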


After having all the preliminaries, we are now able to give the proof of Lemma \ref{lemma auxiliary}.

\begin{proof}[Proof of Lemma \ref{lemma auxiliary}]
Define
\begin{align*}
\ld^*:=\inf\{\tilde{\ld}\in(0,\infty):m_{1,\ld}=2\pi \wm_{(2\pi)^{-1},\ld}\,\forall\,\ld\geq \tilde{\ld}\}.
\end{align*}
From Lemma \ref{lemma no dependence} we already know that $\ld^*\in(0,\infty)$ and the second part of Lemma \ref{lemma auxiliary} holds for the
defined number $\ld^*$. It is left to construct the positive number $\ld_*\in(0,\infty)$ as required in Lemma \ref{lemma auxiliary}.

We first show for any $c\in(0,\infty)$ it holds $\lim_{\ld\to 0}\wm_c^{\ld}=\wm_c^{0}$. Denote by $U^\ld$ a minimizer of $\wm_c^{\ld}$ whose
existence can be deduced by using a similar proof as the one of Theorem \ref{thm existence of ground state}. In particular, $\wmK^\ld(U^0)\leq
\wmK^0(U^0)=0$, which combining Lemma \ref{3.7lem} implies
$$0\leq \sup_{\ld\in(0,1]}\wm_c^{\ld}\leq \sup_{\ld\in(0,1]}\wmI^\ld(U^0)\in(0,\infty). $$
Hence $(\wm_c^{\ld})_{\ld\in(0,1)}$ is a bounded sequence. Using the arguments given in the proof of Corollary \ref{cor lower bound} it follows
that $(U^{\ld})_{\ld\in(0,1)}$ is a bounded sequence in $H_x^1$ with $\liminf_{\ld\to 0}\|U^\ld\|_{L_x^{p+2}}>0$. We may now use the completely
same arguments given in the proof of Theorem \ref{thm existence of ground state} to show that (up to a subsequence) $U^\ld$ converges to some
$\tilde{U}^0$ strongly in $H_x^1$ with $\tilde{U}^0$ being a minimizer of $\wm_c^{0}$. This in turn implies the desired claim.

To proceed, we next construct an auxiliary function $\rho$ as follows: Let $a\in(0,\pi)$ such that
$a>\pi-3\pi\bg(\frac{3}{p+3}\bg)^{\frac2p}$. This is always possible for $a$ sufficiently close to $\pi$. Then we define $\rho$ by
\begin{align*}
\rho(y)=\left\{
\begin{array}{ll}
0,&y\in[0,a]\cup[2\pi-a,2\pi],\\
(\pi-a)^{-1}\bg(\frac{p+3}{3}\bg)^{\frac{1}{p}}(y-a),&y\in[a,\pi],\\
(\pi-a)^{-1}\bg(\frac{p+3}{3}\bg)^{\frac{1}{p}}(2\pi-a-y),&y\in[\pi,2\pi-a].
\end{array}
\right.
\end{align*}
By direct computation and $q>p$ one easily verifies that $\rho\in H_y^1$ and
\begin{align}\label{new1}
\|\rho\|_{L_y^2}^2=\|\rho\|_{L_y^{p+2}}^{p+2}<\min\{2\pi,\|\rho\|_{L_y^{q+2}}^{q+2}\}.
\end{align}
Now let $P^\ld\in H_x^1$ be an optimizer of $\wm_{\|\rho\|_{L_y^2}^{-2}}^\ld$. Using arguments as in the proof of Corollary \ref{cor lower bound} one easily verifies that $(P^\ld)_{\ld\in[0,1)}$ is a bounded sequence in $H_x^1$. By Lemma \ref{lem wmc property}, the mapping $c\mapsto\wm_c^0$ is
strictly decreasing on $(0,\infty)$. Hence $\|\rho\|_{L_y^2}^{-2}>(2\pi)^{-1}$ implies
$\delta:=\wm_{(2\pi)^{-1}}^0-\wm_{\|\rho\|_{L_y^2}^{-2}}^0>0$. Next, define $\psi^\ld(x,y):=\rho(y)P^\ld(x)$. Then $(\psi^\ld)_{\ld\in(0,1]}$ is a bounded sequence in $H_{x,y}^1$ with $\mM(\psi^\ld)=\|\rho\|_{L_y^2}^2\wmM(P^\ld)=1$. For given $\ld$ let $t^\ld\in(0,\infty)$ be given such that $\mK^\ld((\psi^\ld)^{t^\ld})=0$, where
$(\psi^\ld)^{t^\ld}$ is defined by \eqref{def of scaling op}. Using \eqref{new1} it follows $\mK^0(\psi^0)=0$. This, in conjunction with standard
continuity arguments, implies $t^\ld\to 1$ as $\ld\to 0$. Using also $\lim_{\ld\to 0}\wm_c^{\ld}=\wm_c^{0}$ for any $c\in(0,\infty)$
we conclude that
\begin{align*}
m_1^\ld&\leq \mH^\ld((\psi^\ld)^{t^\ld})\leq \|\rho\|_{L_y^2}^2 \wmH^\ld ((P^\ld)^{t^\ld})+o_\ld(1)\\
&=\|\rho\|_{L_y^2}^2\wm_{\|\rho\|_{L_y^2}^{-2}}^\ld+o_{\ld}(1)=\|\rho\|_{L_y^2}^2\wm_{\|\rho\|_{L_y^2}^{-2}}^0+o_\ld(1)\\
&\leq 2\pi(\wm_{(2\pi)^{-1}}^0-\delta)+o_\ld(1)=2\pi\wm_{(2\pi)^{-1}}^\ld-2\pi\delta+o_\ld(1)
\end{align*}
as $\ld\to 0$. This implies $m_{1}^\ld<2\pi \wm_{(2\pi)^{-1}}^\ld$ for all sufficiently small $\ld$.

Finally, we borrow an idea from \cite{GrossPitaevskiR1T1} to show that any minimizer of $m_{1,\ld}$ for $\ld>\ld_*$ must be $y$-independent. Assume the contrary that an optimizer $u_\ld$ of $m_{1,\ld}$ satisfies $\|\pt_y u_\ld\|_2^2\neq 0$. Since $\ld>\ld_*$, there exists some $\kappa$ strictly lying between $\ld_*$ and $\ld$. Then
\begin{align*}
2\pi \wm_{(2\pi)^{-1},\kappa}=m_{1,\kappa}\leq \mH_{\kappa}(u_\ld)=\mH_{\ld}(u_\ld)+\frac{\kappa-\ld}{2}\|\pt_y u_\ld\|_2^2<\mH_{\ld}(u_\ld)=m_{1,\ld}=2\pi \wm_{(2\pi)^{-1},\ld}.
\end{align*}
Nevertheless, using the characterization \eqref{mtilde equal m} for $\wm_{c,\ld}$ one easily deduces that $\wm_{(2\pi)^{-1},\kappa}\geq \wm_{(2\pi)^{-1},\ld}$ and we hence obtain a contradiction. This completes the desired proof.
\end{proof}

\subsection{Proof of Theorem \ref{thm threshold mass}}
We are in a final position to prove Theorem \ref{thm threshold mass}.

\begin{proof}[Proof of Theorem \ref{thm threshold mass}]
For $c>0$ and $\alpha\in\{p,q\}$ let $\kappa_{c,\alpha}:=c^{\frac{1}{d-\frac4\alpha}}$. Define also
$$T_{\ld,\alpha} u(x,y):=\ld^{\frac{2}{\alpha}}u(\ld x,y).$$
Then $u\mapsto T_{\kappa_{c,\alpha},\alpha} u$ defines a bijection between $V(c)$ and $V_{\kappa^2_{c,q}}(1)$ and
$V^{\kappa^2_{c,p}}(1)$ respectively. By using simple scaling arguments one also infers that
$$m_{c}=c^{\frac{d-4/q-2}{d-4/q}}m_{1,\kappa_{c,q}^{2}}=c^{\frac{d-4/p-2}{d-4/p}}m_{1}^{\kappa_{c,p}^{2}}.$$
By same arguments we also deduce that $\wm_{(2\pi)^{-1}c}=c^{\frac{d-4/q-2}{d-4/q}}\wm_{(2\pi)^{-1},\kappa_{c,q}^{2}}
=c^{\frac{d-4/p-2}{d-4/p}}\wm_{(2\pi)^{-1}}^{\kappa_{c,p}^{2}}$ for $c>0$. Notice also that the mapping $c\mapsto \kappa_{c,\alpha}$ is
strictly monotone increasing on $(0,\infty)$. Thus by Lemma \ref{lemma auxiliary} there exists some $c_*,c^*\in(0,\infty)$ such that
\begin{itemize}
\item For all $c\in(0,c_*)$ we have
$$m_{c}=c^{\frac{d-4/p-2}{d-4/p}}m_{1}^{\kappa_{c,p}^2}<c^{\frac{d-4/p-2}{d-4/p}}2\pi \wm_{(2\pi)^{-1}}^{\kappa_{c,p}^2}=2\pi
\wm_{(2\pi)^{-1}c}.$$

\item For all $c\in(c^*,\infty)$ we have
$$m_{c}=c^{\frac{d-4/q-2}{d-4/q}}m_{1,\kappa_{c,q}^2}=c^{\frac{d-4/q-2}{d-4/q}}2\pi \wm_{(2\pi)^{-1},\kappa_{c,q}^2}=2\pi
\wm_{(2\pi)^{-1}c}.$$
\end{itemize}
By the definitions of $c_*$ and $c^*$ it is also clear that $c_*\leq c^*$. This completes the proof of the first part of Theorem \ref{thm
threshold mass}.

It remains to prove the second part of Theorem \ref{thm threshold mass}. In fact, the proof is very similar to the one of the first part, we
hence only give the key steps of the proof without establishing the full details.

Mimicking the proof of Lemma \ref{lemma auxiliary}, we aim to prove the following claim: Let $\gamma_{1,\ld},\gamma_{1}^{\ld},\widehat{\gamma}_{1,\ld},\widehat{\gamma}_1^\ld$ be the quantities defined through \eqref{1.31} and \eqref{1.33}. Then there exist some $0<\ld_*,\ld^*<\infty$ such that
\begin{itemize}
\item[(i)] For all $\ld \in (0,\ld_*)$ we have $\gamma_{1}^\ld<2\pi \widehat{\gamma}_{1}^\ld$ and any minimizer $u^\ld$ of $\gamma_{1}^\ld$ satisfies $\pt_y u^\ld\neq 0$.

\item[(ii)] For all $\ld\in(\ld^*,\infty)$ we have $\gamma_{1,\ld}=2\pi \widehat{\gamma}_{1,\ld}$ and any minimizer $u_\ld$ of $\gamma_{1,\ld}$ satisfies $\pt_y u_\ld= 0$.
\end{itemize}
The proof of (i) follows from a straightforward modification of the proof of Theorem \ref{thm threshold mass}. However, the proof of (ii) is completely different. The main reason here is that the variational problem $\widehat{\gamma}_1^0$ corresponds to a defocusing problem which admits no minimizers, hence we are unable to use a profile $P^0$ in the limiting case as in the proof of Lemma \ref{lemma auxiliary}.

We use a different argument to overcome this issue: Let $\rho$ be the function defined in the proof of Lemma \ref{lemma auxiliary}. For $\ld>0$ also let $P^\ld$ be an optimizer of $\widehat{\gamma}_1^\ld$. In this case, we also define $\psi^\ld:=\rho P^\ld$. Using \eqref{new1} it follows $\mK^\ld(P^\ld)<2\pi\wmK(P^\ld)=0$. Hence arguing as in the proof of Lemma \ref{3.7lem} (by also noticing that $\mu=1$) and using \eqref{new1} we obtain
\begin{align*}
    \gamma_1^\ld &\leq \mS_{1}^{\ld}(\psi^\ld)-\frac{2}{qd}\mK^\ld(\psi^\ld)
    =\frac{\ld}{2} \|\pt_y\rho\|_{L_y^2}^2 \|P^\ld\|_{L_x^2}^2+\|\rho\|_{L_y^2}^2 (\wmS_{1}^{\ld}(P^\ld)-\frac{2}{qd}\wmK^\ld(P^\ld))\\
    &\leq \ld \|\pt_y\rho\|_{L_y^2}^2\widehat{\gamma}_1^\ld+(2\pi-\tilde{\delta})\widehat{\gamma}_1^\ld
    =2\pi\widehat{\gamma}_1^\ld-(\tilde{\delta}-\ld \|\pt_y\rho\|_{L_y^2}^2)\widehat{\gamma}_1^\ld<2\pi\widehat{\gamma}_1^\ld
\end{align*}
for $\ld<\tilde{\delta}\|\pt_y\rho\|_{L_y^2}^{-2}$, where $\tilde{\delta}:=2\pi-\|\pt_y\rho\|_{L_y^2}^2\in(0,2\pi)$. This completes the proof of the claim.

Next, one easily verifies that $u\mapsto T_{\sqrt{\omega}^{-1},\alpha} u$ defines a bijection between the sets $\{u\in H_{x,y}^1:\mK(u)=0\}$ and $\{u\in H_{x,y}^1:\mK_{\omega^{-1}}(u)=0\}$ ($\alpha=q$) respectively $\{u\in H_{x,y}^1:\mK^{\omega^{-1}}(u)=0\}$ ($\alpha=p$). Moreover, we have
\begin{align*}
    \gamma_{\omega}&=\omega^{-\frac{d-4/q-2}{2}}\gamma_{1,\omega^{-1}}=\omega^{-\frac{d-4/p-2}{2}}\gamma_{1}^{\omega^{-1}},\\
    \widehat{\gamma}_{\omega}&=\omega^{-\frac{d-4/q-2}{2}}\widehat{\gamma}_{1,\omega^{-1}}=\omega^{-\frac{d-4/p-2}{2}}\widehat{\gamma}_{1}^{\omega^{-1}}.
\end{align*}
The desired claim then follows by using the same scaling arguments given previously and by noticing that the mapping $\omega\mapsto\omega^{-\frac12}$ is strictly monotone decreasing on $(0,\infty)$. This completes the desired proof.
\end{proof}

\section{Qualitative and quantitative blow-up results: Proof of Theorem \ref{thm-blowup} and \ref{thm-blowup-rate}}\label{sec 4}
This section is devoted to proving the blow-up results Theorem \ref{thm-blowup} and \ref{thm-blowup-rate}. We start with the proof of the qualitative one.

\subsection{Existence of blow-up solutions: Proof of Theorem \ref{thm-blowup}}
We start with some a preliminary lemma.

\begin{lemma}\label{lem:Q-control}
Suppose that the initial datum $u_0$ satisfies
\[
E(u_0)<m_{\|u_0\|_2^2}\quad \text{and}\quad Q(u_0)<0
\]
when $\mu=-1$, or
\[
S_{\omega}(u_0)<\gamma_\omega\quad\text{and}\quad Q(u_0)<0
\]
when $\mu=1$. Then there exists a strictly positive $\delta$ such that $Q(u(t))\leq-\delta$ for any $t\in(-T_{\min}, T_{\max}).$ More precisely,  there exists a constant $\delta'>0$, independent of $t$, such that
\begin{equation}\label{Q-better-control}
Q(u(t))\leq-\delta'\|\nabla_{x,y} u(t)\|_2^2
\end{equation}
for any $t\in(-T_{\min}, T_{\max}).$
\end{lemma}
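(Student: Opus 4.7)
The plan is to organize the proof into three stages: a continuity trap for the sign of $Q$, a uniform constant gap $\delta$, and a strengthening to a gap proportional to $\|\nabla_{x,y}u(t)\|_2^2$ via coercivity of $\mI$.

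First I would transport the strict inequalities along the flow. Conservation of mass gives $M(u(t))\equiv M(u_0)$, and conservation of $E$ (resp.\ $S_\omega$) gives $E(u(t))\equiv E(u_0)<m_{M(u_0)}$ in the case $\mu=-1$ (resp.\ $S_\omega(u(t))\equiv S_\omega(u_0)<\gamma_\omega$ in the case $\mu=1$). Since $t\mapsto u(t)\in H^1_{x,y}$ is continuous and $Q:H^1_{x,y}\to\R$ is continuous, if $Q(u(t_0))=0$ for some $t_0$ then $u(t_0)$ would belong to the admissible set used to define $m_{M(u_0)}$ (resp.\ $\gamma_\omega$), forcing $E(u(t_0))\geq m_{M(u_0)}$ (resp.\ $S_\omega(u(t_0))\geq\gamma_\omega$) and contradicting the conserved strict inequality. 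Hence $Q(u_0)<0$ propagates and $Q(u(t))<0$ on the whole maximal interval.

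For the constant $\delta$, I would invoke the alternative characterization $m_{M(u_0)}=\inf\{\mI(u):u\in S(M(u_0)),\,Q(u)\leq 0\}$ from Lemma \ref{3.7lem}, together with its $\gamma_\omega$-analogue that appears inside Subsection \ref{sub nehari} (with $\tilde\mI:=S_\omega-\tfrac{2}{pd}Q$ playing the role of $\mI$). Since $Q(u(t))\leq 0$ from the trap above, this yields $\mI(u(t))\geq m_{M(u_0)}$ (resp.\ $\tilde\mI(u(t))\geq\gamma_\omega$). Using the algebraic identity $\mI=E-\tfrac{2}{pd}Q$ and energy conservation,
\[-Q(u(t))\geq \tfrac{pd}{2}\bigl(m_{M(u_0)}-E(u_0)\bigr)=:\delta>0,\]
with the analogous bound involving $\gamma_\omega-S_\omega(u_0)$ for $\mu=1$.

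To obtain \eqref{Q-better-control}, I would exploit the fact that $p>4/d$ and $q>p$ make every coefficient in the explicit formula \eqref{def of mI} nonnegative, which yields the trivial coercivity
\[\mI(u)\geq \bigl(\tfrac12-\tfrac{2}{pd}\bigr)\|\nabla_{x,y}u\|_2^2.\]
Feeding this back into the identity of the previous step produces $-Q(u(t))\geq\tfrac{pd-4}{4}\|\nabla_{x,y}u(t)\|_2^2-\tfrac{pd}{2}E(u_0)$. A short dichotomy then closes the argument: when $\|\nabla_{x,y}u(t)\|_2^2$ is large enough that the coercive term dominates half of itself, it already gives $-Q(u(t))\geq\tfrac{pd-4}{8}\|\nabla_{x,y}u(t)\|_2^2$; when $\|\nabla_{x,y}u(t)\|_2^2$ stays below that threshold $R_0$, the constant gap yields $-Q(u(t))\geq\delta\geq(\delta/R_0)\|\nabla_{x,y}u(t)\|_2^2$. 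Setting $\delta':=\min\{(pd-4)/8,\,\delta/R_0\}$ finishes the proof, and the $\mu=1$ case is treated identically after absorbing the harmless $\tfrac{\omega}{2}M(u_0)$ term into the constants.

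I expect the main obstacle to be bookkeeping in the $\mu=1$ case: it requires spelling out explicitly the $\gamma_\omega$-analogue of Lemma \ref{3.7lem}, which is used only implicitly inside Subsection \ref{sub nehari}, and verifying that the coercive part of $\tilde\mI$ still dominates $\|\nabla_{x,y}u\|_2^2$ after the $\tfrac{\omega}{2}M$ correction. Everything else is direct re-assembly of estimates already established in the variational analysis.
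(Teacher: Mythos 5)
Your proof is correct and reaches the same conclusions. The continuity trap for the sign of $Q$ is the same as in the paper. Where you diverge is in the two quantitative bounds: the paper obtains the constant gap by integrating $\tfrac{d}{d\lambda}E(u_0^\lambda)=\lambda^{-1}Q(u_0^\lambda)$ between the fiber zero $\tilde\lambda$ and $1$ (using the monotonicity from Lemma~\ref{monotoneproperty}), whereas you read $\mI(u(t))\geq m_{M(u_0)}$ directly off the characterization $m_c=\tilde m_c$ of Lemma~\ref{3.7lem}, which combined with the identity $\mI=E-\tfrac{2}{pd}Q$ and energy conservation gives a (factor-$\tfrac{pd}{2}$ sharper, but equally valid) constant gap. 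For the refined bound \eqref{Q-better-control}, the paper writes down the explicit decompositions \eqref{Q-df}--\eqref{Q-ff}, discards the nonnegative terms, and closes with an $\varepsilon$-interpolation trick, while you recover the algebraically identical key inequality $\bigl(\tfrac{pd}{4}-1\bigr)\|\nabla_{x,y}u\|_2^2\leq\tfrac{pd}{2}E(u)-Q(u)$ from the coercivity of $\mI$ and close with a dichotomy on the size of $\|\nabla_{x,y}u(t)\|_2^2$. Your route is a bit more economical in that it reuses Lemma~\ref{3.7lem} for both quantitative steps and never needs to introduce \eqref{Q-df}--\eqref{Q-ff}; the only overhead, which you correctly anticipate, is that the $\gamma_\omega$-analogue of Lemma~\ref{3.7lem} (mentioned in passing in Subsection~\ref{sub nehari}) must be stated explicitly for $\mu=1$. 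Your observation that the $\tfrac{\omega}{2}M$ term in $\tilde\mI$ is nonnegative and thus only helps the coercivity is also correct.
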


\begin{proof}
Let us consider the case $\mu=-1$. Firstly, we suppose by the absurd that $Q(u(t))>0$ for some time $t\in(-T_{\min}, T_{\max})$. Then by the
continuity in time of the function $Q(u(t)),$ there exists $\tilde t$ such that $Q(u(\tilde t))=0.$ By definition of the functional $m_c$ and
the conservation of the mass, we have therefore that $m_{\|u_0\|_2^2}\leq E(u(\tilde t))=E(u_0),$ which is a contradiction with respect to
the hypothesis.

For the control away from zero, by means of Lemma \ref{monotoneproperty} we infer the existence of $\tilde\lambda\in(0,1)$ such that
$Q(u_0^{\tilde\lambda})=0$ and
\[
\frac{d}{d\lambda}(E(u^\lambda_0))(\lambda)\geq\frac{d}{d\lambda}(E(u^\lambda_0))(1)=Q(u_0)
\]
for $\lambda\in(\tilde\lambda,1)$. Hence,
\begin{equation}\label{est-Q-control}
\begin{aligned}
E(u_0)&=E(u_0^{\tilde \lambda})+\int_{\tilde\lambda}^{1}\frac{d}{d\lambda}(E(u^\lambda_0))(\lambda)d\lambda\geq
E(u_0^{\tilde\lambda})+(1-\tilde\lambda)\frac{d}{d\lambda}(E(u^\lambda_0))(1)\\
&=E(u_0^{\tilde\lambda})+(1-\tilde\lambda)Q(u_0)>m_{\|u_0\|_2^2}+Q(u_0),
\end{aligned}
\end{equation}
which in turn implies the bound of the Lemma with $\delta=E(u_0)-m_{\|u_0\|_2^2}$. By the energy conservation, for any other $t$ belonging
to the maximal time of existence, it suffices to repeat the argument above.

Similarly, in the focusing-defocusing case $\mu=1$, if by the absurd we have the existence of $\tilde t$ such that
$Q(u(\tilde t))=0$, then we conclude, by conservation of mass and energy, that $S_{\omega}(u_0)=S_{\omega}(u(\tilde t))\geq \gamma_\omega$,
which contradicts the hypothesis.  Moreover, recalling that the scaling $u^\lambda$ (see \eqref{def of scaling op}) leaves invariant the mass, we have an  estimate analogous
to \eqref{est-Q-control}. Specifically,
\begin{equation}\label{est-Q-control-2}
\begin{aligned}
S_\omega(u_0)&=E(u_0)+\frac{\omega}{2}M(u_0)=E(u_0^{\tilde
\lambda})+\int_{\tilde\lambda}^{1}\frac{d}{d\lambda}(E(u^\lambda_0))(\lambda)d\lambda +\frac{\omega}{2}M(u_0)\\
&\geq
E(u_0^{\tilde\lambda})+(1-\tilde\lambda)\frac{d}{d\lambda}(E(u^\lambda_0))(1)+\frac{\omega}{2}M(u_0)=E(u_0^{\tilde\lambda})+\frac{\omega}{2}M(u_0^{\tilde\lambda})+(1-\tilde\lambda)Q(u_0)\\
&=S_{\omega}(u_0^{\tilde\lambda})+(1-\tilde\lambda)Q(u_0)>\gamma_{\omega}+Q(u_0),
\end{aligned}
\end{equation}
then the claim follows by \eqref{est-Q-control-2} and $\delta=S_\omega(u_0)-\gamma_{\omega}$.
By the energy conservation, for any other $t$ belonging to the maximal time of existence, it suffices to repeat the argument above.

In order to prove the refined control \eqref{Q-better-control}, recall the following identities for $\mu=1$ and $\mu=-1$, respectively,
\begin{equation}\label{Q-df}
\begin{aligned}
Q(u)&=\|\nabla_x u\|_2^2+\frac{dp}{2(p+2)}\|u\|_{p+2}^{p+2}-\frac{dq}{2(q+2)}\|u\|_{q+2}^{q+2}\\
&=\frac{dq}{2}E(u)+\left(1-\frac{dq}{4}\right)\|\nabla_{x,y} u\|_2^2-\|\partial_yu\|_2^2+\frac{d}{2}\frac{p-q}{p+2}\|u\|_{p+2}^{p+2}
\end{aligned}
\end{equation}
and
\begin{equation}\label{Q-ff}
\begin{aligned}
Q(u)&=\|\nabla_x u\|_2^2-\frac{dp}{2(p+2)}\|u\|_{p+2}^{p+2}-\frac{dq}{2(q+2)}\|u\|_{q+2}^{q+2}\\
&=\frac{dp}{2}E(u)+\left(1-\frac{dp}{4}\right)\|\nabla_{x,y} u\|_2^2-\|\partial_yu\|_2^2+\frac{d}{2}\frac{p-q}{q+2}\|u\|_{q+2}^{q+2}.
\end{aligned}
\end{equation}


Consider first the case $\mu=-1$. By \eqref{Q-ff} and the fact $q>p>\frac4d$, it is
 straightforward to see that
\[
\left(\frac{dp}{4}-1\right)\|\nabla_{x,y} u\|_2^2\leq \frac{dp}{2}E(u)-Q(u),
\]
hence, by means of Lemma \ref{lem:Q-control},   for any $\varepsilon>0$
\[
Q(u)+\varepsilon\left(\frac{dp}{4}-1\right)\|\nabla_{x,y} u\|_2^2\leq \varepsilon\frac{dp}{2}E(u)+(1-\varepsilon)Q(u)\leq
\varepsilon\frac{dp}{2}E(u)+(1-\varepsilon)\delta.
\]
The conclusion follows by taking $\varepsilon$ small enough, recalling the conservation of the energy.\\

As for the defocusing-focusing case, namely $\mu=1$, we repeat the same argument, by using \eqref{Q-df}, Lemma \ref{lem:Q-control},
and again $q>p>\frac 4d$.
\end{proof}


We are now  ready to give the proof of the blow-up results in Theorem \ref{thm-blowup}.
\begin{proof}[Proof of Theorem \ref{thm-blowup}]
Given a smooth function  $\phi: \R^d \to \R$, we
introduce the virial function
\begin{equation}\label{localized-mass}
V_\phi(t):= \int_{\mathbb R^d\times\mathbb T} \phi(x) |u(t,x,y)|^2 dxdy.
\end{equation}

The following identities are nowadays classical (see e.g., \cite{Cazenave2003}), and when no confusion may arise, we omit the domain $\mathbb R^d\times\mathbb T$ along with the
space variables $(x,y)$ to lighten the notation:
\begin{equation}\label{localized-mass-diff}
V'_\phi(t) = 2\ima \int \nabla_x \phi \cdot \nabla_x u(t) \overline{u}(t) dxdy
\end{equation}
and
\begin{equation}\label{virial-identity2}
\begin{aligned}
V''_\phi(t) &= -\int \Delta_x^2 \phi |u(t)|^2 dxdy + 4 \sum_{j,k} \rea \int  \partial^2_{x_j x_k} \phi \partial_{x_j} u(t) \partial_{x_k}
\overline{u}(t) dxdy\\
& +\frac{2\mu p}{p+2} \int \Delta_x \phi |u(t)|^{p+2} dxdy - \frac{2q}{q+2} \int \Delta_x \phi |u(t)|^{q+2} dxdy.
\end{aligned}
\end{equation}

Suppose now that  the initial datum $u_0$ is radial with respect to the  Euclidean variable, i.e., $u_0=u_0(x,y)=u_0(|x|,y)$. Then the  radial symmetry of the solution remains for
any time in the maximal  lifespan. Let $\theta: [0,\infty) \to [0,2]$ a smooth function satisfying
\begin{align} \label{def-theta}
\theta(r)= \left\{
\begin{array}{ccl}
2 &\text{if} &0\leq r\leq 1, \\
0 &\text{if} & r\geq 2.
\end{array}
\right.
\end{align}
We define the function $\Theta: [0,\infty) \to [0,\infty)$ by
\[
\Theta(r):= \int_0^r \int_0^s \theta(\tau) d\tau ds.
\]
For $\varrho>0$, we define the radial function $\phi_\varrho: \R^d \to \R$ by
\begin{align} \label{phi-R-rad}
\phi_\varrho(x) = \phi_\varrho(r) := \varrho^2 \Theta(r/\varrho), \quad r=|x|.
\end{align}

Straightforward calculations yield to
\begin{equation}\label{vir-fin}
V''_{\phi_\varrho}(t) \leq 8Q(u(t))- \frac{2\mu p}{p+2} \int |k(x)|  |u(t)|^{p+2} dxdy +\frac{2q}{q+2} \int |k(x)| |u(t)|^{q+2}
dxdy+C\varrho^{-2}
\end{equation}
where $k=k(x)=k(|x|)$ is a non-positive radial function supported outside a ball of radius $\varrho$, centered at the origin of $\mathbb R^d$.

At this point we observe that if $\mu=1$ (defocusing-focusing case), the lower order term can be simply estimated by zero, hence we may focus
on the higher order term. Let us denote by \[
\mathfrak{m}(u)(x)=\frac{1}{2\pi}\int_{\mathbb T} u(x,y)dy
\]
the mean of $u$ with respect to the variable on $\mathbb T$, the compact component of the product manifold. By the estimate  $|a+b|^{c}\lesssim
|a|^c + |b|^c$ for $c\geq 1$, and the triangular inequality we estimate the two terms  $\||k|^{1/(q+2)}\mathfrak{m}(u)\|_{q+2}^{q+2}$ and
$\||k|^{1/(q+2)}(u-\mathfrak{m}(u))\|_{q+2}^{q+2}$. As $\mathfrak{m}(u)$ is independent of $y$, by the Strauss embedding (see e.g. \cite{ChoOwaza}), the Minkowski's inequality and the Jensen's inequality, we get
\[
\begin{aligned}
\int |k(x)| |\mathfrak{m}u|^{q+2} dxdy&\lesssim \int |k(x)||\mathfrak{m}(u)(x)|^{q+2}dx\\
&\lesssim \varrho^{-\frac{(d-1)q}{2}}\|\nabla_x \mathfrak{m}(u)\|_{L^2_x}^{q/2}\|\mathfrak{m}(u)\|_{L^2_x}^{q/2}\|\mathfrak{m}(u)\|_{L^2_x}^{2}\\
&\lesssim \varrho^{-\frac{(d-1)q}{2}}\|\nabla_{x,y} u\|_2^{q/2}\|u\|_2^{q/2+2}.
\end{aligned}
\]
The term $\||k|^{1/(q+2)}(u-\mathfrak{m}(u))\|_{q+2}^{q+2}$  is estimated as follows: first recall the  Sobolev embedding
\[
\|u-\mathfrak{m}(u)\|_{L_y^{q+2}}\lesssim \|u\|_{\dot H^{\frac{q}{2(q+2)}}_y},
\]
 see e.g. \cite{sobolev_torus}. Then writing also $u-\mathfrak{m}(u)$ in its Fourier expansion along the $y$-direction we obtain
\[
\begin{aligned}
\||k|^{1/(q+2)}(u-\mathfrak{m}(u))\|_{q+2}^{q+2}&= \int |k(x)||u-\mathfrak{m}(u)|^{q+2}dydx \\
&\lesssim\int\left(   |k(x)|\left( \sum_j |j|^{q/(q+2)}|u_j(x)|^{2}\right)^{\frac{q+2}{2}}\right)dx\\
&=\big\||k|^{1/(q+2)}\||j|^{q/(2q+4)}|u_j|\|_{l^2_j}\big\|_{L^{q+2}_x}^{q+2}.
\end{aligned}
\]
As $2+q\geq2$, we continue by using the Minkowski's inequality,
\[
\begin{aligned}
\||k|^{1/(q+2)}(u-\mathfrak{m}(u))\|_{q+2}^{q+2}&\leq \left(\sum_j |j|^{q/(q+2)}\||k|^{1/(q+2)}|u_j|\|_{L_x^{q+2}}^2\right)^{\frac{q+2}{2}}
\end{aligned}
\]
and by employing again the Strauss embedding theorem as before, we obtain
\[
\begin{aligned}
\||k|^{1/(q+2)}(u-\mathfrak{m}(u))\|_{q+2}^{q+2}&\lesssim \varrho^{-\frac{(d-1)q}{2}}\left(\sum_j |j|^{q/(q+2)} \|\nabla_xu_j\|_{L^2_x}^{q/(q+2)}  \|
u_j\|_{L^2_x}^{(q+4)/(q+2)}\right)^{\frac{q+2}{2}}.
\end{aligned}
\]
By means of the H\"older's inequality with exponents $\eta=\frac{2q+4}{q}$ and $\gamma=\frac{2q+4}{q+4}$, in conjunction with the conservation
of the mass, we end-up with
\[
\begin{aligned}
\||k|^{1/(q+2)}(u-\mathfrak{m}(u))\|_{q+2}^{q+2}&\lesssim \varrho^{-\frac{(d-1)q}{2}} \left(\left(\sum_j |j|^{2}
\|\nabla_xu_j\|_{L^2_x}^{2}\right)^{\frac{q}{2(q+2)}}\left(\sum_j  \| u_j\|_{L^2_x}^{2}\right)^{\frac{q+4}{2(q+2)}}\right)^{\frac{q+2}{2}}\\
&\lesssim \varrho^{-\frac{(d-1)q}{2}}\|\nabla_{x,y} u\|_2^{q/2}\|u\|_2^{\frac{q+4}{2}}\lesssim \varrho^{-\frac{(d-1)q}{2}}\|\nabla_{x,y}
u\|_2^{q/2}.
\end{aligned}
\]
Then the following virial estimate is  established:
\begin{align} \label{viri-est-rad-1}
V''_{\phi_\varrho}(t) \leq 8 Q(u(t)) +C\varrho^{-2} + C \varrho^{-\frac{(d-1)q}{2}}\|\nabla_{x,y} u\|_2^{q/2}, \quad \forall t\in I_{\max}.
\end{align}
The desired blow-up claim follows then from \eqref{viri-est-rad-1}, \eqref{Q-better-control}, and a convexity arguments.\\

In the focusing-focusing case, i.e. $\mu=-1$, we  are nevertheless unable to simply estimate the contribution from the lower-order term by using its
non-negativity property. Alternatively, one may verbatim repeat the estimate as for the higher order term.  By doing so, we get  in turn
\begin{equation}\label{viri-est-rad-2}
\begin{aligned}
V''_{\phi_\varrho}(t) &\leq 8 Q(u(t)) +C\varrho^{-2}  +C \varrho^{-\frac{(d-1)p}{2}}\|\nabla_{x,y} u\|_2^{p/2}+C
\varrho^{-\frac{(d-1)q}{2}}\|\nabla_{x,y} u\|_2^{q/2} \\
&\leq 8 Q(u(t)) +C\varrho^{-2}  +C \varrho^{-\frac{(d-1)p}{2}}\|\nabla_{x,y} u\|_2^{p/2}, \quad \forall t\in I_{\max}.
\end{aligned}
\end{equation}
Note that $C \varrho^{-\frac{(d-1)q}{2}}\|\nabla_{x,y} u\|_2^{2}$ is absorbed in the contribution in $p$ for $\varrho\gg1$. In both cases we can
conclude with the desired blow up results by taking $\varrho$ sufficiently large.  This completes the proof of Theorem \ref{thm-blowup}.
\end{proof}

\subsection{Proof of Theorem \ref{thm-blowup-rate}}
	In order to prove the blow-up rate results, we are inspired by the scheme introduced by Merle, Rapha\"el, and Szeftel in the context of critical equations, see \cite{MRS}. \medskip

First consider the defocusing-focusing case $\mu=1$. By means of \eqref{Q-df} in conjunction with \eqref{viri-est-rad-1}, we get
	\begin{align*}
	V''_{\phi_\varrho}(t) & \leq 8 Q(u(t)) +C\varrho^{-2} + C \varrho^{-\frac{(d-1)q}{2}}\|\nabla_{x,y} u\|_2^{q/2}\\
	&\leq 4dq E(u(t)) +(8-2dq)\|\nabla_{x,y} u(t)\|^2_2+ C\varrho^{-2} + C\varrho^{-\frac{(d-1)q}{2}} \|\nabla_{x,y} u(t)\|^{q/2}_2, \quad \forall
t\in I_{\max},
	\end{align*}
	where $\phi_\varrho$ is defined in \eqref{phi-R-rad}. By Young's inequality, we have for any $\varepsilon>0$ and any $t\in I_{\max}$:
	\[
	V''_{\phi_\varrho}(t) \leq 4dq E(u(t)) +(8-2dq)\|\nabla_{x,y} u(t)\|^2_2 + C\varrho^{-2} + \varepsilon \|\nabla_{x,y} u(t)\|^2_2 + C
\varepsilon^{-\frac{q}{4-q}} \varrho^{-\frac{2(d-1)q}{4-q}}.
	\]	We select now $\varepsilon$ to be  equal to $-(4-dq)$. Hence the above inequality reduces to
	\[
	V''_{\phi_\varrho}(t) \leq 4dq E(u(t)) +(4-dq)\|\nabla_{x,y} u(t)\|^2_2 + C\varrho^{-2} + C \varrho^{-\frac{2(d-1)q}{4-q}}.
	\]	
 Note that  $1<\frac{(d-1)q}{4-q}$ is always satisfied as we are working in the mass supercritical case. Therefore by the conservation of energy and $1<\frac{(d-1)q}{4-q}$, provided   $\varrho>0$ is taken sufficiently small, we get
	\begin{align} \label{est-blow-rate-1}
	(dq-4) \|\nabla_{x,y} u(t)\|^2_2 + V''_{\phi_\varrho}(t) \leq C \varrho^{-\frac{2(d-1)q}{4-q}}.
	\end{align}
	    Now, consider times $0<t_0<t<T_{\max}$. Integrating \eqref{est-blow-rate-1} twice on $(t_0,t)$ gives
	\begin{align*}
	(dq-4)\int_{t_0}^t \int_{t_0}^s \|\nabla_{x,y} u(\tau)\|^2_2 d\tau ds + V_{\phi_\varrho}(t) &\leq C \varrho^{-\frac{2(d-1)q}{4-q}} (t-t_0)^2
+ (t-t_0) V'_{\phi_\varrho}(t_0) + V_{\phi_\varrho}(t_0) \\
	&\leq  \varrho^{-\frac{2(d-1)q}{4-q}} (t-t_0)^2 + C \varrho(t-t_0) \|\nabla u(t_0)\|_2\\
	&\quad + C\varrho^2,
	\end{align*}
	where we have used the conservation of mass and the estimates below:
	\begin{align*}
	V_{\phi_\varrho}(t_0)&\leq C\varrho^2 \|u(t_0)\|^2_2 \leq C \varrho^2, \\
	 V'_{\phi_\varrho}(t_0)& \leq C \varrho\|\nabla_{x,y} u(t_0)\|_2 \|u(t_0)\|_2 \leq C \varrho \|\nabla_{x,y} u(t_0)\|_2.
	\end{align*}
	  Fubini's Theorem then implies
	  	\[
	\int_{t_0}^t \int_{t_0}^s \|\nabla_{x,y} u(\tau)\|^2_2 d\tau ds = \int_{t_0}^t \left(\int_{\tau}^t ds\right) \|\nabla_{x,y} u(\tau)\|^2_2 d\tau
= \int_{t_0}^t (t-\tau) \|\nabla_{x,y} u(\tau)\|^2_2 d\tau.
	\]
	Recall that $V_{\phi_\varrho}$ is non-negative. Then we get
	\[
	\int_{t_0}^t (t-\tau) \|\nabla_{x,y} u(\tau)\|^2_2 d\tau \leq  \varrho^{-\frac{2(d-1)q}{4-q}}(t-t_0)^2 + C \varrho(t-t_0) \|\nabla_{x,y}
u(t_0)\|_2 + C\varrho^2.
	\]
	In the limit $t\to T_{\max}$, by means of the Young's inequality we obtain
	\[
	\begin{aligned}
	\int_{t_0}^{T_{\max}} (T_{\max}-\tau) \|\nabla_{x,y} u(\tau)\|^2_2 d\tau& \leq  \varrho^{-\frac{2(d-1)q}{4-q}} (T_{\max}-t_0)^2 + C \varrho(T_{\max}-t_0)
\|\nabla_{x,y} u(t_0)\|_2 + C\varrho^2\\
	&\leq \varrho^{-\frac{2(d-1)q}{4-q}} (T_{\max}-t_0)^2 + (T_{\max}-t_0)^2 \|\nabla_{x,y} u(t_0)\|^2_2 + C\varrho^2.
	\end{aligned}
	\]
	Optimizing in $\varrho$ by choosing $ \varrho^{-\frac{2(d-1)q}{4-q}} (T_{\max}-t_0)^2 = \varrho^2$, or equivalently $\varrho =
(T_{\max}-t_0)^{\frac{4-q}{(d-2)q+4}}$, we  deduce
		\begin{equation}\label{estimate-rho-opt}
		\int_{t_0}^{T_{\max}} (T_{\max}-\tau) \|\nabla_{x,y} u(\tau)\|^2_2 d\tau
		\leq C(T_{\max}-t_0)^{\frac{8-2q}{(d-2)q+4}} + (T_{\max}-t_0)^2\|\nabla_{x,y} u(t_0)\|^2_2,
		\end{equation}
for any $0<t_0<T_{\max}$.		By introducing the function
		\begin{equation} \label{5-g}
		g(t):= \int_{t}^{T_{\max}} (T_{\max}-\tau) \|\nabla_{x,y} u(\tau)\|^2_2 d\tau,
		\end{equation}
		from \eqref{estimate-rho-opt} and the Fundamental Theorem of Calculus we  get
		\[
		g(t) \leq C(T_{\max}-t)^{\frac{8-2q}{(d-2)q+4}}  - (T_{\max}-t)g'(t), \quad \forall\, 0<t<T_{\max}
		\]
		which  can be straightforwardly rewritten as
		\[
		\frac{d}{dt}\left( \frac{g(t)}{T_{\max}-t} \right) =\frac{1}{(T_{\max}-t)^2} (g(t) + (T_{\max}-t) g'(t)) \leq C(T_{\max}-t)^{\frac{8-2q}{(d-2)q+4}-2}.
		\]
		Integrating over the interval $(0,t)$ the above inequality gives
		\[
		\frac{g(t)}{T_{\max}-t} \leq \frac{g(0)}{T_{\max}} + \frac{C}{(T_{\max}-t)^{\frac{4-dq}{(d-2)q+4}}} -\frac{C}{(T_{\max})^{\frac{4-dq}{(d-2)q+4}}}
		\]
		which in turn implies that
		\[
		\frac{g(t)}{T_{\max}-t} \leq \frac{C}{(T_{\max}-t)^{\frac{4-dq}{(d-2)q+4}}} \quad \text{ as } \quad t\to T_{\max}^-.
		\]
		Therefore, we have
		\[
	 \int_{t}^{T_{\max}} (T_{\max}-\tau) \|\nabla_{x,y} u(\tau)\|^2_2 d\tau\leq C(T_{\max}-t)^{\frac{2q(d-1)}{(d-2)q+4}} \quad \text{ as } \quad t\to
T_{\max}^-.
		\]
The above estimate can be reformulated as 	
		\begin{align} \label{est-g}
		\frac{1}{T_{\max}-t} \int_t^{T_{\max}} (T_{\max}-\tau) \|\nabla_{x,y} u(\tau)\|^2_2 d\tau \leq \frac{C}{(T_{\max}-t)^{\frac{4-dq}{(d-2)q+4}}}.
		\end{align}
		At this point we consider a sequence $T_n \to T_{\max}^-$, and we note that for any $n$ the function $g$ introduced in \eqref{5-g} is a continuous function on $[T_n,T_{\max}]$ and  differentiable on the interior points
$(T_n,T_{\max})$. Hence,  the mean value theorem gives the  existence of a time $t_n \in (T_n,T_{\max})$ such that the left-hand side of  \eqref{est-g}  satisfies
		\begin{equation*}
		 \frac{1}{T_{\max}-T_n}\int_{T_n}^{T_{\max}} (T_{\max}-\tau)\|\nabla_{x,y} u(\tau)\|^2_2 d\tau=(T_{\max}-t_n)\|\nabla_{x,y} u(t_n)\|^2_2.
		\end{equation*}
		Using \eqref{est-g}, we have
		\[
		  \|\nabla u(t_n)\|_2  \leq \frac{C}{(T_{\max}-t_n)^{\frac{4-q}{(d-2)q+4}}}
		\]
This concludes the proof for the blow-up rate in the case $\mu=1$. \\
	
As for the  focusing-focusing case ($\mu=-1$), using \eqref{Q-ff} instead of \eqref{Q-df}, we see that nothing changes with respect to the
defocusing-focusing case,  except for the coefficient of the homogeneous Sobolev norm term. In this case, we  may simply repeat the same arguments given previously by taking the range of $p$ into account instead of considering the higher order exponent $q$, we omit the repeating details. This completes the desired proof.

\appendix

\section{Large data scattering: Proof of Theorem \ref{thm-scattering}}\label{Sec: scattering}
In this section, we focus on the large data scattering result for \eqref{nls}, i.e. proving Theorem \ref{thm-scattering}. As already pointed out in the introductory section and also in the recent papers \cite{Luo_inter,Luo_MathAnn_2024} by the second author, the proof of Theorem \ref{thm-scattering} is quite different in the cases $d< 5$ and $d\geq 5$, mainly due to the fact that the nonlinearity becomes less regular in high-dimensional spaces.

From an analytical point of view, we may consider \eqref{nls} as a perturbed version of the NLS with a single nonlinearity by an intercritical perturbation, thus the proof of Theorem \ref{thm-scattering} is essentially the same compared to the ones given in \cite{Luo_inter,Luo_MathAnn_2024}. For the sake of completeness, we follow the same lines in \cite{Luo_MathAnn_2024} to present a sketch of the proof of Theorem \ref{thm-scattering} in the case $d\geq 5$, by making use of the modern tool \textit{interaction Morawetz-Dodson-Murphy (IMDM) inequality} which has been recently developed in \cite{DodsonMurphyNonRadial}. We shall also omit the almost identical proofs in most cases and refer to \cite{Luo_MathAnn_2024} for details. Instead, we focus on explaining the ideas for proving the main scattering result.

Finally, we also note that the proof of Theorem \ref{thm-scattering} in the case $d \leq 4$ can be similarly deduced as in the paper \cite{Luo_inter} via the standard \textit{concentration compactness} method. To keep the paper as concise and short as possible, we omit the latter details.

\subsection{Scattering Criterion}\label{app sub scattering}
First notice that since \eqref{nls} possesses an energy-subcritical nature and we work with a problem in the energy space, a solution of \eqref{nls} can always be extended beyond its lifespan as long as its $H^1$-norm remains bounded. In particular, a solution of \eqref{nls} will be a global solution when certain variational assumption is satisfied (see Section \ref{app sec 2} below). The main issue here is that to guarantee a global solution of \eqref{nls} is also scattering, further control of the solution in infinite time becomes necessary. Such motivation leads to the useful scattering criterion Lemma \ref{scattering criterion}. To formulate Lemma \ref{scattering criterion}, some notion of the exotic Strichartz estimates will also be introduced.

\begin{lemma}[Exotic Strichartz estimates on $\R^d\times\T$, \cite{Luo_MathAnn_2024}]\label{indices}
For any $\alpha\in (\frac{4}{d},\frac{4}{d-1})$ there exist $\ba,\br,\bb,\bs\in(2,\infty)$ such that
\begin{gather*}
(\alpha+1)\bs'=\br,\quad(\alpha+1)\bb'=\ba,\quad\alpha/\br<\min\{1,\frac{2}{d}\},\quad
\frac{2}{\ba}+\frac{d}{\br}=\frac{2}{\alpha}.
\end{gather*}
Moreover, for any $\gamma\in\R$ we have the following exotic Strichartz estimate:
\begin{align}
\|\int_{t_0}^t e^{i(t-s)\dxy}F(s)\,ds\|_{L_t^\ba L_x^\br H^\gamma_y(I)}&\lesssim \|F\|_{L_t^{\bb'} L_x^{\bs'} H^\gamma_y(I)}.
\end{align}
When $d\geq 5$, we can additionally assume that there exists some $0<\beta\ll 1$ such that $\br$ can be chosen as an arbitrary number from $(\frac{\alpha(\alpha+1)d}{\alpha+2},\frac{\alpha(\alpha+1)d}{\alpha+2}+\beta)$.
\end{lemma}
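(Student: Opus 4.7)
The plan is to address the two assertions of the lemma separately. For the existence of exponents, I would proceed by direct construction: fix $\br$ in the window where $\alpha/\br < \min\{1, 2/d\}$ and define $\ba$ via the scaling relation $\frac{2}{\ba} = \frac{2}{\alpha} - \frac{d}{\br}$; the upper bound on $\alpha/\br$ ensures $\ba > 0$, while the remaining exponents are then forced by $\bs' = \br/(\alpha+1)$ and $\bb' = \ba/(\alpha+1)$. A routine check shows that the intercritical assumption $\alpha \in (\frac{4}{d}, \frac{4}{d-1})$ renders this window non-empty and that all four exponents lie in $(2, \infty)$. For the refined window when $d \geq 5$, a direct substitution at the reference point $\br = \alpha(\alpha+1)d/(\alpha+2)$ yields $\ba = 2(\alpha+1)$ and $\bb = 2$, i.e.\ the $L^2$-Strichartz endpoint; one then perturbs $\br$ slightly upward so that $\bb > 2$, which is precisely where the small parameter $\beta$ enters.

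For the Strichartz estimate itself, the key structural observation is that $e^{it\dxy} = e^{it\Delta_x}e^{it\Delta_y}$ and that $e^{it\Delta_y}$ is a unitary group on $H^\gamma_y$ for every $\gamma \in \R$. Consequently the target norm $L^{\ba}_t L^{\br}_x H^\gamma_y$ may be viewed as a Bochner-type norm with values in the Hilbert space $H^\gamma_y$, and one can invoke the Keel-Tao abstract framework applied to $H^\gamma_y$-valued functions, since the underlying $TT^*$ argument depends only on the $L^1_x \to L^\infty_x$ dispersive decay of $e^{it\Delta_x}$, which tensorizes trivially. This reduces matters to an inhomogeneous Strichartz estimate on $\R^d$ at the $\dot H^{s_c}$-critical level with $s_c = d/2 - 2/\alpha$, which is precisely the content of the scaling identity $\frac{2}{\ba} + \frac{d}{\br} = \frac{2}{\alpha}$.

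The main obstacle I anticipate is that the pair $(\bb', \bs')$ is not dual-admissible to $(\ba, \br)$ in the classical Keel-Tao sense, so a direct $TT^*$ composition does not close the argument. Instead I would appeal to a Foschi-Vilela style extension of the inhomogeneous Strichartz estimate to pairs satisfying only the overall scaling balance rather than individual admissibility; the integrability condition $\alpha/\br < \min\{1, 2/d\}$ is exactly what places the chosen pair inside the acceptance region of such an extension. Once the Euclidean input is available in this generalized form, transplanting it to $\R^d \times \T$ via the Hilbert-valued framework above requires no new dispersive ingredient, and the stated estimate follows.
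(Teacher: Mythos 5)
The paper does not prove this lemma; it quotes it verbatim from \cite{Luo_MathAnn_2024}, so there is no in-paper proof to measure your sketch against. Evaluating the sketch on its own merits: the overall route is sound and is the standard one in the waveguide literature, but a few details deserve attention.

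On the exponent bookkeeping, the role you attribute to $\alpha/\br<\min\{1,2/d\}$ is slightly off. Given the scaling relation $\frac{2}{\ba}+\frac{d}{\br}=\frac{2}{\alpha}$, the inequality $\alpha/\br<2/d$ is literally equivalent to $\frac{2}{\ba}>0$, i.e.\ finiteness of $\ba$ (which you also note at the outset); it is not by itself the Foschi--Vilela acceptability condition. Those acceptability inequalities, together with the requirement that all four exponents land in $(2,\infty)$ (in particular $\bs<\infty$ forces $\br>\alpha+1$, which is not implied by $\alpha/\br<1$ alone), need to be checked separately — the window is non-empty but this is precisely the "routine check" you deferred. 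Also, calling $\br=\alpha(\alpha+1)d/(\alpha+2)$ the ``$L^2$-Strichartz endpoint'' is a misnomer: one does get $\ba=2(\alpha+1)$ and $\bb=2$ there, but $\bs\neq\frac{2d}{d-2}$ unless $\alpha=4/d$, so $(\bb,\bs)$ is not the Keel--Tao endpoint pair. What actually matters — and you have this right — is only that $\bb=2$ violates $\bb\in(2,\infty)$, and pushing $\br$ slightly upward cures it.

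On the transference to $\R^d\times\T$, your observations that $e^{it\partial_y^2}$ is unitary on $H^\gamma_y$ and that the $L^1_x\to L^\infty_x$ dispersive bound for $e^{it\Delta_x}$ tensorizes are both correct and are exactly the dispersive ingredients one needs. The Hilbert-valued Keel--Tao/Foschi packaging is legitimate, but it is arguably cleaner, and closer to how these estimates are usually obtained on waveguides, to expand in Fourier modes in $y$, factor out the unimodular phase $e^{-i(t-s)k^2}$ from each mode, apply the scalar Euclidean exotic inhomogeneous estimate mode-by-mode, and reassemble with two Minkowski inequalities — one valid because $\ba,\br\geq2$, and one in the reverse direction because $\bb',\bs'\leq 2$. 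This avoids having to re-justify the abstract $TT^*$/Christ--Kiselev machinery in a vector-valued setting. Either way the statement follows; your sketch has the right ideas, with the caveats above.
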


\begin{lemma}[Scattering criterion, \cite{Luo_MathAnn_2024}]\label{scattering criterion}
Let $u$ be a global solution of \eqref{nls} and assume that
$$\|u\|_{L_t^\infty H_{x,y}^1(\R)}\leq A.$$
Then for any $\sigma>0$ there exist
$\vare=\vare(\sigma,A)$ sufficiently small and $T_0=T_0(\sigma,\vare,A)$ sufficiently large such that if for all $a\in\R$ there exists
$T\in(a,a+T_0)$ such that $[T-\vare^{-\sigma},T]\subset(a,a+T_0)$ and
\begin{align}
\|u\|_{L_t^{\ba_q} L_x^{\br_q} H_y^s(T-\vare^{-\sigma},T)} \lesssim
\vare^\mu\label{4.1}
\end{align}
for some $\mu>0$, where $(\ba_q,\br_q)$ is the exotic-admissible pair in Lemma \ref{indices} corresponding to the exponent $q$, then $u$ scatters forward in time.
\end{lemma}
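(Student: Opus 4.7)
The plan is to combine Duhamel's formula with the exotic Strichartz estimates of Lemma \ref{indices} and propagate smallness from the ``good window'' $[T-\vare^{-\sigma},T]$ forward in time by a continuity/bootstrap argument. Denote the nonlinearity by $N(u):=\mu|u|^p u-|u|^q u$; since $u$ is uniformly bounded in $H^1_{x,y}$ by $A$, all implicit constants below may depend on $A$.

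Fix $\sigma>0$ small and let $(\ba_q,\br_q)$ and $(\ba_p,\br_p)$ be the exotic-admissible pairs of Lemma \ref{indices} associated to the exponents $q$ and $p$. First I would use the smallness hypothesis \eqref{4.1} on $J:=(T-\vare^{-\sigma},T)$, interpolated with the a priori $H^1_{x,y}$-bound, to deduce analogous smallness of the $p$-exponent norm $\|u\|_{L_t^{\ba_p}L_x^{\br_p}H^s_y(J)}$. The dual exotic Strichartz estimate of Lemma \ref{indices} applied to $N(u)$ then yields, for some $\mu'>0$,
\begin{align*}
\Bigl\|\int_{T-\vare^{-\sigma}}^{\,t} e^{i(t-\tau)\Delta_{x,y}}N(u(\tau))\,d\tau\Bigr\|_{\widetilde S(J)}\lesssim \vare^{\mu'},
\end{align*}
where $\widetilde S$ denotes a standard (non-exotic) Strichartz space at the $H^1_{x,y}$-level suitable for scattering.

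Next, I would propagate this smallness to all of $(T,\infty)$. For $t>T$ I write
\begin{align*}
u(t)=e^{i(t-(T-\vare^{-\sigma}))\Delta_{x,y}}u(T-\vare^{-\sigma})+i\int_{T-\vare^{-\sigma}}^{t}e^{i(t-\tau)\Delta_{x,y}}N(u(\tau))\,d\tau
\end{align*}
and split the free term by a Littlewood--Paley cutoff at frequency $\vare^{-\alpha}$ with $0<\alpha\ll 1$: the high-frequency part is small in $\widetilde S$ by Bernstein's inequality and the uniform $H^1_{x,y}$-bound; the low-frequency part benefits from the $\R^d$-dispersive decay $|t-\tau|^{-d/2}$ tensored with the unitary torus propagator, acting over the time gap $t-(T-\vare^{-\sigma})\gtrsim\vare^{-\sigma}$, which is made large by choosing $T_0$ large. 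Combined with the Duhamel bound above, this produces $\|u\|_{\widetilde S(T,T+\tau_0)}\ll 1$ for some $\tau_0=\tau_0(A)>0$. The hypothesis of the lemma guarantees such a good window inside every interval of length $T_0$, so iterating this argument yields $\|u\|_{\widetilde S([0,\infty))}<\infty$.

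The main obstacle will be closing the bootstrap with both nonlinearities present and while retaining the fractional $H^s_y$-derivatives demanded by the waveguide geometry. The order of parameter choices is crucial: first $\sigma$ is chosen small compared to $\mu$ so that the window length $\vare^{-\sigma}$ does not overwhelm the small factor $\vare^{\mu}$; then $\vare=\vare(\sigma,A)$ is taken small enough for the Strichartz and Bernstein bounds to close; and finally $T_0=T_0(\sigma,\vare,A)$ is taken large enough for the dispersive decay to dominate the low-frequency residue. Once $\|u\|_{\widetilde S([0,\infty))}<\infty$ is in hand, a standard Cauchy-in-$H^1_{x,y}$ argument applied to $e^{-it\Delta_{x,y}}u(t)$, together with Strichartz and dominated convergence, produces a $\phi^+\in H^1(\R^d\times\T)$ with $\|u(t)-e^{it\Delta_{x,y}}\phi^+\|_{H^1_{x,y}}\to 0$ as $t\to\infty$, completing the scattering claim.
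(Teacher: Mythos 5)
Your overall framework (Duhamel, exotic Strichartz of Lemma \ref{indices}, propagation of smallness from the good window, and the closing Cauchy-in-$H^1_{x,y}$ argument via $e^{-it\Delta_{x,y}}u(t)$) matches the intended proof, which the paper simply attributes to \cite[Lem.~6]{Luo_MathAnn_2024}. Your treatment of the combined nonlinearity, interpolating the $p$-exponent exotic norm against the $q$-exponent one using the a priori $H^1_{x,y}$-bound, is exactly the modification the paper mentions. The order of parameter choices ($\sigma$, then $\vare$, then $T_0$) is also correct.

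However, the key technical step in the argument, namely the treatment of the free term after starting Duhamel at $T-\vare^{-\sigma}$, is where your proposal has a genuine gap. You propose splitting $e^{i(t-(T-\vare^{-\sigma}))\Delta_{x,y}}u(T-\vare^{-\sigma})$ by a Littlewood--Paley cutoff and controlling the low-frequency piece by $\R^d$-dispersive decay $|t-\tau|^{-d/2}$. This does not close: the $L^1_x\to L^\infty_x$ dispersive estimate requires $L^1_x$-type control of the data, but $u(T-\vare^{-\sigma})$ is only bounded in $H^1_{x,y}$, and frequency truncation does not manufacture $L^1_x$ from $L^2_x$ on the non-compact factor $\R^d$. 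The $L^2$-based Strichartz, on the other hand, gives no decay in the time gap $t-(T-\vare^{-\sigma})$. The mechanism actually used in \cite[Lem.~6]{Luo_MathAnn_2024} (following Dodson--Murphy) is a further Duhamel extension past the good window: one writes $e^{i(t-(T-\vare^{-\sigma}))\Delta_{x,y}}u(T-\vare^{-\sigma})=e^{i(t-a)\Delta_{x,y}}u(a)+i\int_a^{T-\vare^{-\sigma}}e^{i(t-s)\Delta_{x,y}}N(u(s))\,ds$. The first piece is a genuine free evolution of the (fixed) data at time $a$ and its tail Strichartz norm over the distant window is driven small by absolute continuity/monotone convergence once $T_0$ is large. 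The second piece has a time gap $t-s\geq\vare^{-\sigma}$ and, crucially, $N(u(s))\in L^1_x L^2_y$ (or a nearby Lebesgue space) by H\"older applied to the polynomial nonlinearity and the $H^1_{x,y}$-bound, so the $L^1_x\to L^\infty_x$ dispersive estimate tensored with the $y$-propagator applies. Without this Duhamel extension, dispersive decay on the free term is not available; the frequency-cutoff substitution you propose does not supply the missing $L^1_x$ structure. You would also want to record that any Bernstein gain on the high-frequency piece lives in the gap between the scattering regularity $s\in(\frac12,1-s_q)$ and the conserved $H^1_{x,y}$-regularity, which your sketch leaves implicit.
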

\begin{proof}
This follows immediately from the proof of \cite[Lem. 6]{Luo_MathAnn_2024} by dealing the combined nonlinearities separately and using interpolation to bound the estimates for the nonlinearity of order $p$ by the ones of the larger order $q$.
\end{proof}

The scattering criterion will be applied in conjunction with the following useful local control result. The latter can be similarly deduced by using the proof of \cite[Lem. 5]{Luo_MathAnn_2024}, where again we only need to cheat the both nonlinearities separately and use suitable interpolation inequalities.

\begin{lemma}[Local control of a solution, \cite{Luo_MathAnn_2024}]\label{local control}
Let $u$ be a global solution of \eqref{nls} with $\|u\|_{L_t^\infty H_{x,y}^1(\R)}<\infty$ and let $s\in(\frac{1}{2},1-s_q)$, where
$s_q=\frac{d}{2}-\frac{2}{q}$. Then for any $L_x^2$-admissible pair $(\ell_1,\ell_2)$ (namely $(\ell_1,\ell_2)$ satisfies $\frac{2}{\ell_1}+\frac{d}{\ell_2}=\frac{d}{2}$) with $(\ell_1,\ell_2,d)\neq (2,\infty,2)$ we have
\begin{align}
\|u\|_{L_t^{\ell_1} W_x^{1-s,\ell_2}H_y^s(I)}\lesssim \la I\ra^{\frac{1}{\ell_1}}.
\end{align}
\end{lemma}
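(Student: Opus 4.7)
The plan is to adapt the local-control argument of \cite[Lem.~5]{Luo_MathAnn_2024} by splitting the time interval into short pieces and handling the two nonlinearities $\mu|u|^pu$ and $-|u|^qu$ separately.

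First, I would partition $I$ into $\sim\la I\ra/\tau$ consecutive subintervals $I_j=[t_j,t_{j+1}]$ of length at most $\tau$, where $\tau=\tau(A)>0$ will be chosen below. On each $I_j$, the Duhamel representation
\[
u(t)=e^{i(t-t_j)\dxy}u(t_j)+i\int_{t_j}^{t}e^{i(t-\sigma)\dxy}\bigl(\mu|u|^pu-|u|^qu\bigr)(\sigma)\,d\sigma
\]
combined with the mixed-norm Strichartz estimate on $\R^d\times\T$ for the $L^2_x$-admissible pair $(\ell_1,\ell_2)$, lifted to $W_x^{1-s,\ell_2}H_y^s$ in the standard way, yields
\[
\|u\|_{L_t^{\ell_1}W_x^{1-s,\ell_2}H_y^s(I_j)}\lesssim \|u(t_j)\|_{H^1_{x,y}}+\sum_{\alpha\in\{p,q\}}\bigl\||u|^\alpha u\bigr\|_{N(I_j)},
\]
where $N(I_j)$ denotes an appropriate dual exotic Strichartz space carrying $W_x^{1-s}H_y^s$ regularity.

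Next I would bound each nonlinear piece by a Christ--Weinstein-type fractional chain rule in the $x$-variable coupled with algebra and commutator estimates in $H^s_y$. The constraint $s>\tfrac12$ makes $H^s_y(\T)$ a Banach algebra so that powers of $u$ remain controlled in the periodic direction, whereas $s<1-s_q$ together with the intercriticality $q<\tfrac{4}{d-1}$ ensures that the dual exponents produced by the chain rule are compatible with the exotic-Strichartz scaling of Lemma~\ref{indices}. Interpolating between the uniform bound $\|u\|_{L_t^\infty H^1_{x,y}}\leq A$ and the local Strichartz norm on $I_j$ extracts a factor $\tau^\theta$ with $\theta>0$ for the power $q$; the lower power $p$ is then handled by interpolating in $L^{\ell_1}_t$ between the $q$-estimate and the conservation-of-mass bound, exploiting $p<q$ in the intercritical range. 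Choosing $\tau=\tau(A)$ sufficiently small allows one to absorb the nonlinear contribution into the left-hand side, so that $\|u\|_{L_t^{\ell_1}W_x^{1-s,\ell_2}H_y^s(I_j)}\lesssim A$ uniformly in $j$.

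Finally, summing the $\ell_1$-th powers over the $\sim\la I\ra/\tau$ subintervals and taking the $\ell_1$-th root produces the claimed bound $\lesssim_A\la I\ra^{1/\ell_1}$. The main technical obstacle is the simultaneous fractional-regularity estimate for $|u|^\alpha u$ in the mixed norm $L_x^{\ell_2}H_y^s$: the smoothing along $\T$ must be harvested without destroying the scaling gain in $x$, and this is precisely the point where the restriction $s\in(\tfrac12,1-s_q)$ must be balanced against the exotic pair of Lemma~\ref{indices}, while ensuring that both nonlinear powers close under the \emph{same} choice of exponents via $p<q$.
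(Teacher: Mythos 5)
Your sketch follows essentially the same route the paper intends: the paper itself gives no proof of this lemma, deferring to \cite[Lem.~5]{Luo_MathAnn_2024} with the one-line remark that one treats the two nonlinearities separately and uses interpolation to control the lower power $p$ by the higher power $q$, which is precisely the strategy you lay out (partition into length-$\tau$ subintervals, Duhamel plus Strichartz, $H^s_y$-algebra for $s>\tfrac12$ combined with a fractional chain rule in $x$, absorption for $\tau=\tau(A)$ small, then sum $\sim\la I\ra/\tau$ contributions to produce $\la I\ra^{1/\ell_1}$). One minor point of hygiene: for the $L^2_x$-admissible target $(\ell_1,\ell_2)$ the natural dual Strichartz norm for the Duhamel term is a dual \emph{admissible} space rather than a dual \emph{exotic} one in the sense of Lemma~\ref{indices} (the exotic pairs enter in the scattering criterion, not here), but this does not affect the soundness of your argument.
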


\subsection{Variational analysis and the IMDM-estimates}\label{app sec 2}
As mentioned in Subsection \ref{app sub scattering}, we will need to establish the uniform $H^1$-boundedness for the solution of \eqref{nls} by appealing to suitable variational arguments that can be deduced nowadays in a quite standard way by using the functional inequalities such as the Gagliardo-Nirenberg or Sobolev inequalities. In the context of the waveguide setting, suitable scale-invariant (w.r.t. the $x$-direction) replacement of such functional inequalities becomes necessary, see e.g. Lemma \ref{lemma gn additive}. Moreover, to fit the non-local nature of the Morawetz inequalities of interaction type, additional spatial translation shall also be taken into account in the functional inequalities, see \cite[Lem. 2.1]{DodsonMurphyNonRadial}. All the consideration leads to the following coercivity result. For a proof, see e.g. \cite{Luo_MathAnn_2024,InteractionCombined}.

\begin{lemma}[Coercivity property, \cite{Luo_MathAnn_2024,InteractionCombined}]\label{lem coercive}
Let $u$ be a solution of \eqref{nls} with $u(0)\in\mathcal{A}$, where
\begin{align*}
\left\{
\begin{array}{ll}
\mathcal{A}:=\{u\in S(c):\mH(u)<m_c,\,\mK(u)>0\},&\text{when $\mu=-1$},\\
\mathcal{A}:=\{u\in H_{x,y}^1\setminus \{0\}:\mS_\omega(u)<\gamma_\omega,\,\mK(u)>0\},&\text{when $\mu=1$}.
\end{array}
\right.
\end{align*}
Then $u$ is global and $u(t)\in\mathcal{A}$ for all $t\in\R$. Moreover, there exist $0<\delta\ll 1$ and $R_0\gg 1$ such that for all $R\geq R_0$, $z\in \R^d$, $t\in\R$ we have
\begin{align}
\mK(\chi_R(\cdot-z)u^\xi(t))\geq \delta\|\nabla_x(\chi_R(\cdot-z))u^\xi(t)\|^2_{2},\label{5.2}
\end{align}
where $u^\xi(t,x,y):=e^{ix\cdot\xi}u(t,x,y)$ and
\begin{equation*}
\xi=\xi(t,z,R)=\left\{
\begin{array}{rr}
             -\frac{\int \mathrm{Im}(\chi_R^2(x-z)\bar{u}(t,x,y)\nabla_x u(t,x,y))\,dxdy}{\int \chi_R^2(x-z)|u(t,x,y)|^2\,dxdy}, &
             \text{if }\int \chi_R^2(x-z)|u(t,x,y)|^2\,dxdy\neq 0,\\
             0, &\text{if }\int \chi_R^2(x-z)|u(t,x,y)|^2\,dxdy=0.
             \end{array}
\right.
\end{equation*}
\end{lemma}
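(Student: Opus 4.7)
The plan is to proceed in two main stages: first, establish that $\mathcal{A}$ is invariant under the flow and deduce global existence; then, derive the quantitative localized coercivity \eqref{5.2} by a contradiction/compactness argument. For the invariance, both $M(u(t))$ and $\mH(u(t))$ are conserved (respectively $\mS_\omega(u(t))$ when $\mu=1$), so the sub-threshold condition persists in time, and a continuity argument rules out $\mK(u(t))$ changing sign: if $\mK(u(t_0))=0$ at some $t_0$ in the maximal existence interval, then $u(t_0)\in V(M(u_0))$ would force $\mH(u(t_0))\geq m_{M(u_0)}$, contradicting $\mH(u_0)<m_{M(u_0)}$; the case $\mu=1$ uses the variational characterization \eqref{1.31} in the same fashion. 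With $\mK(u(t))>0$ secured, the identity $\mI(u)=\mH(u)-\tfrac{2}{pd}\mK(u)$ together with the positivity of every coefficient appearing in \eqref{def of mI} (recall $q>p>4/d$) yields a uniform $H^1_{x,y}$-bound on $u(t)$, and standard $H^1$-subcritical local well-posedness for \eqref{nls} extends $u$ to a global solution.

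For the coercivity, the key feature of $\xi=\xi(t,z,R)$ is that it is the unique minimizer of $\eta\mapsto\|\nabla_x(\chi_R(\cdot-z)e^{i\eta\cdot x}u(t))\|_2^2$, or equivalently the unique choice killing the local momentum of $v:=\chi_R(\cdot-z)u^\xi(t)$ in the window. A direct computation using $\nabla_x u^\xi=i\xi u^\xi+e^{ix\cdot\xi}\nabla_x u$ together with an integration by parts yields
\[
\|\nabla_x v\|_2^2\leq\int\chi_R^2(x-z)|\nabla_x u(t)|^2\,dxdy+CR^{-2}M(u_0),
\]
with no positive Galilean boost contribution. I would then argue by contradiction: if \eqref{5.2} fails, there exist sequences $R_n\geq R_0$, $z_n\in\R^d$ and $t_n\in\R$ with $\mK(v_n)<n^{-1}\|\nabla_x v_n\|_2^2$ for $v_n:=\chi_{R_n}(\cdot-z_n)u^{\xi_n}(t_n)$. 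If $\|\nabla_x v_n\|_2\to 0$, Lemma \ref{lemma gn additive} combined with the uniform $H^1$-bound forces $\|v_n\|_{p+2}^{p+2}+\|v_n\|_{q+2}^{q+2}$ to vanish faster than $\|\nabla_x v_n\|_2^2$, so in fact $\mK(v_n)\geq\delta\|\nabla_x v_n\|_2^2$ holds for all large $n$ and any small $\delta<1$, a contradiction. Otherwise $\|\nabla_x v_n\|_2\gtrsim 1$ and $\mK(v_n)\to 0$; Lemma \ref{monotoneproperty} then produces $t_n^*\to 1$ with $\mK(v_n^{t_n^*})=0$, so that $m_{M(v_n)}\leq\mH(v_n^{t_n^*})=\mH(v_n)+o_n(1)$, while $M(v_n)\leq M(u_0)$ and the monotonicity of $c\mapsto m_c$ (Lemma \ref{monotone lemma}) give $m_{M(v_n)}\geq m_{M(u_0)}$.

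The main obstacle lies in closing the contradiction by showing $\mH(v_n)\leq\mH(u_0)+o_n(1)$ uniformly in the translation parameter $z_n$. The displayed bound on $\|\nabla_x v_n\|_2^2$, the elementary inequality $\|\partial_y v_n\|_2^2\leq\|\partial_y u(t_n)\|_2^2$, and the pointwise bound $|v_n|\leq|u(t_n)|$ reduce the comparison $\mH(v_n)-\mH(u(t_n))$ to localization errors of the form $CR_n^{-2}M(u_0)+\int(1-\chi_{R_n}^2(x-z_n))(|\nabla_{x,y}u(t_n)|^2+|u(t_n)|^{p+2}+|u(t_n)|^{q+2})\,dxdy$; crucially, the potentially bad cross-term $2\xi_n\cdot\mathrm{Im}\int\chi_{R_n}^2\bar{u}(t_n)\nabla_x u(t_n)$ is annihilated by the very definition of $\xi_n$. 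For $z_n$ staying in a bounded region where $u(t_n)$ concentrates, the localization error tends to zero as $R_n\to\infty$; for $z_n\to\infty$ the function $v_n$ becomes small and one is back in the trivial case. Combining the subcases with the conservation $\mH(u(t_n))=\mH(u_0)<m_{M(u_0)}$ yields the desired contradiction. The case $\mu=1$ is handled identically, replacing $\mH$, $m_c$ and Lemma \ref{monotone lemma} by $\mS_\omega$, $\gamma_\omega$ and the monotonicity of $\omega\mapsto\gamma_\omega$, and using conservation of the action energy. Full details of the Galilean cancellation and the uniform-in-$z_n$ reduction can be modeled on those in \cite{Luo_MathAnn_2024, InteractionCombined}.
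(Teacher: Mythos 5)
The paper itself does not prove this lemma but cites \cite{Luo_MathAnn_2024,InteractionCombined} for a proof, so the comparison must be against the scheme used there. Your global-existence argument and the reduction to a localized coercivity estimate via the contradiction sequences $(R_n,z_n,t_n)$ are consistent with that scheme, as is the treatment of the degenerate case $\|\nabla_x v_n\|_2\to 0$ via Lemma \ref{lemma gn additive}. However, the key nondegenerate step contains a genuine gap.

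You claim that the pointwise bound $|v_n|\leq|u(t_n)|$ together with $\|\partial_y v_n\|_2\leq\|\partial_y u(t_n)\|_2$ and the Galilean cancellation reduce $\mH(v_n)-\mH(u(t_n))$ to localization errors that vanish. This has the wrong sign. Since the focusing term appears in $\mH$ with a negative prefactor, $\|v_n\|_{q+2}^{q+2}\leq\|u(t_n)\|_{q+2}^{q+2}$ (and similarly the $p$-term when $\mu=-1$) makes the cut-off \emph{increase} the potential energy, so $\mH(v_n)$ can exceed $\mH(u(t_n))$ by an amount comparable to $\int(1-\chi_{R_n}^{q+2})|u(t_n)|^{q+2}$, which is not small uniformly in $z_n$. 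Your attempted dichotomy ``$z_n$ bounded where $u(t_n)$ concentrates'' versus ``$z_n\to\infty$'' does not close this, because the solution need not enjoy any uniform-in-time concentration over $t\in\R$ (you are using no compactness of the orbit), and the statement must hold for \emph{every} $z,R\geq R_0,t$. So the inequality $m_{M(v_n)}\leq\mH(v_n)\leq\mH(u_0)+o_n(1)$ cannot be established along these lines. A secondary issue: you pass from $\mK(v_n)<n^{-1}\|\nabla_x v_n\|_2^2$ to ``$\mK(v_n)\to 0$'' and $t_n^*\to 1$, but the hypothesis only bounds $\mK(v_n)$ from above; if $\mK(v_n)$ stays negative and bounded away from zero, $t_n^*$ does not converge to $1$ and the estimate $\mH(v_n^{t_n^*})=\mH(v_n)+o_n(1)$ fails.

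The correct route, and the one used in the cited works, replaces $\mH$ by the coercive functional $\mI(u)=\mH(u)-\tfrac{2}{pd}\mK(u)$ of \eqref{def of mI}, which is a sum of nonnegative terms, each of which decreases under the localization $u\mapsto\chi_R(\cdot-z)u^\xi$ (the $\|\partial_y\cdot\|_2^2$ and $\|\cdot\|_{q+2}^{q+2}$ terms decrease pointwise; the $\|\nabla_x\cdot\|_2^2$ term increases by at most $CR^{-2}M(u_0)$ once the Galilean phase $\xi$ is chosen to kill the local momentum, exactly as in your computation). Thus $\mI(\chi_R u^\xi(t))\leq\mI(u(t))+CR^{-2}$. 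Since $u(t)\in\mathcal{A}$ gives $\mK(u(t))>0$ and $\mH(u(t))=\mH(u_0)<m_{M(u_0)}$, one has $\mI(u(t))<m_{M(u_0)}$. On the other hand, the alternative characterization $m_c=\tilde m_c=\inf\{\mI(w):w\in S(c),\mK(w)\leq 0\}$ of Lemma \ref{3.7lem} together with the monotonicity $M(\chi_R u^\xi)\leq M(u_0)\Rightarrow m_{M(\chi_R u^\xi)}\geq m_{M(u_0)}$ (Lemma \ref{monotone lemma}) shows that whenever $\mK(\chi_R u^\xi)\leq 0$ one must have $\mI(\chi_R u^\xi)\geq m_{M(u_0)}$, a contradiction for $R$ large. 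Quantifying this using the strict gap $m_{M(u_0)}-\mH(u_0)>0$ yields \eqref{5.2} with a uniform $\delta$ (and the same argument with $\mS_\omega$, $\gamma_\omega$, and the corresponding $\tilde\gamma_\omega$ handles $\mu=1$). This is structurally different from what you propose: no energy comparison of the type $\mH(v_n)\lesssim\mH(u(t_n))$ is needed, and no assumption about where $u(t_n)$ concentrates is made.
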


Having established the previous coercivity property of the solution $u$ of \eqref{nls}, we may immediately deduce the following IMDM-inequality by using the interaction Morawetz potential initiated by Dodson and Murphy \cite{Dodson4dmassfocusing,DodsonMurphyNonRadial} which plays a crucial role in the rest of the paper. In the waveguide setting, we shall simply apply the interaction Morawetz inequality along the $x$-direction and integrate the quantities over the $y$-direction without taking other operations. This can also be seen from the observation that the NLS only possesses dispersive effects in the infinite Euclidean space.

\begin{lemma}[IMDM-inequality, \cite{DodsonMurphyNonRadial,Luo_MathAnn_2024}]\label{lem inter morawetz}
Let $u$ be a global solution of \eqref{nls} satisfying the assumptions in Lemma \ref{lem coercive}. Then for any $\vare>0$ there exist $T_0=T_0(\vare)\gg 1$, $J=J(\vare)\gg 1$, $R_0=R_0(\vare,u_0)\gg 1$ and $\eta=\eta(\vare)\ll 1$ such that for any
$a\in\R$ we have
\begin{equation}\label{5.12}
\begin{aligned}
&\,\frac{1}{JT_0}\int_a^{a+T_0}\int_{R_0}^{R_0e^J}\frac{1}{R^d}\int_{(\R_{x_a}^d\times\T_{y_a})\times (\R_{x_b}^d\times\T_{y_b})\times\R^d_z}
\\
&\qquad |\chi_R(x_b-z)u(t,x_b,y_b)|^2|\nabla_x(\chi_R(x_a-z)u^\xi(t,x_a,y_a))|^2d(x_a,y_a)d(x_b,y_b)dz\frac{dR}{R}dt\lesssim \vare.
\end{aligned}
\end{equation}
\end{lemma}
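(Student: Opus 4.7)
The plan is to port the truncated interaction Morawetz--Dodson--Murphy scheme used in \cite{DodsonMurphyNonRadial, Luo_MathAnn_2024} to the waveguide setting, running the Morawetz machinery only along the Euclidean factor $x\in\R^d$ while treating $y\in\T$ as an auxiliary parameter, and extracting the relevant coercive term through Lemma \ref{lem coercive}. For $R>0$, let $\chi_R$ be a fixed smooth bump adapted to the ball $\{|x|\leq R\}$. I would introduce the $z$-averaged, Galilean-boosted Morawetz action
\begin{align*}
\mathcal{M}_R(t):= 2\int_{\R^d_z}\!\!\int\!\!\int &\chi_R^2(x_b-z)|u(t,x_b,y_b)|^2\chi_R(x_a-z)\nabla_x\chi_R(x_a-z)\\
& \cdot \ima\bigl(\bar u\nabla_x u\bigr)(t,x_a,y_a)\,d(x_a,y_a)\,d(x_b,y_b)\,dz,
\end{align*}
with $\xi=\xi(t,z,R)$ supplied by Lemma \ref{lem coercive}. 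The uniform bound $\|u(t)\|_{H^1_{x,y}}\lesssim 1$ coming from Lemma \ref{lem coercive}, combined with $|\nabla_x\chi_R|\lesssim R^{-1}$ on a shell of volume $\approx R^d$ and the $z$-integration, immediately yields the a priori estimate $\sup_t|\mathcal{M}_R(t)|\lesssim R^d$.

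Differentiating along \eqref{nls}, integrating by parts in $x_a$, and completing the square with the momentum $\xi$, the time derivative splits schematically into
\begin{align*}
\frac{d}{dt}\mathcal{M}_R(t)=\frac{c}{R^d}\!\int_{\R^d_z}\!\!\int\!\!\int|\chi_R u|^2|\nabla_x(\chi_R u^\xi)|^2+\int_{\R^d_z}\mathcal{T}_R(t,z)\,dz-\mathrm{Err}_R(t),
\end{align*}
where $\mathcal{T}_R$ collects the $\chi_R$-localised contributions of $|u|^{p+2}$ and $|u|^{q+2}$. By design the first two terms reassemble into the semivirial $\mathcal K$ evaluated at $\chi_R(\cdot-z)u^\xi$, so \eqref{5.2} forces their sum to be nonnegative and bounded below by a positive multiple of $\int_{\R^d_z}\|\nabla_x(\chi_R(\cdot-z)u^\xi)\|_2^2\,dz$ whenever $R\geq R_0$. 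The remainder $\mathrm{Err}_R(t)$ gathers commutator terms in which at least one derivative lands on $\chi_R$, hence is supported on the annulus $\{R\leq |x_a-z|\leq 2R\}$. Integrating in $t\in[a,a+T_0]$ and combining the fundamental theorem of calculus with $|\mathcal{M}_R|\lesssim R^d$ yields
\begin{align*}
\int_a^{a+T_0}\!\!\frac{1}{R^d}\!\int_{\R^d_z}\!\!\int\!\!\int|\chi_R u|^2|\nabla_x(\chi_R u^\xi)|^2\,dt\lesssim 1+\int_a^{a+T_0}|\mathrm{Err}_R(t)|\,dt.
\end{align*}

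The main obstacle, and the reason for the simultaneous logarithmic average on $[R_0,R_0 e^J]$, is the control of $\mathrm{Err}_R$. Since any point $x\in\R^d$ belongs to only $O(1)$ of the dyadic shells $\{R<|x-z|<2R\}$ as $R$ varies, Fubini combined with the $dR/R$-average produces an extra factor $1/J$; to handle the $y$-dependence inside the error I would, following \cite{Luo_MathAnn_2024}, decompose $u=\mathfrak{m}(u)+(u-\mathfrak{m}(u))$ as in the proof of Theorem \ref{thm-blowup}, bound the oscillating part using Bernstein in $y$ together with Sobolev embedding in $x$, and absorb the $p$-power into the $q$-power through interpolation so that \eqref{5.2} dominates both potentials at once. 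After dividing by $JT_0$ and averaging in $R$, the upper bound reduces to $\tfrac{1}{T_0}+\tfrac{1}{J}$ up to universal constants. Given $\vare>0$, one first picks $J=J(\vare)\gg 1$ to kill the error contribution, then $T_0=T_0(\vare)\gg 1$ to kill the boundary contribution, and finally $R_0=R_0(\vare,u_0)$ large enough to meet the coercivity threshold in Lemma \ref{lem coercive}; the auxiliary $\eta(\vare)$ in the statement merely records the resulting overall smallness.
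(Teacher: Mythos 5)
The paper does not prove this lemma; it is cited as a known result from \cite{DodsonMurphyNonRadial,Luo_MathAnn_2024}, with a one-line remark that in the waveguide setting one runs the Euclidean interaction Morawetz machinery along $x$ and integrates the result over $y\in\T$. Your proposal is therefore a reconstruction of that scheme, and the overall architecture you describe — the $z$-averaged, Galilean-boosted Morawetz functional, the coercivity input from Lemma \ref{lem coercive}, the log-dyadic averaging in $R$ to suppress annular commutator errors, the $y$-decomposition into mean plus oscillation, and the final order in which $J$, $T_0$, $R_0$ are chosen — is the right Dodson--Murphy architecture.

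The specific Morawetz action you write down, however, has a genuine error. Your weight vector is $\chi_R(x_a-z)\nabla_x\chi_R(x_a-z)=\tfrac12\nabla_x\bigl(\chi_R^2\bigr)(x_a-z)$, which is supported on the annulus $\{|x_a-z|\sim R\}$ and vanishes identically in the bulk $\{|x_a-z|\ll R\}$. When you differentiate $\mathcal{M}_R$ in time and integrate by parts in $x_a$, the momentum-flux contribution is governed by the Hessian $\tfrac12\partial_j\partial_k(\chi_R^2)$, which is again annulus-supported and sign-indefinite; it cannot reassemble into $\mathcal{K}(\chi_R(\cdot-z)u^\xi)$ as you claim, so the coercivity \eqref{5.2} never becomes available and the main term in your schematic splitting of $\tfrac{d}{dt}\mathcal{M}_R$ is simply absent. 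What is needed instead is a weight $\nabla_x\phi_R(x_a-z)$ with $\phi_R$ a smooth convex truncation of $\tfrac12|x|^2$ (precisely of the type $\phi_\varrho$ in \eqref{phi-R-rad}), whose Hessian is the identity in the bulk. It is that bulk identity block which, after completing the square with $\xi$ and combining with the cross-terms involving $|\nabla_x\chi_R|^2|u|^2$, reproduces $|\nabla_x(\chi_R(\cdot-z)u^\xi)|^2$; the annular pieces of $\nabla^2\phi_R$ are the genuine error terms that the $dR/R$-average is designed to kill. With the weight you chose, everything lands in the error bucket and there is no main term to estimate from below.
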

\subsection{Conclusion}
In this final subsection, we explain briefly how we are able to prove Theorem \ref{thm-scattering} in the case $d\geq 5$ by using the lemmas stated in previous subsections.

Notice first that by the scattering criterion (Lemma \ref{scattering criterion}) we will need to prove
$$\|u\|_{L_t^{\ba} L_x^{\br} H_y^s(T-\vare^{-\sigma},T)} \lesssim
\vare^\mu$$
for some $\mu>0$, where $(\ba,\br)=(\ba_q,\br_q)$ is the exotic-admissible pair in Lemma \ref{indices} corresponding to the exponent $q$. To deduce this, we firstly apply the pigeonhole principle to reduce the (both spatially and temporally) averaged inequality \eqref{5.12} to the localized form
\begin{align}
&\,\int_{t_0-\vare^{-\sigma}}^{t_0}\sum_{w\in\Z^d}
\|\chi_{R_1}(\cdot-\frac{R_1}{4}(w+\theta_0))u(t)\|_{L_{x,y}^2}^2\|\nabla_x(\chi_{R_1}(\cdot-\frac{R_1}{4}(w+\theta_0))u^\xi(t))\|_{L_{x,y}^2}^2 dt\lesssim \vare^{1-\sigma},
\end{align}
where $\theta_0:\Z^d\to [0,1]^d$ is a suitable function whose existence is guaranteed by the mean value theorem. Using the modified Gagliardo-Nirenberg inequality on $\R^d$ (see \cite[Lem. 2.1]{DodsonMurphyNonRadial})
\begin{align}
\|u\|^2_{L_x^{\frac{2d}{d-1}}}\lesssim\|u\|_{L_x^{2}}\|\nabla_x u^\xi\|_{L_x^{2}}\label{6.5}
\end{align}
and H\"older and Minkowski inequalities we obtain
\begin{align}
\int_{t_0-\vare^{-\sigma}}^{t_0}\sum_{w\in\Z^d}\|\chi_{R_1}(\cdot-\frac{R_1}{4}(w+\theta_0))u(t)\|^4_{L_x^{\frac{2d}{d-1}}L_y^2}\,dt
\lesssim \vare^{1-\sigma}.\label{6.7}
\end{align}
Another application of the H\"older, Cauchy-Schwarz inequalities and the Sobolev embedding $H_x^1\hookrightarrow L_x^{\frac{2d}{d-2}}$ yields
\begin{align*}
\sum_{w\in\Z^d}\|\chi_{R_1}(\cdot-\frac{R_1}{4}(w+\theta_0))u(t)\|^2_{L_x^{\frac{2d}{d-1}}L_y^2}
\lesssim\|u(t)\|_{H_{x,y}^1}^2+O(R_1^{-2}\eta^{-2})\|u(t)\|_{L_x^2}^2\lesssim 1
\end{align*}
by choosing $R_1\gg 1$, hence
\begin{align}
\int_{t_0-\vare^{-\sigma}}^{t_0}\sum_{w\in\Z^d}\|\chi_{R_1}(\cdot-\frac{R_1}{4}(w+\theta_0))u(t)\|^2_{L_x^{\frac{2d}{d-1}}L_y^2}\,dt
\lesssim \vare^{-\sigma}.\label{6.8}
\end{align}
Interpolating \eqref{6.7} and \eqref{6.8} we obtain
\begin{equation}\label{a10}
\begin{aligned}
\|u\|^{\frac{2d}{d-1}}_{L_{t,x}^{\frac{2d}{d-1}}L_y^2(t_0-\vare^{-\sigma},t_0)}&
\lesssim
\bg(\int_{t_0-\vare^{-\sigma}}^{t_0}\sum_{w\in\Z^d}
\|\chi_{R_1}(\cdot-\frac{R_1}{4}(w+\theta_0))u(t)\|^{4}_{L_x^{\frac{2d}{d-1}}L_y^2}\,dt\bg)^{\frac{1}{d-1}}
\\
&\quad \times\bg(\int_{t_0-\vare^{-\sigma}}^{t_0}\sum_{w\in\Z^d}
\|\chi_{R_1}(\cdot-\frac{R_1}{4}(w+\theta_0))u(t)\|^{2}_{L_x^{\frac{2d}{d-1}}L_y^2}\,dt\bg)^{\frac{d-2}{d-1}}
\lesssim \vare^{\frac{1}{d-1}-\sigma}.
\end{aligned}
\end{equation}
Now the desired claim follows from \eqref{a10}, Lemma \ref{local control} (setting $|I|=t_0-(t_0-\vare^{-\sigma})=\vare^{-\sigma}$ therein) and suitable interpolation (noticing that $(\frac{2d}{d-1},\frac{2d}{d-1})$ is not yet an $L_x^2$-admissible pair, hence Lemma \ref{local control} is not directly applicable).

This essentially explains the idea for proving Theorem \ref{thm-scattering} in the case $d\geq 5$ by making use of the IMDM-estimates. For full details, we refer to \cite{Luo_MathAnn_2024}.

\section{The grow-up result: Proof of Theorem \ref{thm-growup}}\label{App-grow}

In this section we give the proof of the grow-up result Theorem \ref{thm-growup}. Consider \eqref{localized-mass} with a cut-off function $\vartheta:\mathbb R^+\mapsto[0,1]$ with $0\leq \vartheta'\leq 4$ and
\begin{align} \label{def-vartheta2}
\vartheta(|x|)= \left\{
\begin{array}{ccl}
0 &\text{if} &  |x|\leq 1, \\
1 &\text{if} & |x|\geq 2.
\end{array}
\right.
\end{align}
Consider $\vartheta_\varrho=\vartheta(|x|/\varrho)$. Recall the definition of $V_{\vartheta_\varrho}(t)$ as in \eqref{localized-mass}.  If we suppose that the solution is global and $\displaystyle \sup_{\mathbb R^+}\|\nabla_{x,y}u(t)\|_2$ is finite, then by the conservation of mass, there exists a constant $C>0$ such that
\[
V_{\vartheta_\rho}(t)=V_{\vartheta_\varrho}(0)+\int_0^tV_{\vartheta_\varrho}'(s)\,ds\leq o_\rho(1)+Ct\varrho^{-1},
\]
where  $V_{\vartheta_\varrho}(0)=o_\varrho(1)$ as $\varrho\to+\infty$ by means of the the dominated convergence theorem. Clearly
\[
\int_{\{|x|\geq\varrho\}\times\mathbb T}|u(x,y,t)|^2\,dxdy\leq V_{\vartheta_\varrho}(t),
\]
so we have that for any $\tilde \delta>0$
\begin{equation}\label{prop-speed}
\int_{\{|x|\geq\varrho\}\times\mathbb T}|u(x,y,t)|^2dxdy\leq o_\varrho(1)+\tilde\delta, \quad \text{ for } \quad  t\leq \tilde T:=C^{-1}\varrho\tilde\delta.
\end{equation}
Going back to  \eqref{localized-mass} and \eqref{virial-identity2} with the localization function $\phi_\rho$ as defined in \eqref{phi-R-rad}, and by recalling that  for a radial function
\[
\partial_{x_j} = \frac{x_j}{r} \partial_r, \quad \partial^2_{x_jx_k} = \left( \frac{\delta_{x_jx_k}}{r} - \frac{x_jx_k}{r^3} \right) \partial_r + \frac{x_j x_k}{r^2} \partial^2_r,
\]
we have
\begin{equation}\label{virial-identity-growup}
\begin{aligned}
V''_{\phi_\varrho}(t) &= 8Q(u(t))-\int \Delta_x^2 \phi_\varrho |u(t)|^2 dxdy\\
&+4\int\left(\frac{\phi_\varrho'}{r}-2\right)|\nabla_xu|^2 dxdy +4\int\left(\frac{\phi_\varrho''}{r^2}-\frac{\phi_\varrho'}{r^3}\right)|x\cdot\nabla_xu|^2 dxdy\\
&+\frac{2\mu p}{p+2} \int \left(\phi_\varrho''-(d-1)\frac{\phi_\varrho}{r}-2d\right) |u(t)|^{p+2} dxdy \\
&- \frac{2q}{q+2} \int \left(\phi_\varrho''-(d-1)\frac{\phi_\varrho}{r}-2d\right)|u(t)|^{q+2} dxdy.
\end{aligned}
\end{equation}
By the support properties of the localization function and  by interpolation it follows that
\begin{equation}\label{viri-est-rad-bis}
V''_{\phi_\varrho}(t)\leq 8Q(u(t))+C_p\|u\|^{\eta_p(p+2)}_{L^2(\{|x|\geq \varrho\}\times\mathbb T)}+C_q\|u\|^{\eta_q(q+2)}_{L^2(\{|x|\geq \varrho\}\times\mathbb T)}
\end{equation}
where $\eta_p,\eta_q\in(0,1)$. Thus, by combining  Lemma \ref{lem:Q-control}, \eqref{prop-speed}, \eqref{viri-est-rad-bis} we obtain
\[
V''_{\phi_\varrho}(t)\leq -8\delta+2\max\{C_p,C_q\}\left(  o_\varrho(1)+\tilde\delta^{\min\{\eta_p(p+2),\eta_q(q+2)\}}  \right) \quad \text{ for } \quad t\leq \tilde T.
\]
A choice $\tilde\delta\ll1$ implies that for
$\varrho$ large enough
\begin{equation}\label{v-primeprime-small-o}
V''_{\phi_\varrho}(t)\leq -4\delta<0.
\end{equation}
As $\phi\leq|x|^2$, one notes that
\begin{equation}\label{v-small-o}
\begin{aligned}
V_{\phi_\varrho}(0)&\leq\int_{\{|x|\leq\sqrt\varrho\}\times\mathbb T}|x|^2|u(x,y,0)|^2dxdy+\int_{\{\sqrt \varrho\leq |x|\leq2\varrho\}\times\mathbb T}|x|^2|u(x,y,0)|^2dxdy\\
&\leq\varrho M(u_0)+4\varrho^2 o_\varrho(1)=Co_\varrho(1)\varrho^2.
\end{aligned}
\end{equation}
Similarly,
\begin{equation}\label{v-prime-small-o}
V'_{\phi_\varrho}(0)\leq Co_\varrho(1)\varrho.
\end{equation}
It easily follows, by integrating twice in time over the interval $[0,\tilde T]$ the inequality \eqref{v-primeprime-small-o} and by using \eqref{v-small-o}, \eqref{v-prime-small-o}, along with the definition of $\tilde T\sim \varrho$,  we conclude with
\[
V_{\phi_{\varrho}}(\tilde T)=V_{\phi_{\varrho}}(0)+\tilde T V'_{\phi_{\varrho}}(0)+\int_0^{\tilde T}\int_0^sV''_{\phi_{\varrho}}(s)dsdt\leq C( o_\varrho(1)-\delta)\varrho^2\leq -\frac{C\delta}{2}\varrho^2,
\]
which is a contradiction with respect to the non-negativity of the function $V_{\phi_\varrho}(t)$.


\end{document}